\newcolumntype{C}[1]{>{\centering\arraybackslash}p{#1}}
\newcommand{\N}{{\mathbb N}}
\newcommand{\Q}{{\mathbb Q}}
\newcommand{\R}{{\mathbb R}}
\newcommand{\Z}{{\mathbb Z}}
\newcommand{\Pp}{{\mathbb P}}
\newcommand{\A}{{\mathbb A}}
\newcommand{\Cc}{{\mathbb C}}
\newcommand{\Oo}{\mathcal{O}}
\newcommand{\Ss}{\mathcal{S}}
\newcommand{\cen}{\mbox{Center}}
\newcommand{\se}[2]{\left\lbrace #1 \mbox{ }\vline\mbox{ } #2 \right\rbrace}
\newcommand{\mul}{\mbox{mult }}
\newcommand{\mmul}{\mbox{mult}}
\newcommand{\tl}[1]{\tilde{#1}}
\newcommand{\st}{^{\ast}}
\newcommand{\vc}[1]{\vcenter{\hbox{#1}}}
\newcommand{\w}{\mbox{wt}}
\newcommand{\gq}[1]{_{\geq #1}}
\newcommand{\ang}[1]{\langle #1\rangle}
\newcommand{\du}{^{\vee}}
\newcommand{\spc}{\mbox{Spec }}
\newcommand{\gd}{dep_{Gor}}
\newtheorem{thm}{Theorem}[section]
\newtheorem{pro}[thm]{Proposition}
\newtheorem{cor}[thm]{Corollary}
\newtheorem{lem}[thm]{Lemma}
\theoremstyle{definition}
\newtheorem{rk}[thm]{Remark}
\newtheorem{cov}[thm]{Convention}
\newtheorem*{defn}{Definition}
\newtheorem*{cl}{Claim}
\begin{document}
\title{On the factorization of three-dimensional terminal flops}
%\date{ }
\author{Hsin-Ku Chen}
\address{School of Mathematics, Korea Institute for Advanced Study, 85 Hoegiro, Dongdaemun-gu, Seoul 02455, Republic of Korea} 
\email{hkchen@kias.re.kr}

\begin{abstract}
	We factorize three-dimensional terminal flops into a composition of divisorial contractions to points and blowing-up smooth curves.
\end{abstract}
\maketitle
\section{Introduction}
Flops are one of the typical birational maps which occurs naturally in the minimal model program. It is proved by Koll\'{a}r \cite{k} (for
three-dimensional case) and Kawamata \cite{ka} (in general) that minimal models of terminal varieties are connected by flops. As an consequence,
we know that Calabi-Yau threefolds which belong to the same birational class can be connected by flops.
On the other hands, J. A. Chen and Hacon proved that every step of three-dimensional terminal MMP can be factorize into a combination
of (inverses of) divisorial contractions to points, blowing-up smooth curves and flops, and the former two kinds of birational maps are
well-understood nowadays. Thus understanding flops becomes an important issue in three-dimensional birational geometry.\par
The simplest flops were constructed by Atiyah and Reid \cite[Part II]{re}. The construction is as follows: they consider a smooth threefold
$X$ which contains a $K$-trivial smooth rational curve $C$ with normal bundle $\Oo(-1)\oplus\Oo(-1)$ or $\Oo\oplus\Oo(-2)$ (the threefold can be
taken as a small resolution of a deformation of an $A$-type Du Val singularity). After suitably blowing-up and blowing-down smooth rational
curves, one can get another smooth threefold $X'$, such that $X-C$ and $X'-C'$ are isomorphic for some $K$-trivial smooth rational curve $C'$
on $X'$. These flops are known as Atiyah/pagoda flops.\par
In the late twentieth century rich theories about three-dimensional flops were developed. Pinkham \cite{p} and Katz-Morrison \cite{kam}
classified three-dimensional simple smooth flops (that is, smooth flops with only one flopping curve). Koll\'{a}r \cite{k} gives an explicit
local description of three-dimensional terminal flops. Nevertheless, it is still unclear that how to construct a meaningful factorization, 
as in the case of Atiyah flops or pagoda flops. In \cite{p} Pinkham described an example, which is a factorization of a simple smooth flop
with normal bundle $\Oo(1)\oplus\Oo(-3)$. But he had not developed a general theory, and his technique (computing the normal bundle sequence)
can not be applied to non-smooth flops.\par
In this paper we construct a factorization for three-dimensional terminal flops using the minimal model program. We will prove the following.
\begin{thm}\label{mthm}
	Let $X\dashrightarrow X'$ be a three-dimensional terminal $\Q$-factorial flop. Then $X\dashrightarrow X'$ can be factorize into a
	composition of divisorial contractions to points, blowing-up smooth curves, and inverses of the above maps.
\end{thm}

Combining \cite[Theorem 1.1]{ch} and \cite[Theorem 1.2]{c2}, we know that
\begin{cor}
	Each step of MMP beginning with terminal threefolds can be factorized into a composition of divisorial contractions
	to points, blowing-up smooth curves, and inverses of the above maps.
\end{cor}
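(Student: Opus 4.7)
The plan is to exploit Koll\'{a}r's local analytic classification of three-dimensional terminal flops, which realizes $X$ near a flopping curve $C$ as a small resolution of a certain cDV singularity on a base $Y$, and pins down exactly which terminal points of $X$ can sit on $C$. I would first reduce, by analyzing the flopping locus component by component, to the case where $C$ is a single smooth rational curve with $K\cdot C=0$ and $X$ has at most finitely many terminal points $p_1,\dots,p_n$ on $C$. The target $X'$ arises symmetrically as the other small resolution of $Y$ and carries an analogous curve $C'$ with its own set of terminal points.

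Next I would move the singularities off the flopping curve. At each terminal point $p_i\in C$, use the Hayakawa/Kawakita/Mori classification to find a divisorial extraction with a prime exceptional divisor over $p_i$; performing such extractions successively replaces $X$ by $\tl{X}$ via a composition of inverses of divisorial contractions to points. If the strict transform of $C$ still meets new singularities created by the extraction, iterate. The key is to pick the extractions so that a suitable numerical invariant (for instance, the sum of the indices of terminal points lying on the strict transform of $C$, or a Reid-style age count) strictly decreases at each step, guaranteeing termination; eventually $\tl{C}\subset\tl{X}$ is a smooth rational curve contained in the smooth locus.

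Having landed in that situation, I would blow up $\tl{C}$, an allowed operation, to obtain $W\to\tl{X}$. By construction the birational map $W\to X$ is a composition of the permitted kinds of maps. To reach $X'$, apply the same procedure on the other side of the flop: the symmetric chain of extractions followed by the blow-up of $\tl{C}'$ yields some $W'\to X'$, again via permitted operations. The remaining task is to identify $W$ with $W'$, or to connect them by a further chain of allowed operations; the natural way is to run an MMP on $W$ over $Y$ with respect to a carefully chosen boundary, arranged so that every contraction one meets is either divisorial to a point or divisorial to a smooth curve in the smooth locus, and reverses to a permitted map.

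The main obstacle is precisely this last step: one must \emph{choose} both the sequence of extractions and the MMP boundary so as to avoid any intermediate flop (otherwise the argument is circular), and simultaneously force the two constructions on the two sides to produce a common model $W=W'$. This is delicate because Koll\'{a}r's local invariants allow a range of flop types beyond Atiyah and pagoda, and for the longer, higher-index cases one expects to have to iterate the extract-then-blow-up procedure several times, with a nontrivial bookkeeping argument showing that an overall complexity (e.g.\ the length of the flop, or the difficulty of the cDV singularity on $Y$) strictly drops at each stage so that the process terminates with an isomorphism onto $X'$.
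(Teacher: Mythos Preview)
There is a genuine gap, and it stems from a misreading of what the corollary asserts. The statement concerns \emph{each step of the MMP}---that is, divisorial contractions and flips---not flops. Your entire proposal addresses only the factorization of terminal flops, which is Theorem~\ref{mthm} of the paper, not this corollary. The paper's proof of the corollary is a one-line deduction: Chen--Hacon \cite[Theorem~1.1]{ch} already shows that every step of three-dimensional terminal MMP factors into divisorial contractions to points, blow-ups of LCI curves, and flops, and \cite[Theorem~1.2]{c2} refines the curve blow-ups to smooth curves in smooth loci; the only remaining ingredient is the factorization of flops, which is Theorem~\ref{mthm}, taken as given. You never invoke these external reductions, so your argument does not touch flips or non-Gorenstein divisorial contractions to curves at all.

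Even viewed as an independent attempt at Theorem~\ref{mthm}, your sketch has the gap you yourself flag: you do not explain how to choose the extractions and the MMP boundary so as to avoid reintroducing flops (which would make the argument circular) and to force the two towers over $X$ and $X'$ to meet at a common model. The paper's actual proof of Theorem~\ref{mthm} proceeds differently: it first reduces an arbitrary terminal flop to simple smooth flops (Theorem~\ref{fthm}) via an induction on a carefully constructed invariant---the Gorenstein-elephant's height, combined with general-elephant type and generalized depth---and then treats each of the finitely many types of simple smooth flops by explicit case-by-case computation (Theorem~\ref{ssthm}). Your proposed invariant ``sum of indices of terminal points on the strict transform of $C$'' is not shown to decrease, and the paper needs considerably more delicate bookkeeping (Sections~\ref{sele}--\ref{sce}) to make the induction terminate.
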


To prove Theorem \ref{mthm}, we first factorize three-dimensional terminal flops into a combination of (inverses of) divisorial contractions
and simple smooth flops.

\begin{thm}\label{fthm}
	Let $X\dashrightarrow X'$ be three-dimensional terminal $\Q$-factorial flop. Then $X\dashrightarrow X'$ can be factorize into a composition of
	divisorial contractions to points, blowing-up smooth curves, simple smooth flops, and inverses of the above maps.
\end{thm}

The basic idea of the proof of Theorem \ref{fthm} is the follows: assume that $X\dashrightarrow X'$ is a flop over $W$. We first construct
a divisorial contraction $W_1\rightarrow W$ such that $W_1$ has better singularities than $W$. Let $Y$ be a $\Q$-factorizations of $W_1$
and one can run the $K_Y$-MMP over $W$. The minimal model is a $\Q$-factorization of $W$, and it may be $X$ or $X'$. Assume that the minimal
model of $Y$ over $W$ is $X$, then the birational map $Y\dashrightarrow X$ is a sequence of flips followed by a divisorial contraction.
By the result of J. A. Chen and Hacon \cite{ch} we can factorize this birational map into (inverses of) divisorial contractions and flops.\par
On the other hand, one can show that there exists $Y'$ such that either $Y\dashrightarrow Y'$ is either a flop or an isomorphism, and $X'$ is a
minimal model of $Y'$ over $W$. One can also factorize $Y'\dashrightarrow X'$ into (inverses of) divisorial contractions and flops. In this way
we get a factorization of $X\dashrightarrow X'$. If $X\dashrightarrow X'$ is not a simple smooth flop, then one can show that every flop appear
in the factorization has better singularities compared with $X\dashrightarrow X'$. Thus we can factorize $X\dashrightarrow X'$ into a
composition of (inverses of) divisorial contractions and simple smooth flops by induction on the singularity.\par
Now assume that $X\dashrightarrow X'$ is a simple smooth flop over $W$. One can first verify that the singularities of $W$ have only finitely
many possibilities. We can construct the factorization of $X\dashrightarrow X'$ as before, and every step of the factorization can be written
down explicitly. By direct computation, one can figure out that every flop appear in the factorization is better than $X\dashrightarrow X'$
(in fact, most of them are Atiyah flops). One can prove the following theorem. Here a $w$-morphism is a divisorial contraction to a point with
minimal discrepancy. Please see Section \ref{ster} for the precise definition.\par
\begin{thm}\label{ssthm}
	Let $X\dashrightarrow X'$ be a three-dimensional simple smooth flop over $W$. Then $X\dashrightarrow X'$ can be factorize into a
	composition of $w$-morphisms, blowing-up smooth curves contained in smooth loci, and inverses of the above maps.\par
	Moreover,
	\begin{enumerate}[(1)]
	\item \cite{re} Assume that $W$ has $cA$ singularities, then $X\dashrightarrow X'$ has a factorization of type $(A^{(k)})$
		for some $k\geq 1$.
	\item Assume that $W$ has $cD$ singularities, then $X\dashrightarrow X'$ has a factorization of type $(D^{(k)})$
		for some $k\geq 0$.
	\item Assume that $W$ has $cE_n$ singularities, then $X\dashrightarrow X'$ has a factorization of type $(E_n)$.
	\end{enumerate}
	In the following notation, the label $w$ means a $w$-morphism, and the label $c$ means a blowing-up a smooth curve.
	\[ (A^{(0)}): X\cong X';\quad (A^{(1)}): \vc{\xymatrix{ & Y\ar[rd]^c \ar[ld]_c & \\ X & & X' }};\quad
		(A^{(k)}):\vc{\xymatrix@C=2cm{ Y\ar[d]_c \ar@{-->}[r]^{(A^{(k-1)})} & Y'\ar[d]^c \\ X & X' } }.\]
	\[ (D^{(0)}):\vc{\xymatrix@C=2cm{ Y_{(0,1)}\ar[d]_c & Y_{(0,0)} \ar@{-->}[l]_{(A^{(1)})} \ar[d]^w &
		Y'_{(0,0)} \ar@{-->}[r]^{(A^{(1)})} \ar[d]_w & Y'_{(0,1)}\ar[d]^c \\
		Y_{(1)}\ar[d]_c & Y_{(0)} \ar@{-->}[r]^{(A)} & Y'_{(0)} & Y'_{(1)} \ar[d]^c \\ X & & & X' }};\]
	\[(D^{(k)}):\vc{\xymatrix@C=2cm{ Y_{(0,1)}\ar[d]_c & Y_{(0,0)} \ar@{-->}[l]_{(A^{(1)})} \ar[d]^w &
		Y'_{(0,0)} \ar@{-->}[r]^{(A^{(1)})} \ar[d]_w & Y'_{(0,1)}\ar[d]^c \\
		Y_{(1)}\ar[d]_c & Y_{(0)} \ar@{-->}[r]^{(D^{(k-1)})} & Y'_{(0)} & Y'_{(1)} \ar[d]^c \\ X & & & X' }}.\]
	\[ (E_6):\vc{\xymatrix@R=1.5cm{ X & Y_1\ar[l]_c & Y_{(0,2)}\ar[l]_c & Y_{(0,1,1)} \ar[l]_c & \\
		& & Y_{(0,1)} & Y_{(0,1,0)} \ar@{-->}[u]_{(A^{(1)})}\ar[l]_w & \\
		& & Y_{(0,0)} \ar@{-->}[u]_{(A^{(1)})} & Z_{(0,2)} \ar[l]_w & Z_{(0,1,1)} \ar[l]_c \\
		& & & Z_{(0,1)} & Z_{(0,1,0)} \ar@{-->}[u]_{(A^{(1)})} \ar[l]_w \\
		& & Z \ar@{-->}[d]^{(A)\mbox{ or }(D)} & Z_{(0,0)} \ar@{-->}[u]_{(A^{(1)})} \ar[l]_w & \\
		& & Z' & Z'_{(0,0)} \ar@{-->}[d]^{(A^{(1)})}\ar[l]_w & \\
		& & & Z'_{(0,1)} & Z'_{(0,1,0)} \ar@{-->}[d]^{(A^{(1)})} \ar[l]_w \\
		& & Y'_{(0,0)} \ar@{-->}[d]^{(A^{(1)})} & Z'_{(0,2)} \ar[l]_w & Z'_{(0,1,1)} \ar[l]_c \\
		& & Y'_{(0,1)} & Y'_{(0,1,0)} \ar@{-->}[d]^{(A^{(1)})}\ar[l]_w & \\
		X' & Y'_1\ar[l]_c & Y'_{(0,2)}\ar[l]_c & Y'_{(0,1,1)} \ar[l]_c & \\ }}.\]
	\[ (E_7):\vc{\xymatrix@R=0.7cm{ X & \bar{Y}\ar[l]_c & \tl{Y}\ar[l]_c & Y_{(0,2,1)} \ar[l]_c & & \\
		& & Y_{(0,2)} & Y_{(0,2,0)} \ar[l]_w \ar@{-->}[u]_{(A^{(1)})} & & \\
		& & & Y_{(0,1,1)} \ar[lu]^c & & \\
		& & Y_{(0,1)} & Y_{(0,1,0)} \ar[l]_w \ar@{-->}[u]_{(A^{(1)})}  & & \\
		& & & \bar{Z}_{(1)} \ar[lu]^w & \bar{Z}_{(0,2)} \ar[l]_w & \bar{Z}_{(0,1,1)} \ar[l]_c \\
		& & & & \bar{Z}_{(0,1)} & \bar{Z}_{(0,1,0)} \ar[l]_w \ar@{-->}[u]_{(A^{(1)})} \\
		& & & & \bar{Z}_{(0)}=Z_{(0)}\ar@{-->}[u]_{(A^{(1)})} \ar@{-->}[d]^{(A^{(1)})} & \\
		& & & & Z_{(0,1)} & Z_{(0,1,0)} \ar[l]_w \ar@{-->}[d]^{(A^{(1)})} \\
		& Y \ar@{-->}[d]^{(D)} & Y_{(0,0)} \ar[l]_w & Z_{(1)} \ar[l]_w & Z_{(0,2)} \ar[l]_w & Z_{(0,1,1)} \ar[l]_c  \\
		& Y' & Y'_{(0,0)} \ar[l]_w & Z'_{(1)} \ar[l]_w & Z'_{(0,2)} \ar[l]_w & Z'_{(0,1,1)} \ar[l]_c  \\
		& & & & Z'_{(0,1)} & Z'_{(0,1,0)} \ar[l]_w \ar@{-->}[u]_{(A^{(1)})} \\
		& & & & Z'_{(0)}=\bar{Z'}_{(0)}\ar@{-->}[d]^{(A^{(1)})} \ar@{-->}[u]_{(A^{(1)})} & \\
		& & & & \bar{Z'}_{(0,1)} & \bar{Z'}_{(0,1,0)} \ar[l]_w \ar@{-->}[d]^{(A^{(1)})} \\
		& & & \bar{Z'}_{(1)} \ar[ld]_w & \bar{Z'}_{(0,2)} \ar[l]_w & \bar{Z'}_{(0,1,1)} \ar[l]_c \\
		& & Y'_{(0,1)} & Y'_{(0,1,0)} \ar[l]_w \ar@{-->}[d]^{(A^{(1)})}  & & \\
		& & & Y'_{(0,1,1)} \ar[ld]_c & & \\
		& & Y'_{(0,2)} & Y'_{(0,2,0)} \ar[l]_w \ar@{-->}[d]^{(A^{(1)})} & & \\
		X' & \bar{Y'}\ar[l]_c & \tl{Y'}\ar[l]_c & Y'_{(0,2,1)} \ar[l]_c & & \\ }}.\]
		
	\[ (E_8^{(1)}):\vc{\xymatrix@R=0.8cm@C=0.8cm{ X & Y_1\ar[l]_c & Y_{(0,2)} \ar[l]_c & Y_{(0,1,1)} \ar[l]_c & & & & \\
		& & Y_{(0,1)} & Y_{(0,1,0)} \ar@{-->}[u]_{(A^{(1)})} \ar[l]_w & & & & \\
		& & Y_{(0,0)} \ar@{-->}[u]_{(A^{(1)})} & Z_{(0,4)} \ar[l]_w & Z_{(0,3,1)} \ar[l]_c & & & \\
		& & & Z_{(0,3)} & Z_{(0,3,0)} \ar[l]_w \ar@{-->}[u]_{(A^{(1)})} & & & \\
		& & & & Z_{(0,2,1)} \ar[lu]^c & & & \\
		& & & & Z_{(0,2,0)} \ar@{-->}[u]_{(A^{(1)})} & Z_{(0,1,1,1)} \ar[l]_c & & \\
		& & & & Z_{(0,1,1)} & Z_{(0,1,1,0)} \ar[l]_w \ar@{-->}[u]_{(A^{(1)})} & & \\
		& & & Z_{(0,1)} & Z_{(0,1,0)} \ar[l]_w \ar@{-->}[u]_{(A^{(1)})} & & & \\
		& & & Z_{(0,0)} \ar@{-->}[u]_{(A^{(1)})} & \bar{Z}_{(0,2)} \ar[l]_w & \bar{Z}_{(0,1,2)} \ar[l]_w & \bar{Z}_{(0,1,1,1)} \ar[l]_c & \\
		& & & & & \bar{Z}_{(0,1,1)} & \bar{Z}_{(0,1,1,0)} \ar[l]_w \ar@{-->}[u]_{(A^{(1)})} & \\
		& & & & \bar{Z}_{(0,1)} & \bar{Z}_{(0,1,0)} \ar[l]_w\ar@{-->}[u]_{(A^{(1)})} & & \\
		& & & & \bar{Z}_{(0,0)} \ar@{-->}[u]_{(A^{(1)})} & \tl{Z}_{(0,2)} \ar[l]_w & \tl{Z}_{(0,1,2)} \ar[l]_w & \tl{Z}_{(0,1,1,1)} \ar[l]_c \\
		& & & & & & \tl{Z}_{(0,1,1)} & \tl{Z}_{(0,1,1,0)} \ar[l]_w \ar@{-->}[u]_{(A^{(1)})} \\
		& & & & & \tl{Z}_{(0,1)} & \tl{Z}_{(0,1,0)} \ar[l]_w \ar@{-->}[u]_{(A^{(1)})} & \\
		& & & & \tl{Z}_{(0)} \ar@{-->}[d]_{(A)\mbox{ or }(D)} & \tl{Z}_{(0,0)} \ar[l]_w \ar@{-->}[u]_{(A^{(1)})} & & \\
		& & & & \tl{Z'}_{(0)} & & & } }\] 
		and the diagram from $\tl{Z'}_{(0)}$ to $X'$ is symmetric.
	\[ (E_8^{(2)}):\vc{\xymatrix@R=0.8cm@C=0.8cm{ X & Y_1\ar[l]_c & Y_{(0,2)} \ar[l]_c & Y_{(0,1,2)} \ar[l]_c & Y_{(0,1,1,1)} \ar[l]_c & & & \\
		& & & Y_{(0,1,1)} & Y_{(0,1,1,0)} \ar@{-->}[u]_{(A^{(1)})} \ar[l]_w & & & \\
		& & Y_{(0,1)} & Y_{(0,1,0)} \ar@{-->}[u]_{(A^{(1)})} \ar[l]_w & & & & \\
		& & Y_{(0,0)} \ar@{-->}[u]_{(A^{(1)})} & Z_{(0,3)} \ar[l]_w & Z_{(0,2,1)} \ar[l]_c & & & \\
		& & & Z_{(0,2)} & Z_{(0,2,0)} \ar[l]_w \ar@{-->}[u]_{(A^{(1)})} & & & \\
		& & & & Z_{(0,1,2)} \ar[lu]^w & Z_{(0,1,1,1)} \ar[l]_c & & \\
		& & & & Z_{(0,1,1)} & Z_{(0,1,1,0)} \ar[l]_w \ar@{-->}[u]_{(A^{(1)})} & & \\
		& & & Z_{(0,1)} & Z_{(0,1,0)} \ar[l]_w \ar@{-->}[u]_{(A^{(1)})} & & & \\
		& & & Z_{(0,0)} \ar@{-->}[u]_{(A^{(1)})} & \bar{Z}_{(0,2)} \ar[l]_w & \bar{Z}_{(0,1,2)} \ar[l]_w & \bar{Z}_{(0,1,1,1)} \ar[l]_c & \\
		& & & & & \bar{Z}_{(0,1,1)} & \bar{Z}_{(0,1,1,0)} \ar[l]_w \ar@{-->}[u]_{(A^{(1)})} & \\
		& & & & \bar{Z}_{(0,1)} & \bar{Z}_{(0,1,0)} \ar[l]_w\ar@{-->}[u]_{(A^{(1)})} & & \\
		& & & & \bar{Z}_{(0,0)} \ar@{-->}[u]_{(A^{(1)})} & \tl{Z}_{(0,2)} \ar[l]_w & \tl{Z}_{(0,1,2)} \ar[l]_w & \tl{Z}_{(0,1,1,1)} \ar[l]_c \\
		& & & & & & \tl{Z}_{(0,1,1)} & \tl{Z}_{(0,1,1,0)} \ar[l]_w \ar@{-->}[u]_{(A^{(1)})} \\
		& & & & & \tl{Z}_{(0,1)} & \tl{Z}_{(0,1,0)} \ar[l]_w \ar@{-->}[u]_{(A^{(1)})} & \\
		& & & & \tl{Z}_{(0)} \ar@{-->}[d]_{(A)\mbox{ or }(D)} & \tl{Z}_{(0,0)} \ar[l]_w \ar@{-->}[u]_{(A^{(1)})} & & \\
		& & & & \tl{Z'}_{(0)} & & & } }\] 
		and the diagram from $\tl{Z'}_{(0)}$ to $X'$ is symmetric.
\end{thm}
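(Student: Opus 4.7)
The plan is to handle each singularity type of $W$ separately, following the explicit construction strategy indicated in the introduction. Since $X \dashrightarrow X'$ is a \emph{simple smooth} flop, Koll\'ar's explicit classification of three-dimensional terminal flops produces a finite list of local equations for $W$ near the unique flopping curve, organized into the families $cA$, $cD$, $cE_{6}$, $cE_{7}$, and $cE_{8}$. I would first enumerate these possibilities and, in each case, read off the admissible $w$-morphisms $W_{1} \to W$ and the resulting $\Q$-factorizations.

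The $cA$ case is the classical pagoda factorization of Reid, and item (1) requires only restating this construction in the notation of the theorem, observing that every intermediate variety is smooth and every arrow is a blow-up along a smooth curve. For the $cD$ and $cE$ cases I would execute the MMP template from the proof of Theorem \ref{fthm}: choose a $w$-morphism $W_{1} \to W$ with minimal discrepancy, take a small $\Q$-factorization $Y \to W_{1}$, and run the $K_{Y}$-MMP over $W$ to land on $X$; then repeat symmetrically for $X'$. The output is a cascade of divisorial contractions and intermediate flops, each of which I would identify explicitly from the local equations. The key observation is that these intermediate flops are always of strictly simpler type than $X \dashrightarrow X'$, so an induction on the singularity class organizes everything.

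For the $cD$ case the factorization $(D^{(k)})$ arises by induction on $k$: after $w$-morphisms on both sides we arrive at $Y_{(0)}$ and $Y'_{(0)}$, and the central flop $Y_{(0)} \dashrightarrow Y'_{(0)}$ is either an Atiyah/pagoda flop (the base case $(D^{(0)})$) or a $cD$ flop of smaller index to which the inductive hypothesis applies, giving $(D^{(k-1)})$. The $cE_{6}$, $cE_{7}$, and $cE_{8}$ cases follow the same pattern, but the cascade of $w$-morphisms is longer and culminates in a central flop labelled $(A)$ or $(D)$, for which the earlier cases supply the factorization. In $cE_{8}$ the central flop distinguishes the two sub-cases $(E_{8}^{(1)})$ and $(E_{8}^{(2)})$, which I would identify by examining which branch of the classification the local equation of $W$ falls into.

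The main obstacle is the sheer amount of explicit local computation in the $cE$ cases. For every intermediate variety appearing in the diagrams I must verify that the indicated arrow exists and has the asserted type, which amounts to writing down the terminal singularity in weighted coordinates, identifying the correct weighted blow-up as a $w$-morphism or the correct smooth curve to blow up, and checking the resulting singularity of the target. The length of the $(E_{8})$ diagrams and the bifurcation into $(E_{8}^{(1)})$ versus $(E_{8}^{(2)})$ will require the most careful bookkeeping, but no conceptual novelty beyond the strategy outlined above.
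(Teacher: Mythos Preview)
Your high-level strategy coincides with the paper's: take the unique $w$-morphism $W_1\to W$ (uniqueness forced by Lemma~\ref{ssflop}, since a simple smooth flop implies exactly one discrepancy-one divisor over $W$), take a $\Q$-factorization $Y$, run MMP to $X$ and symmetrically to $X'$, and induct on singularity type. This is exactly what the paper does, and Proposition~\ref{cd}, Proposition~\ref{ce6}, Proposition~\ref{ce7}, Proposition~\ref{e8flop} assemble the pieces just as you describe.

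However, you underestimate one structural component. You say the intermediate flops ``are always of strictly simpler type'' and treat this as something to be read off from local equations. In the paper this is not ad hoc: Section~\ref{sflip} first develops a small toolkit (Lemmas~\ref{pago}, \ref{3flip}, \ref{lflip}, \ref{flopness}, \ref{ati}, \ref{ati1}, \ref{ati2}) that completely classifies the Chen--Hacon factorization of any flip whose source has depth at most~$2$, and more importantly gives criteria forcing the first-level flop $Y_{(I,0)}\dashrightarrow Y_{(I,1)}$ to be an \emph{Atiyah} flop, not merely a smooth or $A$-type flop. These lemmas are then invoked repeatedly and uniformly throughout the $cD$ and $cE$ arguments; without them you would have no mechanism to certify the $(A^{(1)})$ labels populating the diagrams, and the bookkeeping in the $cE$ cases would become unmanageable. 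Your outline should build this toolkit first rather than hope to extract each Atiyah-flop verification from local coordinates.

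One minor correction: the bifurcation $(E_8^{(1)})$ versus $(E_8^{(2)})$ is not governed directly by which family of equations $W$ belongs to, but by whether the $cA/2$ point on the $\Q$-factorization $Y$ of $W_1$ has $dep(Y)=2$ or $dep(Y)=3$ (Lemma~\ref{ce2} and Proposition~\ref{e8flop}). This in turn traces back to the equation of $W$, but the invariant you should track is the depth of $Y$, not a case split on $W$ itself.
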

We only need to prove Theorem \ref{fthm} and Theorem \ref{ssthm}.
In Section \ref{spre} we will recall some known result about three-dimensional simple smooth flops as well as singularities and factorizations
of terminal threefolds. The proof of Theorem \ref{fthm} is given in Section \ref{sff}. In Section \ref{sflip} we describe the factorization
of flips with very simple singularities, which is useful in the rest of the article. We construct the factorization of simple smooth flop over
$cD$ points as well as smooth flops over $cA_2$ points in Section \ref{scd}, and construct the factorization of simple smooth flops over $cE$
points in Section \ref{sce}. This finishes the proof of Theorem \ref{ssthm}.\par
I want to thank Jungkai Alfred Chen and Paolo Cascini for their helpful comments. Part of this work was done while the author was visiting
the University of Cambridge DPMMS. The author want to thank the University of Cambridge DPMMS for its hospitality.
\section{Preliminaries}\label{spre}
\subsection{Conventions}
In this paper, every variety is assumed to be terminal threefolds which are projective and over the complex number.\par
A flop is diagram of small birational morphisms \[ \vc{\xymatrix{ X\ar[rd]_{f} & & X'\ar[ld]^{f'} \\ & W & }}\] such that $W$ is
$\Q$-Gorenstein, and $\rho(X/W)=\rho(X'/W)=1$. Notice that we have $K_X=f\st K_W$ and $K_{X'}={f'}\st K_W$.\par
Let $X\dashrightarrow X'$ be a three-dimensional flop having terminal singularities. By \cite{k} we know that $X$ and $X'$ have the same
singularities and $exc(X/W)\cong exc(X'/W)$. We say that $X\dashrightarrow X'$ is \emph{simple} if $exc(X/W)$ is irreducible. If $X$ is smooth (resp. Gorenstein, non-Gorenstein), then we say that $X\dashrightarrow X'$ is a smooth (resp. Gorenstein, non-Gorenstein) flop.
If $X\dashrightarrow X'$ is Gorenstein flop, we say that the flop is of type $A$, $D$ or $E$ if $W$ has $cA$, $cD$ or $cE$ singularities,
respectively.\par
A divisorial contraction is a birational morphism $Y\rightarrow X$ such that the exceptional locus is irreducible and $K_Y$ is anti-ample
over $X$.\par
Assume that $\phi:Z\dashrightarrow Y$ is a birational map and $H$ is a divisor on $Z$. If $\phi$ is an isomorphism on the
generic point of $H$, then we denote the proper transform of $H$ on $Y$ by $H_Y$.
\subsection{Simple smooth flops}
We briefly introduce the classification of three-dimensional simple smooth flops.
\begin{defn}
	Let $X\dashrightarrow X'$ be a three-dimensional simple smooth flop. Let $X_0=X$ and $C_0$ be the flopping curve. Define
	$X_{i+1}=Bl_{C_i}X_i$. The exceptional divisor $E_{i+1}$ of $X_{i+1}\rightarrow X_i$ has a ruled surface structure over $C_i$.
	If $E_{i+1}$ do not isomorphic to $\Pp^1\times \Pp^1$, then let $C_{i+1}$ be the unique negative section.\par
	The normal bundle sequence is a sequence $\{(a_i,b_i)\}_{i=0}^N$ such that $N_{C_i/X_i}=\Oo_{C_i}(a_i)\oplus\Oo_{C_i}(b_i)$ with
	$a_i\geq b_i$, and $a_N=b_N$.
\end{defn}
By \cite{p} and \cite{kam} there are seven types of simple smooth flops:\\
\[ \begin{tabular}{l|c|c}
	No. & Singularity of $W$ & normal bundle sequence \\\hline
	1 & $cA_1$ & $(-1,-1)$ \\
	2 & $cA_1$ & $(0,-2)$, ..., $(0,-2)$, $(-1,-1)$ \\
	3 & $cD_4$ & $(1,-3)$, $(-1,-2)$, $(-1,-1)$\\
	4 & $cE_6$ & $(1,-3)$, $(0,-3)$, $(-1,-2)$, $(-1,-1)$ \\
	5 & $cE_7$ & $(1,-3)$, $(0,-3)$, $(-1,-2)$, $(-1,-1)$ \\
	6 & $cE_8$ & $(1,-3)$, $(0,-3)$, $(-1,-2)$, $(-1,-1)$ \\
	7 & $cE_8$ & $(1,-3)$, $(0,-3)$, $(0,-3)$, $(-1,-2)$, $(-1,-1)$ \\
\end{tabular}\]
Case No.1 is called an \emph{Atiyah flop} and Case No.2 is called a \emph{pagoda flop}. Using the notation in Theorem \ref{ssthm}, it is
well-known that (cf. \cite[Part II]{re}) the Atiyah flop has a factorization of type $(A^{(1)})$ and a pagoda flop with a length $n$ normal
bundle sequence has a factorization of type $(A^{(n)})$.

\begin{lem}\label{lati}
	Let $X\dashrightarrow X'$ be an Atiyah flop. Let $C\subset X$ be the flopping curve and $C'\subset X'$ be the flopped curve. Let $H$ be a
	divisor on $X$. Then the following holds.
	\begin{enumerate}[(1)]
		\item Assume that $H$ intersects $C$ transversally at $m$ points, then $\mmul_{C'} H_{X'}=m$.
		\item Assume that $H.C=m$, then $H_{X'}.C'=-m$.
	\end{enumerate}
\end{lem}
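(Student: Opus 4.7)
The plan is to resolve the Atiyah flop by a single blowup. Let $f \colon Y = Bl_C X \to X$; since $N_{C/X} = \Oo(-1)^{\oplus 2}$, the exceptional divisor $E$ is isomorphic to $\Pp^1 \times \Pp^1$, and the second ruling of $E$ contracts to give $g \colon Y \to X'$, which by symmetry is itself the blowup of the flopped curve $C' \subset X'$. Write $f_1$ for a fiber of $f|_E \colon E \to C$ and $f_2$ for a fiber of $g|_E \colon E \to C'$. The only numerical facts I need on $Y$ are $(f_1 \cdot f_2)_E = 1$ and $E \cdot f_1 = E \cdot f_2 = -1$, both immediate from the blowup description. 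Because the open immersions $Y \setminus E \hookrightarrow X \setminus C$ and $Y \setminus E \hookrightarrow X' \setminus C'$ identify the birational map $X \dashrightarrow X'$ with the identity there, the strict transforms of $H$ via $f$ and of $H_{X'}$ via $g$ coincide as subvarieties of $Y$, and I will denote the common strict transform by $H_Y$. Hence
\[
g^* H_{X'} = H_Y + a\, E, \qquad a = \mul_{C'}(H_{X'}).
\]

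For (1), the transversality hypothesis implies $H$ does not contain $C$, so $H_Y = f^* H$. A local coordinate computation in the blowup shows that $H_Y \cap E$ is the disjoint union of the $m$ fibers of $f|_E$ over the intersection points $H \cap C$, so as a $1$-cycle on $E$ one has $H_Y|_E \sim m f_1$, and therefore $H_Y \cdot f_2 = m$. Intersecting the displayed formula with $f_2$, which $g$ contracts to a point, gives $0 = m + a(-1)$, i.e.\ $a = m$, which is (1).

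For (2), I drop the transversality hypothesis and write $H_Y = f^* H - k E$ with $k = \mul_C(H) \ge 0$. Intersecting with $f_2$ (noting $f|_{f_2}\colon f_2 \to C$ is an isomorphism) yields $H_Y \cdot f_2 = H \cdot C + k = m + k$, so $a = m + k$ from the same formula. Since $g$ maps $f_1$ isomorphically onto $C'$, intersecting with $f_1$ gives
\[
H_{X'} \cdot C' \;=\; g^* H_{X'} \cdot f_1 \;=\; H_Y \cdot f_1 + a\, E \cdot f_1 \;=\; k - (m+k) \;=\; -m,
\]
which is (2).

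The computations are elementary and there is no serious obstacle; the only points requiring care are the signs $E \cdot f_i = -1$ (coming from $\Oo_E(E) = \Oo_{\Pp(N^*)}(-1)$) and the identification of the two strict transforms of $H$ and $H_{X'}$ in $Y$.
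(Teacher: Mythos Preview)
Your proof is correct and, for part (1), essentially identical to the paper's: both work on the common resolution $Y=Bl_CX=Bl_{C'}X'$, observe that $H_Y\cap E$ consists of $m$ fibers of the projection $E\to C$, and read off $\mul_{C'}H_{X'}=m$.

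For part (2) you take a slightly different route. The paper first proves the transversal case (where $k=\mul_C H=0$, so $H_Y=f^*H$) exactly as you do, and then handles the general case by a linearity trick: writing $H=A-B$ with $A,B$ ample and transversal to $C$ (up to a multiple), so that the transversal case applies to each summand. You instead keep $k=\mul_C H$ explicit throughout, compute $a=\mul_{C'}H_{X'}=m+k$ directly, and then obtain $H_{X'}\cdot C'=k-(m+k)=-m$ in one line. Your approach is more direct and avoids the ampleness detour; the paper's approach has the mild advantage of deriving (2) entirely from (1) plus linearity, so (1) does all the geometric work. Both are short and elementary.
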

\begin{proof}
	Let $\phi:Y=Bl_CX\rightarrow X$ and $\phi':Y=Bl_{C'}X'\rightarrow X'$ be the common resolution and let $E=exc(\phi)=exc(\phi')$.
	Then $H_Y$ intersects $E$ transversally at $m$ curves $l_1$, ..., $l_m$, such that $l_i$ maps bijectively to $C'$.
	Hence $\mmul_{C'} H_{X'}=m$.\par
	Now we prove (2). First assume that $m>0$ and $H$ intersects $C$ transversally at $m$ points. Let $l$ be a curve on $Y$ which maps
	bijectively to $C'$. Then we have \[H_{X'}.C'=H_Y.l+(\mmul_{C'} H_{X'}) E.l=(\phi\st H).l+(\mmul_{C'} H_{X'}) E.l=0-m=-m.\]
	In general we can write $H=A-B$ where $A$ and $B$ are ample divisors such that $B$ and some multiple of $A$ intersects $C$ transversally.
	One can easily see that $H$ has the desired property.
\end{proof}
We have the following observation for the underlying space of simple smooth flops.
\begin{lem}\label{ssflop}
	Assume that $(P\in W)$ is a germ of Gorenstein terminal threefold. Then there exists a simple smooth flop over $W$ if and only if
	$W$ is not $\Q$-factorial and there are exactly one exceptional divisor which has discrepancy one over $P$.
\end{lem}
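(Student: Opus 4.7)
My plan is to treat the two directions separately. The forward direction will reduce to a discrepancy calculation on the smooth total space; the backward direction requires constructing a simple smooth flopping contraction from the hypothesized numerical data, and will contain the real work.

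For the forward implication, suppose $f\colon X\to W$ is a simple smooth flopping contraction with flopping curve $C$. That $W$ is not $\Q$-factorial will follow from the standard fact that any small proper birational morphism from a $\Q$-factorial variety to a $\Q$-factorial base must be an isomorphism: pulling back any $f$-ample divisor $A$ and using $\Q$-factoriality of $W$ to write $A=f\st f\ts A$ forces $A$ to intersect $C$ trivially, contradicting $f$-ampleness. For the uniqueness of the discrepancy-$1$ divisor, I use that $f$ is crepant to obtain $a(F,W)=a(F,X)$ for every divisor $F$ over $W$; then $a(F,W)=1$ implies that the center of $F$ on $X$ is contained in $exc(f)=C$. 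On the smooth threefold $X$ the exceptional divisor of $Bl_CX$ is the unique discrepancy-$1$ divisor with center $C$, while any divisor with a $0$-dimensional center on smooth $X$ has discrepancy at least $2$; hence exactly one such divisor exists.

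For the backward direction, assume $W$ is non-$\Q$-factorial with a unique discrepancy-$1$ divisor $E$ over $P$. I first take a small $\Q$-factorialization $\pi\colon Y\to W$, which exists by standard MMP and is nontrivial since $W$ is not $\Q$-factorial. Then $Y$ is Gorenstein terminal, $\pi$ is crepant, and $exc(\pi)$ is a union of smooth rational curves over $P$. The plan is to show that $\pi$ is itself a simple smooth flopping contraction, whose flop will then give the desired simple smooth flop over $W$. The uniqueness of $E$ enters at two places. If $\rho(Y/W)>1$, then $\overline{NE}(Y/W)$ has at least two extremal rays, so $Y$ has at least two distinct flopping curves, and each of them (via blowup or an appropriate divisorial extraction) contributes a distinct discrepancy-$1$ divisor over $W$, contradicting uniqueness. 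And if $Y$ is singular at some point $q$, then $q$ is a cDV point embedded as a multiplicity-$2$ hypersurface in a smooth fourfold, so the ordinary blowup of $q$ extracts a prime exceptional divisor of discrepancy $3-2=1$ (by adjunction) with center $q$, again distinct from $E$ and contradicting uniqueness. Hence $Y$ is smooth with $\rho(Y/W)=1$, and $\pi$ is a simple smooth flopping contraction admitting a flop.

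The hard part will be executing the discrepancy-counting argument in the presence of cDV singularities on the flopping locus: I need to verify that each such singularity and each extra extremal ray genuinely contributes a new discrepancy-$1$ divisor distinct from $E$, rather than duplicating it. The key input will be the classification of divisorial extractions of terminal Gorenstein threefold singularities (Kawakita, Hayakawa), which provides explicit models for the extracting contractions and makes the counting of centers tractable.
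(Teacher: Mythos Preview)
Your approach matches the paper's: both directions use the crepant identification $a(F,W)=a(F,X)$, and the converse proceeds by showing that either multiple exceptional curves or a singular point on the small resolution forces a second discrepancy-one divisor. The paper phrases the converse via a flopping contraction rather than a $\Q$-factorialization and simply cites \cite{c} for the existence of a discrepancy-one divisor over any Gorenstein terminal singular point.

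One step as written fails. The ordinary blowup of a cDV point does not always extract a prime divisor of discrepancy $3-2=1$: for $cD$ and $cE$ types the degree-two part of the local equation is a square $x^2$, so the Cartier exceptional divisor on the blowup is $2D$ for a single prime $D\cong\Pp^2$, and adjunction gives $K_{\tl X}=\pi^\ast K_X+2D$, i.e.\ $a(D,X)=2$, not $1$. (For $cA$ the tangent cone is a rank-$\geq 2$ quadric and your computation goes through, possibly with two components each of discrepancy one.) The fix is exactly what the paper does---invoke the existence of a $w$-morphism over any Gorenstein terminal singular point from \cite{c}---or equivalently the Kawakita--Hayakawa classification you mention in your last paragraph; but the ordinary-blowup shortcut is incorrect for $cD$ and $cE$.
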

\begin{proof}
	Assume that $X\rightarrow W$ is a flopping contraction such that $X$ is smooth and $exc(X/W)$ is irreducible. Then there is only one
	exceptional divisor $E$ over $W$ with $a(E,X)=1$, namely the divisor obtained by blowing-up the flopping curve on $X$.\par
	Conversely, if $X\rightarrow W$ is a flopping contraction with $n$ flopping curves for $n>1$, then blowing-up those curves produces $n$
	different exceptional divisors which have discrepancy one over $W$. Likewise, if $X\rightarrow W$ is a flopping contraction such that
	$X$ has a Gorenstein singular point $P$. Then there exists an exceptional divisor $F$ over $P$ such that $a(F,X)=a(F,W)=1$ (cf. \cite{c}).
	Since blowing-up a flopping curve always induces an exceptional divisor with discrepancy one, there are at least two discrepancy one
	exceptional divisors over $W$.
\end{proof}
\begin{rk}\label{minx}
	Given a flop $X\dashrightarrow X'$ over $W$ such that $X$ as well as $X'$ are $\Q$-factorial.
	Then $W$ has exactly two $\Q$-factorizations, namely $X$ and $X'$.
	Indeed, if $X_1\rightarrow W$ is a small contraction such that $X_1$ is $\Q$-factorial. Let $A_{X_1}$ be an ample divisor on $X_1$. Then
	either $A_X$ or $A_{X'}$ is ample because of the condition $\rho(X/W)=1$. Thus $X_1$ is either isomorphic to $X$ or $X'$.
\end{rk}

\subsection{Weighted blow-ups}
Let $G=\ang{\tau\mbox{ }\vline\mbox{ }\tau^r=id}$ be a cyclic group of order $r$. For any $\Z$-valued $n$-tuple $(a_1,...,a_n)$
one can define a $G$-action on $\A^n_{(x_1,...,x_n)}$ by $\tau(x_i)=\xi^{a_i}x_i$, where $\xi=e^{\frac{2\pi i}{r}}$.
We will denote the quotient space $\A^n/G$ by $\A^n_{(x_1,...,x_n)}/\frac{1}{r}(a_1,...,a_n)$.\par
Let $W\cong \A^n_{(x_1,...,x_n)}/\frac{1}{r}(a_1,...,a_n)$ be a cyclic-quotient singularity.
There is an elementary way to construct a birational morphism $Y\rightarrow W$, so called the weighted blow-up, defined as follows.\par
We write everything using the language of toric varieties. Let $N$ be the lattice $\ang{e_1,...,e_n,v}_{\Z}$,
where $\{e_1,...,e_n\}$ is the standard basis of $\R^n$ and $v=\frac{1}{r}(a_1,...,a_n)$. Let $\sigma=\ang{e_1,...,e_n}_{\R\gq0}$.
We have $W\cong \spc\Cc [N\du\cap \sigma\du]$.\par
Let $w=\frac{1}{r}(b_1,...,b_n)$ be a vector such that $b_i=\lambda a_i+k_ir$ for $\lambda\in \N$ and $k_i\in\Z$. The weighted blow-up
of $W$ with weight $w$ is the toric variety defined by the fan consists of those cones
\[\sigma_i=\ang{e_1,...,e_{i-1},w,e_{i+1},...,e_n}.\]
Let $U_i$ be the toric variety defined by the cone $\sigma_i$ and lattice $N$.
\begin{lem}\label{wbup}
	Let	\[ v'=\frac{1}{b_i}(-b_1,...,-b_{i-1},r,-b_{i+1},...,-b_n)\] and
	\[w'=\frac{1}{rb_i}(a_1b_i-a_ib_1,...,a_{i-1}b_i-a_ib_{i-1},ra_i,a_{i+1}b_i-a_ib_{i+1},...,a_nb_i-a_ib_n). \]
	Assume that $u=\frac{1}{r'}(a'_1,...,a'_n)$ is a vector such that \[\ang{e_1,...,e_n,v',w'}_{\Z}=\ang{e_1,..,e_n,u}_{\Z},\]
	then \[U_i\cong \A^n/\frac{1}{r'}(a'_1,...,a'_n).\]\par
	In particular, if $\lambda=1$, then $U_i\cong\frac{1}{b_i}(-b_1,...,-b_{i-1},r,-b_{i+1},...,-b_n)$.
\end{lem}
\begin{cor}
	Assume that the hypothesis of Lemma \ref{wbup} is satisfied. Let $x_1$, ..., $x_n$ be local coordinates of $X$ and $y_1$, ..., $y_n$ be
	local coordinates of $U_i$ defined by the cyclic quotient structure.
	The change of coordinates of $U_i\rightarrow X$ is given by $x_i=y_i^{\frac{b_i}{r}}$ and $x_j=y_jy_i^{\frac{b_j}{r}}$ for $j\neq i$.
\end{cor}
\begin{cor}
	Assume that \[S=(f_1(x_1,...,x_n)=...=f_k(x_1,...,x_n)=0)\subset W\] is a complete intersection subvariety
	and $S'$ is the proper transform of $S$ on $Y$. Assume that the exceptional locus $E$ of $S'\rightarrow S$ is irreducible and reduced.
	Then \[a(S,E)=\frac{b_1+...+b_n}{r}-\sum_{i=1}^k\w_wf_k(x_1,...,x_n)-1.\]
\end{cor}
Please see \cite[Section 2.2]{me} for the proofs of the above lemma and corollaries.\par
In this paper we consider terminal threefolds which are embedding into cyclic quotient of $\A^4$ or $\A^5$
\[ X\hookrightarrow \A^4_{(x,y,z,u)}/\frac{1}{r}(a,b,c,d)\quad\mbox{or}\quad X\hookrightarrow \A^5_{(x,y,z,u,t)}/\frac{1}{r}(a,b,c,d,e).\]
We say that $Y\rightarrow X$ is a weighted blow-up with weight $w$ if $Y$ is the proper transform of $X$ inside the weighted blow-up
of $\A^4_{(x,y,z,u)}/\frac{1}{r}(a,b,c,d)$ or $\A^5_{(x,y,z,u,t)}/\frac{1}{r}(a,b,c,d,e)$ with weight $w$.
\begin{cov}\label{cov}
	Assume that $X$ is of the above form and let $Y\rightarrow X$ be a weighted blow-up.
	The notation $U_x$, $U_y$, $U_z$, $U_u$ and $U_t$ will stand for $U_1$, ..., $U_5$ in Lemma \ref{wbup}.
\end{cov}

\subsection{Terminal threefolds}\label{ster}

\subsubsection{Singularities of terminal threefolds}
\begin{defn}
	Let $Y\rightarrow X$ be a divisorial contraction contracts a divisor $E$ to a point $P$. We say that $Y\rightarrow X$ is a 
	\emph{$w$-morphism} if $a(X,E)=\frac{1}{r_P}$, where $r_P$ is the Cartier index of $K_X$ near $P$.
\end{defn}
\begin{defn}
	The depth of a terminal singularity $P\in X$, $dep(P\in X)$, is the minimal length of the sequence
	\[ X_m\rightarrow X_{m-1}\rightarrow \cdots\rightarrow X_1\rightarrow X_0=X,\]
	such that $X_m$ is Gorenstein and $X_i\rightarrow X_{i-1}$ is a $w$-morphism for all $1\leq i\leq m$.\par
	The generalize depth of a terminal singularity $P\in X$, $gdep(P\in X)$, is the minimal length of the sequence
	\[ X_n\rightarrow X_{n-1}\rightarrow \cdots\rightarrow X_1\rightarrow X_0=X,\]
	such that $X_n$ is smooth and $X_i\rightarrow X_{i-1}$ is a $w$-morphism for all $1\leq i\leq n$. The variety $X_n$
	is called a \emph{feasible resolution} of $P\in X$.\par
	The Gorenstein depth of a terminal singularity $P\in X$, $\gd(P\in X)$, is defined by $gdep(P\in X)-dep(P\in X)$.\par
	For a terminal threefold we can define \[dep(X)=\sum_P dep(P\in X),\]
	\[gdep(X)=\sum_P gdep(P\in X)\]
	and \[ \gd(X)=\sum_P \gd(P\in X).\]
\end{defn}
\begin{rk}
	In the above definition, the existence of a sequence \[ X_m\rightarrow X_{m-1}\rightarrow \cdots\rightarrow X_1\rightarrow X_0=X,\]
	such that $X_m$ is Gorenstein follows from \cite[Theorem 1.2]{h2}. The existence of a sequence
	\[ X_n\rightarrow X_{n-1}\rightarrow \cdots\rightarrow X_1\rightarrow X_0=X,\] such that $X_n$ is smooth follows from \cite[Theorem 2]{c}.
\end{rk}
\begin{defn}
	Assume that $Y\rightarrow X$ is a $w$-morphism such that $gdep(Y)=gdep(X)-1$. Then we say that $Y\rightarrow X$ is a
	\emph{strict $w$-morphism}.
\end{defn}
\begin{rk}\label{sww}
	It has been proved in \cite[Section 5]{me2}, that if $Y\rightarrow X$ is a strict $w$-morphism, then $\gd(Y)=\gd(X)$.
\end{rk}
\begin{rk}\label{sdep}
	Assume that $X$ is terminal threefold with $\gd(X)=0$ and $P\in X$ is a singular point. By \cite[Corollary 3.4]{mo2} we know that
	$gdep(P\in X)=1$ if and only if $P$ is a $\frac{1}{2}(1,1,1)$ point (since $\gd(X)=0$ implies that $P$ is not a Gorenstein singular point).
	Also assume that $gdep(P\in X)=2$, then $P$ is a $\frac{1}{3}(1,2,1)$ point or a $cA/2$ point defined by
	\[ (xy+z^2+u^2=0)\subset\A^4_{(x,y,z,u)}/\frac{1}{2}(1,1,1,0).\] Indeed, in this case there exists a $w$-morphism
	$X_1\rightarrow X$ such that $X_1$ contains exactly one singular point which is a $\frac{1}{2}(1,1,1)$ point. Now $w$-morphisms
	between terminal threefolds are listed in \cite{h1} and \cite{h2}. One can verify that the preceding two cases are the only possibility. 
\end{rk}
We will need the following properties of the depth:
\begin{pro}[\cite{me2}, Porposition 5.1]\label{ied}$ $
	\begin{enumerate}[(1)]
	\item Assume that $Y\rightarrow X$ is a divisorial contraction between terminal and $\Q$-factorial threefolds.
		\begin{enumerate}[({1}-1)]
		\item If $Y\rightarrow X$ is a divisorial contraction to a point, then
			\[gdep(X)\leq gdep(Y)+1\mbox{ and }dep(X)\leq dep(Y)+1.\]
			If $Y\rightarrow X$ is a divisorial contraction to a curve, then
			\[gdep(X)\leq gdep(Y)\mbox{ and }dep(X)\leq dep(Y).\]
		\item $\gd(X)\geq \gd(Y)$ and the inequality is strict if the non-isomorphic locus on $X$ contains a Gorenstein singular point.
		\end{enumerate}			
	 \item Assume that $X\dashrightarrow X'$ is a flip between terminal and $\Q$-factorial threefolds. 
	 	\begin{enumerate}[({2}-1)]
	 	\item \[ gdep(X)>gdep(X')\mbox{ and }dep(X)>dep(X').\]
	 	\item $\gd(X)\leq\gd(X')$ and the inequality is strict if the non-isomorphic locus on $X'$ contains a Gorenstein singular point.
	 	\end{enumerate}
	\end{enumerate}
\end{pro}
As an easy corollary one has that:
\begin{cor}\label{feam}
	Assume that \[ X_0\dashrightarrow X_1\dashrightarrow...\dashrightarrow X_k\] is a process of the minimal model program.
	Then $\gd(X_k)\geq \gd(X_0)$. The inequality is strict if the non-isomorphic locus of $X_0\dashrightarrow X_k$ on $X_k$ contains a
	Gorenstein singular point.
\end{cor}

\begin{rk}\label{car}
	Assume that $P\in X$ is a three-dimensional cyclic quotient point, then there is only one terminal divisorial contraction $Y\rightarrow X$
	over $P$, which is a weighted blow-up in \cite[Theorem 5]{ka2}. An easy computation shows that $Y$ has only cyclic quotient singularities.\par
	Likewise, assume that $P\in X$ is a three-dimensional $cA/r$ singularity with $r>1$, then every terminal divisorial contraction
	$Y\rightarrow X$ is a weighted blow-up in \cite[Theorem 1.2 (i)]{k2}. One can easily compute that $Y$ has only $cA/r$ singularities.
\end{rk}

\subsubsection{Chen-Hacon factorization}\label{sch}

\begin{thm}[\cite{ch} Theorem 3.3]\label{chf}
	Assume that either $Y\dashrightarrow Y_1$ be a flip over $U$, or $Y\rightarrow U$ is a divisorial contraction to a curve and $Y$ is
	not Gorenstein over $U$. Then there exists a diagram
	\[\vc{\xymatrix@C=0.8cm{Z_0\ar[d] \ar@{-->}[r] & ... \ar@{-->}[r] & Z_{i} \ar[rd] & & Z_{i+1} \ar[ld]\ar@{-->}[r]
		& ... \ar@{-->}[r]  & Z_k\ar[d]\\
		Y\ar[rrrd] & & & V_i\ar[d] & & & Y_1\ar[llld] \\ & & & U & &	& }}\]
	such that $Z_0\rightarrow Y$ is a strict $w$-morphism (\cite[Corollary 5.8]{me2}), $Z_k\rightarrow Y_1$ is a divisorial contraction,
	$Z_0\dashrightarrow Z_{1}$ is a flip or a flop over $V_0$ and $Z_i\dashrightarrow Z_{i+1}$ is a flip over $V_i$ for $i>0$.
	$Y_1\rightarrow U$ is a divisorial contraction to a point if $Y\rightarrow U$ is divisorial.
\end{thm}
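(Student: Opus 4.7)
The approach is to extract a minimal-discrepancy divisor over a non-Gorenstein point of $Y$ by a $w$-morphism, then run the relative $K$-MMP over $U$, and control termination through the depth invariant. In the flip case at least one of $Y$, $Y_1$ carries a non-Gorenstein singularity (otherwise the flipping contraction would be a Gorenstein flopping contraction); after relabelling I may take this to be $Y$. In the divisorial case $Y$ is non-Gorenstein over $U$ by hypothesis. Pick such a point $P \in Y$ and let $\pi_0 : Z_0 \to Y$ be a $w$-morphism centred over $P$, whose existence is supplied by the classification underlying \cite{h1, h2}. By Proposition \ref{chfp}(1), $\mbox{dep}(Z_0) \le \mbox{dep}(Y) - 1$, and since $\rho(Y/U) = 1$ and $\pi_0$ is extremal we have $\rho(Z_0/U) = 2$.

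Consequently $\overline{NE}(Z_0/U)$ has a unique extremal ray $R_0$ distinct from the one contracted by $\pi_0$; let $g_0 : Z_0 \to V_0$ be its contraction. If $g_0$ is divisorial I set $k = 0$ and $Y_1 := V_0$; otherwise I perform the $K_{Z_0}$-flip across $V_0$ when $K_{Z_0} \cdot R_0 < 0$, or the flop across $V_0$ when $K_{Z_0} \cdot R_0 = 0$, producing $Z_1$. A flop can occur at this first step because the discrepancy contribution $\frac{1}{r}E_0$ of the $w$-morphism can push the non-$\pi_0$ ray onto the $K_{Z_0}$-trivial boundary of the cone. Iterating the same dichotomy on $Z_1, Z_2, \ldots$, each of Picard rank two over $U$, produces $Z_0 \dashrightarrow Z_1 \dashrightarrow \cdots \dashrightarrow Z_k$; after the first step the $w$-morphism perturbation has been absorbed and one checks that for $i \ge 1$ the non-$\pi_i$ ray is strictly $K_{Z_i}$-negative, so each $Z_i \dashrightarrow Z_{i+1}$ is a flip.

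Termination is supplied by Proposition \ref{chfp}(3): each flip strictly decreases $\mbox{dep}$, the at-most-one flop preserves it, and $\mbox{dep}(Z_0)$ is finite. At termination $K_{Z_k}$ is nef over $U$, so the unique non-$\pi_k$-extremal ray must be divisorial, and its contraction gives $Z_k \to Y_1$. In the flip case, comparing Picard numbers and using the ampleness of $K_{Y_1}$ over $U$ identifies this $Y_1$ with the original flip of $Y$. In the divisorial case one verifies that the divisor contracted by $Z_k \to Y_1$ is sent to a single point, so that $Y_1 \to U$ is a divisorial contraction to a point as claimed.

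The hard part will be matching the terminus of the MMP with the correct $Y_1$: one has to trace which divisor on $Z_k$ corresponds to the $w$-exceptional $E_0$ and which to the strict transform of the original $Y$-exceptional divisor in the divisorial case, and use the Picard-rank bookkeeping above to exclude degenerate contractions (for instance a small contraction at the last step, or a divisorial contraction to a curve instead of a point). A secondary obstacle is the existence of the intermediate flips; this can be arranged by induction on $\mbox{dep}$, since each such flip is performed on a variety of strictly smaller depth than $Y$, with the base case $\mbox{dep} = 0$ (Gorenstein terminal) reduced to classical existence theorems for three-dimensional flops.
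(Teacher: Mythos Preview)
The paper does not give its own proof of this statement; it is quoted verbatim from Chen--Hacon \cite{ch}, Theorem~3.3. Your overall strategy---take a $w$-morphism over a non-Gorenstein point of $Y$, play the two-ray game over $U$, and bound the length of the resulting chain by the depth---is precisely the Chen--Hacon approach, so there is no alternative route to compare.

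That said, your write-up has one genuine logical slip and one unaddressed case. The sentence ``At termination $K_{Z_k}$ is nef over $U$, so the unique non-$\pi_k$-extremal ray must be divisorial'' is incoherent: if $K_{Z_k}$ were nef over $U$ there would be no $K$-negative extremal ray to contract, and nefness in any event says nothing about whether a given ray is small or divisorial. The correct logic runs the other way: the chain of small modifications terminates (by depth, as you note), and \emph{termination of the two-ray game means} the next contraction is not small, hence divisorial; that divisorial contraction is $K$-negative, so $K_{Z_k}$ is certainly not nef. Second, at the very first step you treat $K_{Z_0}\cdot R_0<0$ (flip) and $K_{Z_0}\cdot R_0=0$ (flop) but never exclude $K_{Z_0}\cdot R_0>0$. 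Since $K_{Z_0}=\pi_0^*K_Y+\tfrac{1}{r}E_0$ with $K_Y\cdot\pi_{0*}C_0<0$ but $E_0\cdot C_0\ge 0$, the sign is not automatic; ruling out the positive case is part of what Chen--Hacon actually establish, and your phrase ``the $w$-morphism perturbation has been absorbed'' does not substitute for it. A minor point: the ``relabelling'' is both unnecessary and dangerous---for a three-fold terminal flip the flipping side $Y$ is always non-Gorenstein (since $-1<K_Y\cdot\Gamma<0$ is non-integral), and running the construction from the $K$-ample side $Y_1$ would make the two-ray game collapse back to $Y_1$ immediately.
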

\begin{rk}\label{flpc}
	Notation as in the above theorem. Assume that $Y\dashrightarrow Y_1$ is a flip, $C_{Z_0}$ is the flipping/flopping curve of $Z_0\dashrightarrow Z_1$ and $C_Y$ is the image of $C_{Z_0}$ on $Y$. Then $C_Y$ is a flipping curve of $Y\dashrightarrow Y_1$. This fact follows from the construction of the diagram.
\end{rk}

\section{First factorization}\label{sff}

\subsection{General elephants}
Let $X$ be a germ of a three-dimensional terminal singularity. The general hyperplane section of $X$ (so called the \emph{general elephant}
of $X$) has only Du Val singularities, please see \cite[(6.4)]{re2} for more details. The singularity of the general elephant can be used to measure the singularities of $X$. In this subsection we will discuss how to estimate the general elephants of varieties appearing in the
Chen-Hacon factorization.
\begin{lem}
	Let $W$ be a terminal threefold. Assume that $H$ is a reduced and irreducible effective Weil divisor on $W$ such that $K_W+H$ is
	$\Q$-Cartier and $H$ has only Du Val singularities. Then $(W,H)$ is canonical.
\end{lem}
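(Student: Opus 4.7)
The approach uses adjunction along $H$ together with the canonicality of Du Val surfaces. Choose an snc log resolution $f\colon Y\to W$ of the pair $(W,H)$, let $H_Y$ denote the strict transform of $H$, and write
\[
K_Y+H_Y=f^*(K_W+H)+\sum_i a_i E_i,
\]
where $a_i = a(E_i,W)-\mathrm{mult}_{E_i}(f^*H)$. Terminality of $W$ gives $a(E_i,W)>0$, so the task is to show $a_i\geq 0$ for every exceptional $E_i$.

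The main step is restriction to $H_Y$. By adjunction $(K_Y+H_Y)|_{H_Y}=K_{H_Y}$ on the smooth divisor $H_Y\subset Y$; and since $K_W+H$ is $\mathbb Q$-Cartier with $(K_W+H)|_H=K_H$, one has $f^*(K_W+H)|_{H_Y}=g^*K_H$, where $g=f|_{H_Y}\colon H_Y\to H$. Hence
\[
K_{H_Y}-g^*K_H=\sum_i a_i\,(E_i|_{H_Y}).
\]
Because $H$ has Du Val (hence canonical) singularities, the left-hand side is an effective $g$-exceptional divisor on $H_Y$. In the snc situation, each $E_i|_{H_Y}$ is either empty or a single smooth irreducible curve; distinct $E_i$'s that meet $H_Y$ produce distinct curves; and every $g$-exceptional curve arises in this way (any such curve lies in $f^{-1}(\mathrm{pt})\cap H_Y\subset\bigcup E_i$, and snc pins it to a single $E_i$). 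Matching coefficients in an effective divisor then forces $a_i\geq 0$ for every $i$ with $E_i\cap H_Y\neq\emptyset$; any $E_i|_{H_Y}$ horizontal over $H$ is similarly forced to appear with $a_i=0$, since the left-hand side is $g$-exceptional.

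It remains to treat $E_i$ with $E_i\cap H_Y=\emptyset$. If $c_W(E_i)\not\subset H$ then $\mathrm{mult}_{E_i}(f^*H)=0$, so $a_i=a(E_i,W)>0$. In the delicate subcase $c_W(E_i)\subset H$ with $E_i\cap H_Y=\emptyset$, I plan to induct on the blowup history of $f$: writing $f$ as a sequence of smooth-centre blowups $W=Y_0\leftarrow\cdots\leftarrow Y_N=Y$ (after first resolving $W$), the $n$-th discrepancy equals
\[
a_n=\mathrm{codim}(Z_n)-1-\mathrm{mult}_{Z_n}H_{Y_{n-1}}+\sum_{j<n}a_j\,\mathrm{mult}_{Z_n}E_j.
\]
Inductively $a_j\geq 0$ and $\mathrm{mult}_{Z_n}E_j\geq 0$, so non-negativity follows once $\mathrm{mult}_{Z_n}H_{Y_{n-1}}\leq\mathrm{codim}(Z_n)-1$, which in turn uses that $H$ is Du Val (multiplicity at most two at a point, at most one along a contained curve) and that the multiplicity of a strict transform does not increase under smooth-centre blowups. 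The main obstacle is this multiplicity bound at points lying over singular points of $W$, where the naive Du Val bound does not apply directly; I intend to overcome it by passing to the index-one cover of each non-Gorenstein terminal germ of $W$, reducing to a cDV ambient on which the preimage of $H$ remains Du Val and the standard multiplicity control becomes available.
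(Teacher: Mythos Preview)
Your restriction/adjunction idea is the same one the paper uses, and it correctly handles every exceptional $E_i$ that meets $H_Y$. The real gap is your ``remaining case'': an $E_i$ with $c_W(E_i)\subset H$ but $E_i\cap H_Y=\emptyset$. The inductive multiplicity bound $\mathrm{mult}_{Z_n}H_{Y_{n-1}}\le \mathrm{codim}(Z_n)-1$ that you need is not justified once you have blown up inside the exceptional locus: the strict transform $H_{Y_{n-1}}$ need not remain normal or keep multiplicity $\le 2$ at every point lying over $\mathrm{Sing}(W)$, and ``multiplicity does not increase'' only controls points lying over points of $H$ already having multiplicity $\le 2$, not points created over the singular locus of $W$. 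The proposed fix via index-one covers is too vague to close this---your chosen sequence of smooth-centre blowups on $Y$ has no reason to be compatible with the cover, so you cannot simply transport the Du Val bound across.

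The paper sidesteps this by choosing the resolution rather than taking an arbitrary snc one. It $\Q$-factorises $W$, then runs a sequence of $w$-morphisms to resolve the ambient singularities along $H$, and finally blows up smooth centres contained in the singular locus of the strict transform of $H$ until the pair is log smooth. For this specific tower one checks the \emph{opposite} inequality $a(E_i,W,H)\le 0$ directly at each step: a $w$-morphism has discrepancy $1/r$ while its centre lies in $H$ and contributes multiplicity at least $1/r$; a smooth-centre blowup inside $\mathrm{Sing}(H_{W_j})$ has discrepancy $\mathrm{codim}-1$ and multiplicity at least that. Then the restriction argument (your first step) forces $a_i\ge 0$, hence $a_i=0$, and $(W,H)$ is canonical. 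So what your proposal is missing is exactly this: replacing the arbitrary snc resolution by a tailored one where the upper bound $a_i\le 0$ comes for free, eliminating the problematic case altogether.
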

\begin{proof}
	Let \[ W_m\rightarrow W_{m-1}\rightarrow \cdots\rightarrow W_1\rightarrow W_0=W\] be a sequence of $w$-morphisms resolving the
	singularities of $Sing(W)\cap H$. Let $W_k\rightarrow \cdots\rightarrow W_m$ be a sequence of smooth blow-ups, such that $(W_k,H_{W_k})$
	is log smooth. Let $E_i=exc(W_i\rightarrow W_{i-1})$. By \cite[Lemma 2.7 (2)]{ch} we know that $a(E_i,W_{i-1},H)=0$ for all $1\leq i\leq m$
	and it is easy to compute that $a(E_i,W_{i-1},H)\leq 0$ for $m+1\leq i\leq k$. It follows that $a(E_i,W,H)\leq 0$ for all $i$. 
	This implies that $a(E_i,W,H)=0$ for all $i$, since otherwise $H$ can not have canonical singularities. Hence $(W,H)$ is canonical.
\end{proof}
\begin{cor}\label{flopge}
	Assume that $f:X\rightarrow W$ is a birational morphism between terminal threefolds. Let $H_W$ be a reduced
	and irreducible Weil divisor on $W$ such that $K_W+H$ is $\Q$-Cartier and $H$ has only Du Val singularities. 
	If $K_X+H_X=f\st(K_W+H)$, then $H_X$ has only Du Val singularities and $H_X\rightarrow H$ is a partial resolution.
\end{cor}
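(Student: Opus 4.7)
My plan is to deduce this corollary from the preceding lemma via crepant pullback, and then read off the ``partial resolution'' conclusion from adjunction. First I would apply the lemma to $(W,H)$: the three hypotheses (reduced irreducible, $K_W+H$ being $\Q$-Cartier, and $H$ Du Val) match exactly, so $(W,H)$ is canonical.

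Next I would transfer canonicity across $f$. Since $f$ is birational, every prime divisor over $X$ is also a prime divisor over $W$, and the crepant identity $K_X+H_X=f^{\ast}(K_W+H)$ gives $a(E,X,H_X)=a(E,W,H)\geq 0$ for every exceptional $E$ over $X$. Hence $(X,H_X)$ is canonical. Note also that $H_X$ is $\Q$-Cartier, since both $K_X$ and $K_X+H_X$ are, and that $H_X$ is normal, because a canonical pair with reduced boundary is plt and so its boundary components are normal.

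Third, I want to upgrade this to $H_X$ being Du Val. Because $X$ is a terminal threefold its singular locus is a finite set of points, so the different $\mathrm{Diff}_{H_X}(0)$, being a Weil divisor on the surface $H_X$ with support contained in $H_X\cap\mathrm{Sing}(X)$, must vanish. Adjunction therefore reads $K_{H_X}=(K_X+H_X)|_{H_X}$, and combined with the canonicity of $(X,H_X)$ this forces $H_X$ to have canonical surface singularities, i.e.\ Du Val. The same argument on $W$ yields $K_H=(K_W+H)|_H$. Restricting the crepant equation to $H_X$ then gives $K_{H_X}=(f|_{H_X})^{\ast}K_H$, and since $f|_{H_X}\colon H_X\to H$ is proper and birational by construction, it is a partial resolution.

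The main obstacle is the third step: one has to be sure that the different really vanishes so that adjunction can be used cleanly in the non-Gorenstein setting, and this is precisely where the isolatedness of threefold terminal singularities is essential, forcing any potential different on the $2$-dimensional component $H_X$ to have $0$-dimensional support and therefore to be zero as a Weil divisor.
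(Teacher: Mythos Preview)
Your argument is correct and follows the same route as the paper's proof: apply the preceding lemma to obtain that $(W,H)$ is canonical, pull back crepantly to conclude $(X,H_X)$ is canonical, invoke normality of the boundary (the paper cites \cite[Proposition 5.51]{km}, which is the plt-boundary-is-normal statement you use), and then deduce that $H_X$ is Du Val with $H_X\to H$ a partial resolution. The paper's proof simply asserts this last step without justification; your adjunction argument with the vanishing of the different---using that terminal threefold singularities are isolated, so the support of $\mathrm{Diff}_{H_X}(0)$ is zero-dimensional and hence trivial as a divisor---is exactly the content that makes that step rigorous.
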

\begin{proof}
	We have $(W,H)$ is canonical, hence $(X,H_X)$ is canonical. \cite[Proposition 5.51]{km} implies that $H_X$ is normal.
	Hence $H_X$ has also Du Val singularities and $H_X\rightarrow H$ is a partial resolution.
\end{proof}
\begin{cor}\label{lci}
	Assume that $Y\dashrightarrow Y_1$ is a flip over $U$ and $Z_k\rightarrow Y_1$ is the last step in the factorization of Theorem \ref{chf}.
	Assume that there exists a Du Val section $H_U\in|-K_U|$ and $Z_k\rightarrow Y_1$ is a divisorial contraction to a curve $C_1$,
	then $C_1$ is smooth.
\end{cor}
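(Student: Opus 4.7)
The approach is to apply Corollary~\ref{flopge} to $f\colon Z_k \to Y_1$ with a divisor $H$ on $Y_1$ chosen to make the contraction crepant with respect to the pair. Let $E_k = exc(f)$ with discrepancy $a>0$, so $K_{Z_k} = f^*K_{Y_1}+aE_k$. I would produce $H$ as a general member of a suitable sub-linear system of $|-mK_{Y_1}|$ for $m$ sufficiently divisible, consisting of divisors that contain $C_1$ with $\mul_{C_1} H = a$, have only Du Val singularities, and satisfy $K_{Y_1}+H$ being $\Q$-Cartier. The existence of such $H$ uses the general elephant theorem \cite{re2} together with local analytic adjustments at the finitely many points of $C_1 \cap Sing(Y_1)$, where one appeals to the explicit classification of terminal threefold singularities to arrange the prescribed multiplicity while preserving the Du Val property. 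With this choice, the relation $f^*H = H_{Z_k}+aE_k$ gives $K_{Z_k}+H_{Z_k} = f^*(K_{Y_1}+H)$, and Corollary~\ref{flopge} then yields that $H_{Z_k}$ has only Du Val singularities and $H_{Z_k} \to H$ is a partial resolution.

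Since $C_1 \subset H$ by construction, at each $P \in C_1$ the germ $(H,P)$ is a Du Val surface containing $C_1$ as a Weil divisor, and the map $H_{Z_k} \to H$ locally contracts the fiber of $E_k$ over $P$, a configuration of rational curves sitting inside the Du Val surface $H_{Z_k}$, down to the point $P$. Combining the ADE classification of Du Val singularities with the fact that $H_{Z_k}$ must itself be Du Val and that $\rho(Z_k/Y_1) = 1$, one analyzes the admissible configurations of exceptional curves on $H_{Z_k}$ over $P$ together with the strict transform of $C_1$; any node or cusp of $C_1$ at $P$ would force the strict transform on $H_{Z_k}$ to meet the exceptional configuration in a non-SNC fashion, producing a non-rational (hence non-Du Val) singularity on $H_{Z_k}$, contradicting Corollary~\ref{flopge}. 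Thus $C_1$ is smooth at every $P$.

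The main obstacle is the construction of $H$ with the prescribed multiplicity $a$ along $C_1$ at points where $Y_1$ itself is singular, particularly when $a$ is not a small integer or when $Y_1$ admits non-trivial terminal singularities meeting $C_1$. I would handle this case by case via the explicit normal forms of the terminal singularities of $Y_1$ along $C_1$, constructing the local equation of $H$ analytically at each such point and then patching to a global member of $|-mK_{Y_1}|$ by a Serre vanishing argument on a sufficiently high multiple.
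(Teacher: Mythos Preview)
Your approach differs substantially from the paper's and has a genuine gap. The paper's argument is much shorter and rests on an observation you miss: $C_1$ is a \emph{flipped curve} of $Y\dashrightarrow Y_1$. Indeed, $F_1=exc(Z_k\to Y_1)$ is the strict transform of $exc(Z_0\to Y)$, so $\cen_Y F_1$ is a point while $\cen_{Y_1}F_1=C_1$ is a curve; since $Y\dashrightarrow Y_1$ is an isomorphism in codimension one, $C_1$ must be a flipped curve. Now take $H_U\in|-K_U|$ a general elephant of the flipping contraction, which is Du Val. Applying Corollary~\ref{flopge} to the small morphism $Y_1\to U$ (where crepancy $K_{Y_1}+H_{Y_1}=$ pullback is automatic) gives that $H_{Y_1}\to H_U$ is a partial resolution of a Du Val surface. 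Since $H_{Y_1}\in|-K_{Y_1}|$ and $K_{Y_1}.C_1>0$, we have $C_1\subset H_{Y_1}$, and $C_1$ maps to a point in $H_U$, so it is an exceptional curve of this partial resolution---hence a smooth rational curve.

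Your attempt to work with $Z_k\to Y_1$ instead of $Y_1\to U$ forces you to manufacture a divisor $H$ on $Y_1$ with prescribed multiplicity along $C_1$. Note first that divisorial contractions to curves between terminal threefolds always have discrepancy $a=1$ (generically along $C_1$ the map is a smooth blow-up), so your multiplicity condition is just $C_1\subset H$. Even granting this, your second paragraph does not go through: you claim a node or cusp of $C_1$ would force $H_{Z_k}$ to acquire a non-Du Val singularity, but the Du Val property of $H_{Z_k}$ constrains only the singularities of the surface itself, not of curves lying on it---a Du Val surface can perfectly well contain nodal or cuspidal curves. What actually forces $C_1$ to be smooth is that it is \emph{exceptional} for a partial resolution of a Du Val surface, and this comes from working over $U$, not over $Y_1$.
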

\begin{proof}
	Note that $C_1$ is a flipped curve since $C_1=\cen_{Y_1} F$ and $\cen_Y F$ is a point, where $F=exc(Z_k\rightarrow Y_1)$.
	We know that $H_{Y_1}\rightarrow H_U$ is a partial resolution by Corollary \ref{flopge}. Since $C_1$ is a flipped curve,
	$H_{Y_1}.C_1<0$, so $C_1\subset H_{Y_1}$ and it is an exceptional curve of $H_{Y_1}\rightarrow H_U$. Hence $C_1$ should be smooth.
\end{proof}
Assume that either $Y\dashrightarrow Y_1$ be a flip over $U$, or $Y\rightarrow U$ is a divisorial contraction to a curve and $Y$ is
not Gorenstein over $U$. We have the following diagram
\[\vc{\xymatrix@C=0.8cm{Z_0\ar[d]_h \ar@{-->}[r] & ... \ar@{-->}[r] & Z_{i} \ar[rd]_{\tau_i} & & Z_{i+1} \ar[ld]\ar@{-->}[r]
	& ... \ar@{-->}[r] & Z_k\ar[d]\\ Y\ar[rrrd]_g & & & V_i\ar[d]_{\phi_i} & & & Y_1\ar[llld] \\ & & & U & & & }}\]
as in Theorem \ref{chf}.
\begin{lem}\label{ge1}
	Notation as above. Assume that $H_U\in|-K_U|$ is a reduced and irreducible effective Weil divisor such that
	$K_U+H_U$ is $\Q$-Cartier, $H_U$ has only Du Val singularities and contains the non-isomorphic locus of $Y\rightarrow U$. Then
	$H_{V_i}\in |-K_{V_i}|$ and $H_{V_i}\rightarrow H_U$ is a partial resolution. Moreover, we have $H_{V_0}\not\cong H_U$.
\end{lem}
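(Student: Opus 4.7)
The plan is to reduce everything to the single linear equivalence $K_{V_i}+H_{V_i}\sim 0$. This already gives $H_{V_i}\in|{-}K_{V_i}|$, and together with $K_U+H_U\sim 0$ it means $K_{V_i}+H_{V_i}=\phi_i^*(K_U+H_U)$, so Corollary \ref{flopge} immediately delivers that $H_{V_i}$ is Du Val and that $H_{V_i}\to H_U$ is a partial resolution. The non-isomorphism $H_{V_i}\not\cong H_U$ is then a short separate argument at the end.

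First I would establish the crepancy relation on the top row of the diagram. In the flip case $g:Y\to U$ is small, so $g^*K_U=K_Y$, $g^*H_U=H_Y$, and therefore $K_Y+H_Y=g^*(K_U+H_U)\sim 0$; Corollary \ref{flopge} then gives that $H_Y$ is Du Val. The divisorial case is analogous, using the hypothesis that $H_U$ contains the non-isomorphic locus of $g$ to pin down the coefficient of the unique exceptional divisor of $g$ appearing in $g^*H_U$.

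Next I would propagate through the $w$-morphism $h:Z_0\to Y$, whose exceptional divisor $E$ has discrepancy $\frac{1}{r}$, where $r$ is the Cartier index at $P=h(E)$. The key computation is $\mmul_E(h^*H_Y)=\frac{1}{r}$, which is a compatibility between the general elephant and the $w$-morphism: on the index-$r$ cover $\pi:\tilde Y\to Y$, $rH_Y$ is the vanishing locus of a general weight-one semi-invariant, and under the weighted blow-up whose weights sum to $r+1$ this pulls back to its proper transform plus the cover-side exceptional divisor $\tilde E$ with multiplicity one; descending along the ramification relation $\pi^*E=r\tilde E$ gives multiplicity $\frac{1}{r}$ on $E$. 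Combined with $K_{Z_0}=h^*K_Y+\frac{1}{r}E$, this yields $K_{Z_0}+H_{Z_0}=h^*(K_Y+H_Y)\sim 0$. I would then propagate across each flip or flop in turn: the contractions $\tau_i:Z_i\to V_i$ and $Z_{i+1}\to V_i$ are small, so push-forward of Weil $\Q$-divisors preserves linear equivalence and transports $H_{Z_i}\sim -K_{Z_i}$ down to $H_{V_i}\sim -K_{V_i}$.

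Finally, for $H_{V_i}\not\cong H_U$: if the partial resolution $H_{V_i}\to H_U$ were an isomorphism, then $\phi_i$ would be an isomorphism along $H_{V_i}$, hence over a neighbourhood of $H_U\subset U$. But the non-isomorphic locus of $\phi_i$ is contained in the non-isomorphic locus of $Y\to U$, since the Chen-Hacon factorization is confined to the original flipping locus, and the latter lies in $H_U$ by hypothesis; so $\phi_i$ would then be a global isomorphism, contradicting the non-triviality of $V_i$ as an intermediate step in the factorization. The main difficulty in the plan is the multiplicity computation at the $w$-morphism step; everything else is formal, reducing to small push-forward of Weil divisors plus one invocation of Corollary \ref{flopge}.
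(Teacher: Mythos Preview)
Your overall plan coincides with the paper's: establish $K_Y+H_Y=g^*(K_U+H_U)$, propagate crepancy through the $w$-morphism $h$ to $Z_0$, pass to each $Z_i$ (they are all isomorphic in codimension one), push down along the small map $\tau_i$ to get $K_{V_i}+H_{V_i}=\phi_i^*(K_U+H_U)$, and then invoke Corollary~\ref{flopge}. The paper handles both the $w$-morphism step and the final non-isomorphism by citing \cite[Lemma~2.7~(2),(3)]{ch} rather than arguing from scratch. For the former, a cleaner route than your index-cover computation is available: $(Y,H_Y)$ is canonical since $H_Y$ is Du Val, so $a(E,Y,H_Y)\geq 0$; on the other hand $rH_Y$ is Cartier and vanishes at the centre $P$, so $\mmul_E(h^*H_Y)\geq\tfrac{1}{r}$, giving $a(E,Y,H_Y)\leq 0$; hence equality.

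Your argument for $H_{V_i}\not\cong H_U$, however, has a real gap. From ``$H_{V_i}\to H_U$ is an isomorphism'' you infer that ``$\phi_i$ is an isomorphism over a neighbourhood of $H_U$'', but this does not follow. The morphism $\phi_i:V_i\to U$ always contracts the divisor $E_{V_i}$ (the proper transform on $V_i$ of the exceptional divisor of the $w$-morphism $h$) to a point lying in $H_U$, so $\phi_i$ is never an isomorphism over any neighbourhood of that point, regardless of what the restriction $\phi_i|_{H_{V_i}}$ does. The surface $E_{V_i}$ is not contained in $H_{V_i}$ --- the latter is the \emph{proper} transform of $H_U$, not its total transform --- so the contraction of $E_{V_i}$ is simply invisible to the restricted map $H_{V_i}\to H_U$. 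What you actually need is that $E_{V_i}\cap H_{V_i}$ contains a curve, which is then contracted by $H_{V_i}\to H_U$; this is essentially what \cite[Lemma~2.7~(3)]{ch} supplies and what the paper invokes.
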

\begin{proof}
	If $Y\rightarrow U$ is small, then $K_Y+H_Y=g\st(K_U+H_U)$. Assume that it is a divisorial contraction to a curve $C$.
	Since $H_U$ is normal, it is smooth at the generic point of $C$. Thus we also have $K_Y+H_Y=g\st(K_U+H_U)$.
	This implies that $H_Y\in|-K_Y|$, hence $H_Y$ passes through every
	non-Gorenstein singular point of $Y$. Corollary \ref{flopge} implies that $H_Y$ has also Du Val singularities. Thus we have
	\[K_{Z_0}+H_{Z_0}=h\st(K_Y+H_Y)=(g\circ h)\st(K_U+H_U)=(\phi_0\circ\tau_0)\st(K_U+H_U)\]
	by \cite[Lemma 2.7 (2)]{ch}. Since all $Z_i$ are isomorphic in codimension one, we have
	$K_{Z_i}+H_{Z_i}=(\phi_i\circ\tau_i)\st(K_U+H_U)$ for all $i$. Hence $K_{V_i}+H_{V_i}=\phi\st(K_U+H_U)$ for all $i$.
	This implies that $(V_i,H_{V_i})$ is canonical and so $H_{V_i}$ is normal by \cite[Proposition 5.51]{km} for all $i$.
	Hence $H_{V_i}$ has also Du Val singularities and $H_{V_i}\rightarrow H$ is a partial resolution.
	The fact that $H_{V_0}\not\cong H_U$ follows from \cite[Lemma 2.7 (3)]{ch}.
\end{proof}
Now let $W$ be a germ of three-dimensional non-$\Q$-factorial terminal singularity.
Let $W_1\rightarrow W$ be a $w$-morphism and let $Y\rightarrow W_1$ be a $\Q$-factorization. Let
\[ \vc{\xymatrix@C=0.6cm@R=1.5cm{ Y \ar@{}[r]|= \ar[d] & Y_0 \ar@{-->}[r] & ... \ar@{-->}[r] & Y_i\ar[rd] & 
& Y_{i+1}\ar[ld]\ar@{-->}[r] & ... \ar@{-->}[r] & Y_k\ar[d]\\ W_1\ar[d] & & & & U_i \ar[lllld] & & & X\ar[llllllld] \\ W & & & & & & }}\]
be a sequence of $K_Y$-MMP over $W$.
\begin{lem}\label{ge2}
	Assume that $H\in|-K_W|$ is a Du Val section. Then $H_{U_i}\rightarrow H$ is a partial resolution. 
\end{lem}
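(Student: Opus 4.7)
The plan is to prove the lemma by induction on $i$, establishing the stronger statement that $K_{Y_i}+H_{Y_i}=\psi_i\st(K_W+H)$, where $\psi_i\colon Y_i\to W$ is the structural morphism, and reading off the partial-resolution property of $H_{U_i}$ as a one-step push-down. The base case $i=0$ is the hypothesis of the lemma: by the first lemma of Section \ref{sele}, $(W,H)$ is canonical, so Corollary \ref{flopge} applied to $\phi=\psi_0$ already shows that $H_Y$ is Du Val and $H_Y\to H$ is a partial resolution.

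For the inductive step, let $g_i\colon Y_i\to U_i$ be the $i$-th MMP contraction, with $\eta_i\colon U_i\to W$ the induced morphism. By construction $\rho(Y_i/U_i)=1$ and $K_{Y_i}$ is anti-ample over $U_i$. Since $K_{Y_i}+H_{Y_i}=\psi_i\st(K_W+H)$ is numerically trivial over $U_i$, the cone/contraction theorem gives a unique $\Q$-Cartier class $D_i$ on $U_i$ with $K_{Y_i}+H_{Y_i}=g_i\st D_i$. Comparing with the factorization $\psi_i=\eta_i\circ g_i$ forces $D_i=\eta_i\st(K_W+H)$, and pushing down identifies $D_i$ with $K_{U_i}+H_{U_i}$, so $K_{U_i}+H_{U_i}=\eta_i\st(K_W+H)$. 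Since canonicity is preserved under crepant pullbacks, $(U_i,H_{U_i})$ is canonical; \cite[Proposition 5.51]{km} then gives that $H_{U_i}$ is normal, and adjunction together with the Du Val classification of normal canonical surface singularities yields that $H_{U_i}$ is Du Val and $H_{U_i}\to H$ is a partial resolution. To close the induction it remains to propagate the pullback formula to $Y_{i+1}$: in the divisorial case $Y_{i+1}=U_i$, so there is nothing to do; in the flipping case the flipped contraction $Y_{i+1}\to U_i$ is small with relative Picard rank one, and pulling back $K_{U_i}+H_{U_i}=\eta_i\st(K_W+H)$ along it gives $K_{Y_{i+1}}+H_{Y_{i+1}}=\psi_{i+1}\st(K_W+H)$.

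The main technical obstacle is that in the flipping-contraction case $U_i$ is merely canonical, not terminal, so Corollary \ref{flopge} cannot be quoted verbatim. The proof of that corollary does adapt, however, since its core mechanism — canonicity of $(W,H)$ propagates to $(U_i,H_{U_i})$ via the crepant pullback identity, whence Proposition 5.51 of \cite{km} yields normality of $H_{U_i}$ and adjunction upgrades this to the Du Val property — is insensitive to whether $U_i$ is terminal or only canonical. The one place where care is needed is in justifying the descent of $K_{Y_i}+H_{Y_i}$ to a $\Q$-Cartier class on $U_i$ in the flipping case, where $K_{U_i}$ itself is not $\Q$-Cartier; this is precisely why one works throughout with the log class $K+H$ rather than with $K$ alone.
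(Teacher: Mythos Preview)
Your proof is correct and follows essentially the same approach as the paper: establish the crepant pullback identity $K_{U_i}+H_{U_i}=\eta_i\st(K_W+H)$ and then invoke the argument behind Corollary~\ref{flopge}. Your additional care in running the induction through the $Y_i$ and in noting that $U_i$ need not be terminal (so that Corollary~\ref{flopge} must be re-argued rather than merely cited) is a genuine improvement over the paper's two-line proof, which quotes Corollary~\ref{flopge} directly without addressing this hypothesis.
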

\begin{proof}
	By \cite[Lemma 2.7 (2)]{ch} and the fact that $Y\rightarrow W_1$ is isomorphic in codimension one, we know that $K_Y+H_Y=\phi\st(K_W+H)$,
	where $\phi$ is the morphism $Y\rightarrow W$. It follows that $K_{U_i}+H_{U_i}=\phi_i\st(K_W+H)$ where $\psi_i$ is the morphism
	$U_i\rightarrow W$, since $Y\dashrightarrow U_i$ is isomorphic is codimension one. Thus $H_{U_i}$ has only Du Val singularities
	and $H_{U_i}\rightarrow H$ is a partial resolution by Corollary \ref{flopge}.
\end{proof}

\subsection{Factorizing flops}
Let $X\dashrightarrow X'$ be a three-dimensional terminal flop over $W$. Let $W_1\rightarrow W$ be a strict $w$-morphism and let $Y$ be a
$\Q$-factorization of $W_1$. 
\begin{lem}
	$\rho(Y/W)=2$.
\end{lem}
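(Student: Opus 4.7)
The plan is to compute $\rho(Y/W)$ by decomposing it along the tower $Y \to W_1 \to W$. Since both arrows are projective birational, hence contractions with connected fibres, the standard additivity of relative Picard numbers gives
\[ \rho(Y/W) = \rho(Y/W_1) + \rho(W_1/W), \]
so it suffices to show each summand equals $1$.

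The term $\rho(W_1/W) = 1$ is immediate: by the definition of $w$-morphism recalled in Section \ref{sch}, $W_1 \to W$ is a divisorial contraction, and divisorial contractions in the MMP sense are required to have relative Picard number $1$.

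For $\rho(Y/W_1) = 1$, I would argue through the ``defect'' of $W_1$, where by defect I mean the rank of $(\mathrm{Cl}/\mathrm{Pic}) \otimes \mathbb{Q}$ computed in an analytic neighbourhood of the exceptional locus. Since $Y \to W_1$ is a small $\mathbb{Q}$-factorialization, $\rho(Y/W_1)$ equals $\mathrm{defect}(W_1)$. The defect of $W$ is already known to be $1$: the flop furnishes a small $\mathbb{Q}$-factorialization $X \to W$ with $\rho(X/W) = 1$. I would then show $\mathrm{defect}(W_1) = \mathrm{defect}(W) = 1$ as follows. Let $E$ be the exceptional divisor of $W_1 \to W$. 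Locally around the image point, the exact sequence $0 \to \mathbb{Z}\cdot E \to \mathrm{Cl}(W_1) \to \mathrm{Cl}(W) \to 0$ adds one class in $\mathrm{Cl}$ upon passing from $W$ to $W_1$; on the other hand, $E$ is $\mathbb{Q}$-Cartier on $W_1$, because $K_{W_1} - \tfrac{1}{r_P} E = f^*K_W$ is $\mathbb{Q}$-Cartier and $K_{W_1}$ is $\mathbb{Q}$-Cartier by terminality, so $E \in \mathrm{Pic}(W_1) \otimes \mathbb{Q}$. Thus $E$ contributes equally to $\mathrm{Cl}$ and $\mathrm{Pic}$, leaving the defect unchanged.

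Putting the pieces together, $\rho(Y/W_1) = \mathrm{defect}(W_1) = 1$, and hence $\rho(Y/W) = 1 + 1 = 2$. The main subtlety I expect is justifying the local/germ nature of the defect computation—specifically that $\mathrm{Pic}(W) \otimes \mathbb{Q}$ vanishes in a suitable analytic neighbourhood of the flopping point (so that ``defect'' really measures everything in $\mathrm{Cl}$), and that the formula $\rho(Y'/W_1) = \mathrm{defect}(W_1)$ for a small $\mathbb{Q}$-factorialization is invoked in the correct local setting. Both of these are standard facts for terminal threefold germs, so the computation is essentially bookkeeping once the framework is set.
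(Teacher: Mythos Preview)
Your proof is correct but takes a genuinely different route from the paper's. The paper does not decompose along $Y\to W_1\to W$ at all; instead it runs the $K_Y$-MMP over $W$. Since $Y$ carries exactly one exceptional divisor over $W$ (the proper transform of $E=\mathrm{exc}(W_1\to W)$), this MMP consists of flips, which preserve the relative Picard number, and exactly one divisorial contraction, which drops it by $1$. The output is a $\Q$-factorial minimal model small over $W$, hence a $\Q$-factorialization of $W$, which one may take to be $X$; thus $\rho(Y/W)=\rho(X/W)+1=2$.

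The paper's argument is pure MMP bookkeeping---no class-group or defect computations---and sidesteps precisely the local-versus-global subtleties you flag at the end. It also dovetails with what comes next: the very next lemma re-uses the same MMP run to identify the two minimal models as $X$ and $X'$. Your approach is more structural: it explains directly why a divisorial extraction raises $\rho$ by one over the base (the new divisor $E$ is $\Q$-Cartier, hence contributes equally to $\mathrm{Cl}$ and $\mathrm{Pic}\otimes\Q$), and the argument would go through verbatim for any divisorial extraction over $W$, not just a $w$-morphism. Either way the content is the same identity $\rho(Y/W)=\rho(X/W)+1$, reached from opposite ends of the tower.
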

\begin{proof}
	Let $X''$ be the relative minimal model of $Y\rightarrow W$. Let $E=exc(W_1\rightarrow W)$, then $E_Y$ is covered by
	$K_Y$-negative curves, so the MMP will contract $E_Y$. Thus $X''\rightarrow W$ is isomorphic in codimension one. This means that
	$X''$ is a $\Q$-factorization of $W$. By Remark \ref{minx} we know that $X''$ is isomorphic to either $X$ or $X'$, hence $\rho(X''/W)=1$.
	On the other hand, since there are only one exceptional divisor on $Y$ over $W$, there are only one divisorial contraction in the MMP.
	Hence $\rho(Y/X'')=1$ and so $\rho(Y/W)=2$. 
\end{proof}
If $W_1$ is $\Q$-factorial, then there are two $K_{W_1}$-negative extremal rays over $W$. One can run two different MMP and one may assume
that $X$ is one of the minimal model. We denote the other minimal model by $X_1$.
If $W$ is not $\Q$-factorial, then $Y\rightarrow W$ is a flopping contraction. One can construct a flop $Y\dashrightarrow Y'$ over $W_1$ as in
\cite{k}. We may assume that $X$ is a minimal model of $Y$ over $W$. Let $X_1$ be a minimal model of $Y'$ over $W$.
\begin{lem}
	Notation as above. One has $X_1=X'$.
\end{lem}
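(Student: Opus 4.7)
The plan is to identify $X_1$ with the flop $X'$ by showing that $X_1$ is a small $\Q$-factorialization of $W$ with $\rho(X_1/W)=1$, distinct from $X$, and then invoking Koll\'ar's uniqueness of three-dimensional terminal flops.

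First I would verify that $X_1\to W$ is small with $\rho(X_1/W)=1$. By the previous lemma, $\rho(Y/W)=2$, and the only prime exceptional divisor of $Y\to W$ is the proper transform of $E=\mathrm{exc}(W_1\to W)$. Running the $K_Y$-MMP over $W$ (or, in the non-$\Q$-factorial case for $W_1$, the $K_{Y'}$-MMP after the flop $Y\dashrightarrow Y'$ over $W_1$), the MMP consists of flips together with exactly one divisorial contraction, which must contract the proper transform of $E$. The output $X_1$ is therefore $K$-trivial and small over $W$ with $\rho(X_1/W)=1$; that is, $X_1$ is a $\Q$-factorialization of $W$ in the sense of Koll\'ar's flop theory.

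Next I would invoke \cite{k}: for a three-dimensional terminal flopping contraction $X\to W$ with $\rho(X/W)=1$, the small $\Q$-factorial modifications of $W$ with relative Picard rank one are precisely $X$ and its flop $X'$. Hence $X_1\in\{X,X'\}$, and the only thing left is to rule out $X_1=X$.

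The heart of the argument is this last step. In the non-$\Q$-factorial case for $W_1$, the flop $Y\dashrightarrow Y'$ over $W_1$ is by construction non-trivial. Let $C_Y\subset Y$ be the flopping curve over $W_1$ and let $E_Y$ be the proper transform of $E$ on $Y$. In $N_1(Y/W)$ the classes $[C_Y]$ and $[\ell]$, for $\ell$ a curve in $E_Y$, span two different extremal rays of $\overline{NE}(Y/W)$, and only the $[\ell]$-ray is $K_Y$-negative. The MMP $Y\dashrightarrow X$ thus contracts $E_Y$ but leaves the proper transform of $C_Y$ intact as a flopping curve on $X$ over $W$; a symmetric picture for $Y'\dashrightarrow X_1$ makes the flop $Y\dashrightarrow Y'$ over $W_1$ descend to a non-trivial flop $X\dashrightarrow X_1$ over $W$, forcing $X_1=X'$ by uniqueness. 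In the $\Q$-factorial case for $W_1$ one runs the analogous argument using the two distinct $K_{W_1}$-negative extremal rays of $Y=W_1$ over $W$. The main obstacle I foresee is this careful bookkeeping: confirming that the flopping curve $C_Y$ is never absorbed into the divisorial contraction being performed, which I would settle by the numerical/extremal-ray analysis of $\overline{NE}(Y/W)$ sketched above.
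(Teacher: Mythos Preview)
Your overall strategy—showing $X_1$ is a small $\Q$-factorial modification of $W$ with $\rho(X_1/W)=1$, hence $X_1\in\{X,X'\}$ by Koll\'ar, and then ruling out $X_1=X$—is sound, and the first part is straightforward. The gap is in the last step.

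Your extremal-ray analysis only controls the \emph{first} step of the MMP. It is true that $-K_Y$ is nef over $W$ (since $K_Y$ is pulled back from $K_{W_1}$, which is anti-ample over $W$) and that $[C_Y]$ spans the $K_Y$-trivial face, so the first contraction indeed avoids $C_Y$. But if that first step is a flip rather than the divisorial contraction, then $-K_{Y_1}$ is no longer nef over $W$ (the flipped curve is $K_{Y_1}$-positive), and your argument does not iterate: after several flips there is no numerical obstruction in sight to the proper transform of $C_Y$ becoming a flipping curve or being swept into the eventual divisorial contraction. The ``descent of the flop $Y\dashrightarrow Y'$ to a flop $X\dashrightarrow X_1$'' that you invoke is therefore not justified by what you have written; this is precisely the bookkeeping you flagged as an obstacle, and the sketch does not resolve it.

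The paper avoids tracking $C_Y$ altogether and instead argues by backward induction along the two MMPs. Assume $X_1=X$. Then $Y_k\to X$ and $Y'_{k'}\to X$ extract the same exceptional divisor, so $Y_k\cong Y'_{k'}$. Since $\rho(Y_k/W)=2$, the cone $\overline{NE}(Y_k/W)$ has exactly two extremal rays: the $K_{Y_k}$-negative one (contracted to $X$) and a $K_{Y_k}$-positive one. Both $Y_{k-1}$ and $Y'_{k'-1}$ are the anti-flip of $Y_k$ along that positive ray, hence they coincide. Iterating, $Y_{k-i}\cong Y'_{k'-i}$ for all $i$, so (say $k\geq k'$) $Y'\cong Y_j$ for some $j\geq 0$. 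But $-K_{Y'}$ is nef over $W$ while $-K_{Y_j}$ is not for $j>0$; hence $j=0$, i.e.\ $Y\cong Y'$, and the two MMPs begin by contracting the same extremal ray. This contradicts the construction in both the $\Q$-factorial case (where the two rays were chosen distinct) and the non-$\Q$-factorial case (where $Y\dashrightarrow Y'$ is a nontrivial flop).
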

\begin{proof}
	Assume not, then $X_1=X$. We will show that this is impossible.\par
	If $W_1$ is not $\Q$-factorial, we denote \[Y=Y_0\dashrightarrow Y_1\dashrightarrow \cdots Y_k\rightarrow X\]
	be the $K_Y$-MMP over $W$, where $Y_i\dashrightarrow Y_{i+1}$ is a flip and $Y_k\rightarrow X$ is a divisorial contraction.
	Similarly we denote \[Y'=Y'_0\dashrightarrow Y'_1\dashrightarrow \cdots Y'_{k'}\rightarrow X_1\] 
	be the $K_{Y'}$-MMP over $W$. If $W_1$ is $\Q$-factorial, we denote
	\[ X\leftarrow Y_k\dashleftarrow\cdots\dashleftarrow Y_1\dashleftarrow Y_0=W_1
		=Y'_0\dashrightarrow Y'_1\dashrightarrow \cdots Y'_{k'}\rightarrow X_1\] 
	be the two different $K_{W_1}$-MMP over $W$. Assume that $X_1=X$, then $Y_k=Y'_{k'}$ because $Y_k\rightarrow X$ and
	$Y'_{k'}\rightarrow X_1=X$ extracts the same exceptional divisor. Interchanging $Y_j$ and $Y'_j$ if necessary we may assume that $k\geq k'$.
	Now $NE(Y_k/W)$ is generated by a $K_{Y_k}$-negative extremal ray and a $K_{Y_k}$-positive extremal ray. $Y_{k-1}$ and $Y'_{k'-1}$
	are both the anti-flip along the $K_{Y_k}$-positive extremal ray, hence we have $Y_{k-1}\cong Y'_{k'-1}$. By induction we may assume that
	$Y_{k-i}\cong Y_{k'-i}$ for all $i>0$, and so $Y_i\cong Y'$ for some $i$. Since $K_{Y'}$ is anti-nef over $W$ but $K_{Y_i}$ is not unless
	$i=0$, we know that $Y\cong Y'$ and both $Y\dashrightarrow Y_1$ and $Y'\dashrightarrow Y'_1$ contract the same extremal ray,
	this contradicts to our construction.
\end{proof}
In conclusion, we have
\begin{cor}\label{ff}
	Let $X\dashrightarrow X'$ be a three-dimensional terminal flop over $W$, then there exists a factorization
	\[ X\leftarrow Y_k\dashleftarrow\cdots\dashleftarrow Y_1\dashleftarrow Y_0=Y\dashrightarrow Y'=Y'_0
		\dashrightarrow Y'_1\dashrightarrow \cdots Y'_{k'}\rightarrow X_1\] 
	such that $Y_i\dashrightarrow Y_{i+1}$ and $Y'_{i'}\dashrightarrow Y'_{i'+1}$ are flips and either $Y=Y'$ or
	$Y\dashrightarrow Y'$ is a flop.
\end{cor}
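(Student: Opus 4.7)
The proof is the combination of the two lemmas proved immediately above the corollary, so the plan is to organize and deploy their content. I would begin by fixing a $w$-morphism $W_1\to W$: since $X\to W$ is a nontrivial small contraction of a terminal threefold, $W$ must have at least one singular point along the flopping locus (in particular a Gorenstein cDV point is available), and the classification of divisorial contractions recalled in Section \ref{sch} produces a $w$-morphism over any such point. Then take a small $\Q$-factorialization $Y\to W_1$, with the convention that $Y=W_1$ whenever $W_1$ is already $\Q$-factorial.

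Next I would set up the two MMPs that will form the two ``legs'' of the factorization. By the first lemma, $\rho(Y/W)=2$, so $N^1(Y/W)$ carries exactly two extremal rays. If $W_1$ is $\Q$-factorial, put $Y'=Y$; each of the two $K_{W_1}$-negative extremal rays over $W$ is then followed by a sequence of flips terminating in a divisorial contraction, yielding two minimal models which we call $X$ (the given one) and $X_1$ (the other). If $W_1$ is not $\Q$-factorial, then $Y\to W_1$ is small and $Y\to W$ is itself a flopping contraction; one invokes Koll\'ar's construction \cite{k} to produce the flop $Y\dashrightarrow Y'$ over $W_1$, and runs $K_Y$-MMP, respectively $K_{Y'}$-MMP, over $W$ to obtain $X$ and $X_1$.

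The last and only substantive point is to identify $X_1$ with $X'$, which is exactly the content of the second lemma. Combining everything produces the factorization
\[ X \leftarrow Y_k \dashleftarrow \cdots \dashleftarrow Y_0 = Y \dashrightarrow Y' = Y'_0 \dashrightarrow \cdots \dashrightarrow Y'_{k'} \to X_1 = X' \]
claimed in the corollary. The main conceptual obstacle is precisely the non-coincidence $X_1\ne X$, handled by the second lemma via walking the two MMPs backwards and inductively matching extremal rays (using that each $N_1(Y_j/W)$ is two-dimensional with a unique $K$-positive ray); I would regard that as the real content, with the corollary itself the clean packaging of the construction.
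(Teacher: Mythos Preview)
Your proposal is correct and matches the paper's approach exactly: the corollary is stated in the paper with no separate proof (just ``In conclusion, we have''), being precisely the packaging of the two preceding lemmas that you identify and deploy. One small quibble: the singular point of $W$ under the flopping locus need not be Gorenstein cDV (the flop is only assumed terminal), but this does not affect the argument since $w$-morphisms exist over any terminal singular point.
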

The following lemma will be used in Section \ref{sce}.
\begin{lem}\label{gdfp}
	Let $X\dashrightarrow X'$ be a terminal flop over $W$. Then $dep(X)\leq dep(W)$. The equality holds
	if and only if $X\dashrightarrow X'$ is a Gorenstein flop.
\end{lem}
\begin{proof}
	If $X\dashrightarrow X'$ is a Gorenstein flop, then $dep(X)=dep(W)$. So we may assume that $X\dashrightarrow X'$ is a non-Gorenstein
	flop and we want to prove that $dep(X)<dep(W)$.\par
	We will prove the statement by induction on $gdep(W)$. We have a factorization
	as in Corollary \ref{ff}. If $gdep(W)=1$ then $W_1=Y=Y'$ is smooth. In this case $k=k'=0$ and $Y\rightarrow X$ is a smooth blow-down.
	This means that $X\dashrightarrow X'$ is a smooth flop, so $W$ is Gorenstein. Thus $dep(X)=dep(W)=0$.\par
	In general by the induction hypothesis we know that $dep(Y)\leq dep(W_1)$. If $k\neq 0$ then $dep(Y_k)<dep(Y)$ by Proposition \ref{ied}
	and so $dep(X)<dep(Y_k)+1\leq dep(Y)$. If $k=0$ then $Y_k\rightarrow X$ is a divisorial contraction to the flopping curve. One also has
	$dep(X)\leq dep(Y)$ by Proposition \ref{ied}. Thus \[ dep(X)\leq dep(Y)\leq dep(W_1)=dep(W)-1<dep(W).\]
\end{proof}
\begin{cov}\label{cn}
Let $Y^{(0)}_{(i)}=Y_i$, $U^{(0)}_{(i)}=U_i$. For any $n$-tuple $I=(a_1,...,a_n)$, we denote $I_n+1$ by the tuple $(a_1,...,a_n+1)$. Define 
\[\vc{\xymatrix@C=0.8cm{Y^{(j)}_{(I,0)}\ar[d]\ar@{-->}[r] & ...\ar@{-->}[r] & Y^{(j)}_{(I,i)} \ar[rd] & &
	Y^{(j)}_{(I,i+1)} \ar[ld]\ar@{-->}[r] & ... \ar@{-->}[r] & Y^{(j)}_{(I,k^{(j)}_{I})}\ar[d]\\
	Y^{(j)}_{I}\ar[rrrd] & & & U^{(j)}_{(I,i)}\ar[d] & & & Y^{(j)}_{I+1}\ar[llld] \\ & & & U^{(j)}_{I} & &	& }}\]
to be the factorization of the flip $Y^{(j)}_{I}\dashrightarrow Y^{(j)}_{I+1}$ as in Theorem \ref{chf}. Also, if
$Y^{(j)}_{(I,k^{(j)}_I)}\rightarrow Y^{(j)}_{I+1}$ is a divisorial contraction to a curve such that $Y^{(j)}_{(I,k^{(j)}_I)}$ is not Gorenstein
over $Y^{(j)}_{I+1}$, then we define $Y^{(j+1)}_I=Y^{(j)}_{(I,k^{(j)}_I)}$, $U^{(j+1)}_I=Y^{(j)}_{I+1}$ and
\[\vc{\xymatrix@C=0.8cm{Y^{(j+1)}_{(I,0)}\ar[d]\ar@{-->}[r] & ...\ar@{-->}[r] & Y^{(j+1)}_{(I,i)} \ar[rd] & &
	Y^{(j+1)}_{(I,i+1)} \ar[ld]\ar@{-->}[r] & ...\ar@{-->}[r] & Y^{(j+1)}_{(I,k^{(j+1)}_I)}\ar[d]\\
	Y^{(j+1)}_I\ar[rrrd] & & & U^{(j+1)}_{(I,i)}\ar[d] & & & Y^{(j+1)}_{I+1}\ar[llld] \\ & & & U^{(j+1)}_I & & & }}\]
to be the factorization of $Y^{(j)}_{(I,k^{(j)}_I)}\rightarrow Y^{(j)}_{I+1}$.
\end{cov}
We will keep this convention in the rest of this article. Nevertheless, usually every variety we are studying lies on the same
level, namely the superscript $(j)$ are all the same. In those situations we will omit the superscript and write $Y^{(j)}_I$ as $Y_I$.
Moreover, if $H$ is a divisor on some variety $Z$ which is birational to $Y^{(j)}_I$ and $H$ intersects the isomorphism locus of
$Z\dashrightarrow Y^{(j)}_I$ non-trivially. Then we will denote the proper transform of $H$ on $Y^{(j)}_I$ by $H^{(j)}_I$ or $H_I$ (in the
case that the superscript can be omit) instead of $H_{Y^{(j)}_I}$. 
\begin{lem}\label{sm}
	If $Y_{(I,0)}\dashrightarrow Y_{(I,1)}$ is a flop, then it is simple.
\end{lem}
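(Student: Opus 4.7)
The plan is to constrain the flopping curves of $Y_{(I,0)} \to U_{(I,0)}$ by comparing the flop's extremal ray with that of the $w$-morphism $\phi \colon Y_{(I,0)} \to Y_I$. The ramification formula reads $K_{Y_{(I,0)}} = \phi^{\ast} K_{Y_I} + \tfrac{1}{r}E$, with $K_{Y_I} \equiv 0$ over $U_I$ (since $Y_I \to U_I$ is a flipping contraction) and $K_{Y_{(I,0)}} \equiv 0$ over $U_{(I,0)}$ (since $Y_{(I,0)} \to U_{(I,0)}$ is a flopping contraction). For any flopping curve $C$ I consider three a priori possibilities: (i) $C \subset E$; (ii) $C \not\subset E$ and $C \cap E \neq \emptyset$, in which case $\phi(C)$ is a flipping curve of $Y_I \dashrightarrow Y_{I+1}$ through $P$; or (iii) $C \cap E = \emptyset$. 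Possibility (iii) is excluded because $\phi(C)$ would then be a $K$-trivial curve contracted by the $K$-negative extremal contraction $Y_I \to U_I$, which is impossible.

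Next I will eliminate case (i) globally. Because $\rho(Y_{(I,0)}/U_{(I,0)}) = 1$, all flopping curves are numerically proportional on $Y_{(I,0)}$, hence have the same intersection number with $E$. Curves in case (i) satisfy $E \cdot C < 0$ while curves in case (ii) satisfy $E \cdot C > 0$, so either every flopping curve lies in $E$ or none does. If every flopping curve lay in $E$, then the flopping extremal ray would lie in the one-dimensional $N_1(Y_{(I,0)}/Y_I)$, hence would coincide with the $w$-morphism's extremal ray. This contradicts the fact that $\overline{NE}(Y_{(I,0)}/U_I)$ has two distinct extremal rays, since $\rho(Y_{(I,0)}/U_I) = \rho(Y_{(I,0)}/Y_I) + \rho(Y_I/U_I) = 2$. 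Therefore every flopping curve falls in case (ii); since $\phi$ is an isomorphism outside $E$, distinct flopping curves on $Y_{(I,0)}$ push forward to distinct flipping curves on $Y_I$ through $P$.

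To finish I need to conclude that only one such flopping curve can occur. Here I will invoke the general-elephant analysis of Lemma \ref{ge1} and Corollary \ref{flopge}: a suitably chosen elephant $H_{U_I}$ pulls back to Du Val sections $H_{Y_I}$ and $H_{Y_{(I,0)}}$, and the flipping curves through $P$ appear as $(-2)$-curves in the partial resolution $H_{Y_I} \to H_{U_I}$ meeting the Du Val singularity of $H_{Y_I}$ at $P$. Combining this with the fact that the $w$-morphism at $P$ performs a single $(-2)$-curve extraction on the elephant, together with the extremality of the flopping ray, I expect to rule out more than one flipping curve through $P$, giving simplicity. The main obstacle lies precisely in this last counting: ensuring that a single non-Gorenstein point $P$ cannot be met by two distinct flipping curves in the same numerical ray; this will require a careful inspection of the Du Val extraction at $P$, and may ultimately invoke the classification of Mori's three-dimensional terminal flipping extremal neighborhoods.
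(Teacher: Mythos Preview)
Your setup correctly shows that every flopping curve of $Y_{(I,0)}\to U_{(I,0)}$ is the proper transform of a flipping curve of $Y_I\dashrightarrow Y_{I+1}$ passing through the centre $P$ of the $w$-morphism. The gap is exactly where you locate it: you have not shown that only one such curve occurs, and the route you sketch (Du Val extraction counts, Mori's extremal-neighbourhood classification) is both heavy and not obviously conclusive, since several flipping curves in the same extremal ray can perfectly well pass through a single non-Gorenstein point.

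The paper closes this with a much lighter observation that bypasses any classification. The exceptional locus of the flipping contraction $Y_I\to U_I$ is a \emph{tree} of $\Pp^1$'s (e.g.\ because $R^1f_*\Oo_{Y_I}=0$, so every fibre has arithmetic genus zero). Hence any two irreducible flipping curves meet in at most one point; in particular, the flipping curves $C_1,\dots,C_k$ through $P$ meet pairwise \emph{only} at $P$. After the $w$-morphism $Y_{(I,0)}\to Y_I$, which is an isomorphism away from $P$, their proper transforms on $Y_{(I,0)}$ are therefore pairwise disjoint. But the flopping locus of $Y_{(I,0)}\to U_{(I,0)}$ is connected (it is the fibre over a point), and you have already shown it is contained in the union of these disjoint proper transforms. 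Connectedness forces it to lie in a single one, so the flop is simple. Replace your final paragraph with this tree-plus-connectedness argument and the proof is complete.
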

\begin{proof}
	The flipping curve of $Y_I\dashrightarrow Y_{I+1}$ is a tree of $\Pp^1$'s. Let $C_1$, ..., $C_k$ be those flipping curves
	which passing though the non-isomorphic locus of $Y_{(I,0)}\rightarrow Y_I$. Then the proper transform of $C_i$ on
	$Y_{(I,0)}$ are all disjoint. Since flopping curves should be connected, the flop $Y_{(I,0)}\dashrightarrow Y_{(I,1)}$
	is simple.
\end{proof}

Combining Lemma \ref{ge1} and Lemma \ref{ge2}, one has 
\begin{lem}\label{ge}
	If $H\in|-K_W|$ is a Du Val section, then $H_{U^{(j)}_I}\in|-K_{U^{(j)}_I}|$ and
	$H_{U^{(j)}_{I}}\rightarrow H$ is a partial resolution for all possible $j$, $I$.
\end{lem}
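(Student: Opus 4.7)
The plan is to argue by induction on the pair $(j,|I|)$ in lexicographic order, establishing at each stage not only the two properties stated in the lemma but also the auxiliary fact that $H_{U^{(j)}_I}$ contains the non-isomorphic locus of $Y^{(j)}_I \to U^{(j)}_I$; this extra bookkeeping is exactly what is needed in order to feed the result into Lemma \ref{ge1} at the next step.

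For the base case $j=0$, the varieties $U^{(0)}_{(i)}=U_i$ are the MMP bases appearing in Corollary \ref{ff} (and in the diagram preceding Lemma \ref{ge2}). Lemma \ref{ge2} applies directly and delivers $H_{U^{(0)}_{(i)}}\in|-K_{U^{(0)}_{(i)}}|$, Du Val, with $H_{U^{(0)}_{(i)}}\to H$ a partial resolution. The auxiliary containment is automatic: the non-isomorphic locus of $Y^{(0)}_{(i)}\to U^{(0)}_{(i)}$ is a flipping point, which must be non-Gorenstein (otherwise no flip exists), and any member of $|-K_{U^{(0)}_{(i)}}|$ passes through every non-Gorenstein point of $U^{(0)}_{(i)}$.

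For the inductive step within a fixed level $j$, assume the conclusion for $H_{U^{(j)}_I}$. The flip $Y^{(j)}_I\dashrightarrow Y^{(j)}_{I+1}$ is factored by Theorem \ref{chf} through the bases $U^{(j)}_{(I,i)}$, and by induction all hypotheses of Lemma \ref{ge1} applied with base $U^{(j)}_I$ are met by $H_{U^{(j)}_I}$. That lemma then yields $H_{U^{(j)}_{(I,i)}}\in|-K_{U^{(j)}_{(I,i)}}|$, Du Val, with $H_{U^{(j)}_{(I,i)}}\to H_{U^{(j)}_I}$ a partial resolution; composing with $H_{U^{(j)}_I}\to H$ gives the partial resolution to $H$. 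The containment for the next flipping point again reduces to its non-Gorensteinness. To pass from level $j$ to level $j+1$ one uses $U^{(j+1)}_I=Y^{(j)}_{I+1}$ together with the fact that $Y^{(j+1)}_I\to U^{(j+1)}_I$ is a divisorial contraction to a curve $C_1$; the containment $C_1\subset H_{U^{(j+1)}_I}$ is precisely what Corollary \ref{lci} supplies, so Lemma \ref{ge1} can be invoked again to launch the factorization at level $j+1$.

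The main obstacle is exactly the containment hypothesis of Lemma \ref{ge1}: without knowing that $H$ contains the non-isomorphic locus at each stage of the iterated factorization, one cannot carry the Du Val elephant through to the next base in the Chen--Hacon diagram. Once that hypothesis is verified at every transition — by non-Gorensteinness of the flipping point for flipping steps, and by Corollary \ref{lci} for flipped curves appearing as images of divisorial contractions — the rest is a routine bookkeeping iteration of Lemmas \ref{ge1} and \ref{ge2} along the two-indexed family $(j,I)$.
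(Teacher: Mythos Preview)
Your proposal is correct and takes essentially the same approach as the paper, which records the lemma as an immediate consequence of Lemma~\ref{ge1} and Lemma~\ref{ge2}; you have simply supplied the inductive bookkeeping on $(j,I)$ that the paper leaves implicit. Two small points of polish: the containment $C_1\subset H$ you invoke at level transitions is established in the \emph{proof} of Corollary~\ref{lci} rather than in its statement, and for the base case you should note that the hypothesis $K_Y+H_Y=\phi^\ast(K_W+H)$ of Lemma~\ref{ge2} holds because $W_1\to W$ is a $w$-morphism (so \cite[Lemma~2.7(2)]{ch} applies) and $Y\to W_1$ is small.
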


\begin{lem}\label{gore}
	We have $\gd(X)\geq \gd(Y_I^{(j)})$ for all possible $j$, $I$. Moreover, if the exceptional locus of $X$ over $W$ contains a
	Gorenstein singular point, then the inequality is strict.
\end{lem}
\begin{proof}
	By Corollary \ref{feam} we know that $\gd(X)\geq \gd(Y_i)=\gd(Y^{(0)}_{(i)})$ for all $i$, and the inequality is strict if
	the exceptional locus of $X$ over $W$ contains a Gorenstein singular point. Now for any possible $I$ and $j$ one has that
	$\gd(Y_{I,l}^{(j)})\leq\gd(Y_{I+1}^{(j)})$ for all $0\leq l\leq k_I^{(j)}$ and also for $j>0$ one has
	$\gd(Y_{I+1}^{(j)})\leq\gd(U^{(j)}_I)=\gd(Y^{(j-1)}_I)$. This finishes the prove.
\end{proof}

\begin{pro}\label{fact1}
	Any three-dimensional terminal flop can be factorize into a composition of divisorial contractions to points, blow-up smooth curves,
	smooth flops, and inverses of above maps. 
\end{pro}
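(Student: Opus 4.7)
The plan is to induct on $\alpha := GorE(X/W)\in (DV\times\N)\cup\{-\infty\}$, equipped with the well-order from Section \ref{sele}. The base case $\alpha=-\infty$ is immediate, since then $X\dashrightarrow X'$ is itself already a flop of the allowed form.

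For the inductive step, assume $\alpha>-\infty$. First I apply Corollary \ref{ff} to produce a chain
\[ X\leftarrow Y_k\dashleftarrow \cdots \dashleftarrow Y_0=Y \dashrightarrow Y'=Y'_0\dashrightarrow \cdots \dashrightarrow Y'_{k'}\rightarrow X' \]
coming from a $w$-morphism $W_1\to W$ and a $\Q$-factorization $Y\to W_1$: the outer morphisms are divisorial contractions from the $K_Y$- and $K_{Y'}$-MMP over $W$, the arrows $Y_i\dashrightarrow Y_{i+1}$ and $Y'_i\dashrightarrow Y'_{i+1}$ are flips, and $Y\dashrightarrow Y'$ is either an isomorphism or a flop over $W_1$. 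Next I iteratively apply Theorem \ref{chf} to every flip (and to every divisorial contraction to a curve whose source fails to be Gorenstein over the target), bookkeeping by Convention \ref{cn}. The iteration terminates since each new layer begins with a $w$-morphism, which strictly lowers depth by Proposition \ref{chfp}, and depth is a non-negative integer.

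At the terminal level the factorization of $X\dashrightarrow X'$ consists of $w$-morphisms (which are divisorial contractions to points) and their inverses, the middle flop $Y\dashrightarrow Y'$, the simple flops $Y^{(j)}_{(I,0)}\dashrightarrow Y^{(j)}_{(I,1)}$ produced by Theorem \ref{chf} (simplicity follows from Lemma \ref{sm}), and divisorial contractions to curves whose source is Gorenstein over the target. By Corollary \ref{lci} each contracted curve is smooth; when it lies in the smooth locus of the target the contraction already has the desired form, and otherwise I refine it further using \cite[Theorem 1.1]{ch} and \cite[Theorem 1.2]{c2} into a composition of divisorial contractions to points and blow-ups of smooth curves in smooth loci, all of which are allowed moves.

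It remains to show every flop in this final factorization has $GorE$ strictly smaller than $\alpha$, so the inductive hypothesis closes the argument. Each such flop $Z\dashrightarrow Z'$ takes place over some base $V$ sitting between its source and $W$, hence its flopping locus lies in $exc(Z/W)$ and $GorE(Z/V)\leq GorE(Z/W)\leq\alpha$ by Lemma \ref{gore}; the second inequality is strict whenever $X$ carries a Gorenstein singular point in $exc(X/W)$, and the inductive hypothesis then disposes of $Z\dashrightarrow Z'$. The main obstacle is the remaining case in which every singular point of $exc(X/W)$ is non-Gorenstein, since Lemma \ref{gore} then yields only non-strict inequality; I plan to dispose of this case by strengthening the induction to the lexicographic pair $(\alpha, dep(X))$ and exploiting the fact that the initial $w$-morphism $W_1\to W$ strictly drops depth at a chosen non-Gorenstein singularity of $W$, so that each flop produced in the new factorization has strictly smaller source depth than $X$.
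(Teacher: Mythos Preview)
Your overall architecture is right: apply Corollary \ref{ff}, then recursively apply Theorem \ref{chf}, and induct so that every new flop is strictly smaller. The difficulty, as you identify, is the case where $exc(X/W)$ carries no Gorenstein singular point, so Lemma \ref{gore} gives only $GorE\leq\alpha$. Your proposed fix---strengthening the induction to the pair $(\alpha,\,dep(X))$---does not work.

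The problem is that $dep(Y)$ need not be smaller than $dep(X)$. Trace the MMP $Y=Y_0\dashrightarrow\cdots\dashrightarrow Y_k\to X$ from Corollary \ref{ff}: the last map $Y_k\to X$ is a divisorial contraction whose exceptional divisor is $exc(W_1\to W)$, and its center on $X$ lies in the flopping curve. Proposition \ref{chfp} gives only $dep(Y_k)\geq dep(X)-1$ (and $\geq dep(X)$ if the center is a curve), while each flip going \emph{backward} from $Y_k$ to $Y$ strictly \emph{raises} depth. Hence $dep(Y)\geq dep(Y_k)+k\geq dep(X)-1+k$, so whenever $k\geq 1$ you already have $dep(Y)\geq dep(X)$, and the lexicographic pair does not drop for the flop $Y\dashrightarrow Y'$. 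The $w$-morphism $W_1\to W$ does lower $dep(W)$, but there is no general inequality $dep(X)=dep(W)$ for flopping contractions (indeed $\Q$-factorializations can strictly lower depth, cf.\ Lemma \ref{lce6} where $dep(Y)=2<3=dep(W_1)$), so you cannot transfer that drop to the source side.

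The paper closes this gap with a different secondary invariant: the Du Val type $\Xi(P)=GE(P)$ of a general elephant at the base point $P\in W$ (set to $-\infty$ when $P$ is Gorenstein). The key input you are missing is Lemma \ref{ge} (and \cite[Lemma 2.7]{ch} for $W_1$): for every base $U^{(j)}_I$ arising in the recursion, the general elephant $H_{U^{(j)}_I}\to H$ is a genuine partial resolution, so the Du Val type at the new base point is strictly smaller than at $P$. Inducting on $(GorE(X/W),\,\Xi(P),\,gdep(W))$ then works: in your Case B the base $P$ is non-Gorenstein, and $\Xi$ strictly drops for every new flop. You should replace the depth argument with this general--elephant comparison.
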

\begin{proof}
	In this proof we say that a birational map has a factorization if it can be factorize into a composition of
	(inverses of) divisorial contractions to points, blow-up smooth curves and smooth flops.\par
	Assume that $X\dashrightarrow X'$ is a singular flop over $W$. By Corollary \ref{ff} we have a decomposition
	$X\dashleftarrow Y\dashrightarrow Y'\dashrightarrow X'$. We need to show that $Y\dashrightarrow X$ has a factorization
	and $Y\dashrightarrow Y'$ has a factorization. Then by the symmetry we can also assume that $Y'\dashrightarrow X'$ has a factorization,
	so $X\dashrightarrow X'$ has a factorization.\par 
	As in Convention \ref{cn} we can a factorize the birational map
	$Y\dashrightarrow X$ to the following maps:
	\begin{enumerate}[(i)]
	\item $Y_I^{(j)}\leftarrow Y_{(I,0}^{(j)}$ which is an inverse of $w$-morphism.
	\item $Y_{(I,0)}^{(j)}\dashrightarrow Y_{(I,1)}^{(j)}$ which is a flop.
	\item $Y_{(I,k_I)}^{(j)}\rightarrow Y_{I+1}^{(j)}$ which is either a divisorial contraction to a point or a divisorial contraction
		to a curve such that $Y_{I,k_I}^{(j)}$ do not have non-Gorenstein point over $Y_{I+1}^{(j)}$. 
	\end{enumerate}
	Notice that if $Y_{I,k_I}^{(j)}\rightarrow Y_{I+1}^{(j)}$ is a divisorial contraction to a curve, then the curve is smooth by
	Lemma \ref{ge} and Corollary \ref{lci}. In this case by \cite[Theorem 4]{cu} we know that $Y_{I,k_I}^{(j)}\rightarrow Y_{I+1}^{(j)}$
	is a blowing-up a smooth curve. Thus $Y\dashrightarrow X$ has a factorization if whenever $Y^{(j)}_{I,0}\dashrightarrow Y^{(j)}_{I,1}$
	is a singular flop, it has a factorization. Let $\Ss_0$ be the set consists the flops $Y\dashrightarrow Y'$
	and $Y^{(j)}_{I,0}\dashrightarrow Y^{(j)}_{I,1}$ for all $j$ and $I$, and let
	$\Ss=\{Z\dashrightarrow Z'\mbox{ is a singular flop}\}\subset\Ss_0$.
	It is enough to show that $Z\dashrightarrow Z'$ has a factorization if $Z\dashrightarrow Z'\in\Ss$.\par
	Let $P\in W$ be the image of flopping curves of $X$ on $W$ and let $GE(P)=A_i$, $D_j$ or $E_k$ denotes the type of a general elephant near $P$.
	We will prove the statement by induction on the pair $(\gd(X),GE(P))$, where the relation between general elephants are given by
	\[ A_i<A_{i'}<D_j<D_{j'}<E_6<E_7<E_8\] if $i<i'$, $j<j'$. Notice that by Lemma \ref{gore} one has $\gd(Y^{(j)}_I)\leq\gd(X)$ for all
	possible $j$ and $I$. This means that for all $Z\dashrightarrow Z'\in\Ss$ one has that $\gd(Z)\leq\gd(X)$
	Moreover, if $X\dashrightarrow X'$ is a Gorenstein flop, then the inequality is strict. In this case by the induction hypothesis
	we know that the flop $Z\dashrightarrow Z'$ has a factorization for all $Z\dashrightarrow Z'\in\Ss$.\par
	Now assume that $X\dashrightarrow X'$ is a non-Gorenstein flop. Given $Z\dashrightarrow Z'\in\Ss$. If $Z\dashrightarrow Z'$ is
	a Gorenstein flop, then we already know that $Z\dashrightarrow Z'$ has a factorization. Assume that $Z\dashrightarrow Z'$ is a
	non-Gorenstein flop over $V$ and $Q\in V$ is the image of flopping curves, then $Q$ is a non-Gorenstein point. By Lemma \ref{ge}
	we know that $H_V\in|-K_V|$ and $H_V\rightarrow H$ is a partial resolution, for any Du Val section $H\in|-K_W|$. This means that
	$GE(Q)<GE(P)$, hence $Z\dashrightarrow Z'$ also has a factorization.
\end{proof}

\begin{proof}[Proof of Theorem \ref{fthm}]
	Assume that $X\dashrightarrow X'$ is a flop over $W$. By Proposition \ref{fact1} we already know that $X\dashrightarrow X'$
	can be factorize into a composition of (inverse of) divisorial contractions and smooth flops. We only need to show that
	we may further assume that those smooth flops are all simple. Notice that those flops $Y_{(I,0)}^{(j)}\dashrightarrow Y_{(I,1)}^{(j)}$
	are always simple by Lemma \ref{sm}, so we only need to consider the flop $Y\dashrightarrow Y'$.\par
	If $X\dashrightarrow X'$ is a smooth flop and $gdep(W)=1$, then $Y=W_1=Y'$ is smooth and so $Y_k=Y$. In this case the flop is simple
	(in fact it is an Atiyah flop). Now $Y\dashrightarrow Y'$ is a flop over $W_1$ and $gdep(W_1)=gdep(W)-1$, hence one can induction
	on $gdep(W)$ and assume that every smooth flops in the factorization of $Y\dashrightarrow Y'$ is simple. Hence every smooth flops
	in the factorization of $X\dashrightarrow X'$ is simple. 
\end{proof}

\section{Factorization of flips with simple singularities}\label{sflip}

We begin with the following important property of three-dimensional flips:
\begin{lem}[\cite{b}, Theorem 0]\label{bfc}
	Assume that $X\dashrightarrow X'$ is a three-dimensional canonical flip and $C$ is a flipping curve, then $-1<K_X.C<0$.
\end{lem}

\begin{lem}\label{cohF}
	Assume that $Y\dashrightarrow Y'$ is an Atiyah flop. Let $C_Y\subset Y$ be the flopping curve, $C_{Y'}\subset Y'$ be the flopped curve and
	$S\subset Y$ be a surface. Assume that either $S$ is smooth or $Sing(S)$ is pure of dimension one and $C_Y\not\subset Sing(S)$. Then
	either $S_{Y'}$ is smooth or $Sing(S_{Y'})$ is pure of dimension one.
\end{lem}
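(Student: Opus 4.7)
The plan is to analyze $S_{Y'}$ via the common resolution $\phi:Z\rightarrow Y$, $\phi':Z\rightarrow Y'$ of the Atiyah flop, where $Z=Bl_{C_Y}Y=Bl_{C_{Y'}}Y'$ and $E=exc(\phi)=exc(\phi')\cong\Pp^1\times\Pp^1$, with two rulings given by the fibers of $\phi|_E:E\rightarrow C_Y$ and of $\phi'|_E:E\rightarrow C_{Y'}$. Let $S_Z$ denote the strict transform of $S$ in $Z$. Since the flop is an isomorphism on $Y\setminus C_Y\cong Y'\setminus C_{Y'}$, it identifies $Sing(S_{Y'})\setminus C_{Y'}$ with $Sing(S)\setminus C_Y$, which by hypothesis is empty or pure of dimension one. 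It remains to control the singularities of $S_{Y'}$ along $C_{Y'}$.

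First I would describe the scheme-theoretic intersection $S_Z\cap E$. If $C_Y\not\subset S$ and $f$ is a local defining equation of $S$ near $C_Y$, then $S_Z=\phi\st S$ and $S_Z\cap E$ is cut out on $E$ by the pullback of $f|_{C_Y}$, so it is a union of fibers of $\phi|_E$ with multiplicities. If instead $C_Y\subset S$, the hypothesis $C_Y\not\subset Sing(S)$ forces $S$ to have multiplicity one along $C_Y$; then $\phi|_{S_Z}:S_Z\rightarrow S$ is the blow-up of the Cartier divisor $C_Y\subset S$ (which is an isomorphism on the smooth locus of $S$), and one checks that $S_Z\cap E$ is a section of $\phi|_E$, namely a $(1,1)$-curve on $\Pp^1\times\Pp^1$. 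In either case, $S_Z\cap E$ contains no entire fiber of the opposite ruling $\phi'|_E$, by a simple bidegree comparison.

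Consequently, $\phi'|_{S_Z}:S_Z\rightarrow S_{Y'}$ contracts no curve in $Z$, so it is a finite birational morphism onto $S_{Y'}$. I would then decompose the singular locus of $S_{Y'}$ near $C_{Y'}$ into the non-normal locus of $\phi'|_{S_Z}$, which is empty or of pure codimension one in a surface, and the image of $Sing(S_Z)$. Off $E$, $Sing(S_Z)$ corresponds to $Sing(S)\setminus C_Y$ via $\phi$; along $E$, $Sing(S_Z)$ is confined to the one-dimensional subscheme $S_Z\cap E$ and so its image in $S_{Y'}$ lies along a one-dimensional subset meeting $C_{Y'}$. In every situation, $Sing(S_{Y'})$ near $C_{Y'}$ is either empty or pure of dimension one; combined with the first paragraph, this gives the lemma.

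The hard part will be to rule out an isolated singular point of $S_{Y'}$ appearing on $C_{Y'}$. This is expected to be handled by the description of $S_Z\cap E$ above as a proper divisor in $E$ supported on fibers of a single ruling (or consisting of one section), together with an explicit coordinate analysis of the Atiyah flop, e.g. on the local model $W=\{xy=zw\}\subset\A^4$, to treat borderline behaviour such as tangential intersection of $S$ with $C_Y$ producing non-reduced components of $S_Z\cap E$, or multiple transverse intersections producing $m$-fold gluings of $S_Z$ along $C_{Y'}$.
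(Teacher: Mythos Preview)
Your overall strategy via the common resolution $Z=Bl_{C_Y}Y=Bl_{C_{Y'}}Y'$ is the same as the paper's, but there is a genuine gap in the key step. The assertion that when $C_Y\subset S$ the intersection $S_Z\cap E$ is a $(1,1)$-curve on $E\cong\Pp^1\times\Pp^1$ is not correct in general. The curve $S_Z\cap E$ is the section of $\phi|_E$ determined by the sub-line-bundle $N_{C_Y/S}\hookrightarrow N_{C_Y/Y}=\Oo(-1)\oplus\Oo(-1)$; when $N_{C_Y/S}\cong\Oo(-1)$ this section is a \emph{constant} section, i.e.\ a fiber of the opposite ruling $\phi'|_E$. (Concretely, take $Y$ the total space of $\Oo(-1)\oplus\Oo(-1)$ and $S$ one summand.) In that situation $\phi'|_{S_Z}$ contracts this curve to a single point of $C_{Y'}$, so it is not finite, and the decomposition of $Sing(S_{Y'})$ into a non-normal locus plus the image of $Sing(S_Z)$ does not apply.

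This missing case is exactly the paper's case (ii) ($C_Y\subset S$, $C_{Y'}\not\subset S_{Y'}$), and it genuinely requires a different argument: one computes $S\cdot C_Y=-S_{Y'}\cdot C_{Y'}=-1$, so $C_Y$ is a $(-1)$-curve on $S$ and $S_{Y'}$ is obtained from $S\cong S_Z$ by Castelnuovo contraction, hence is smooth at the image point. Your ``bidegree comparison'' does not detect this, and the coordinate analysis you propose for ``borderline behaviour'' (tangential or multiple intersections when $C_Y\not\subset S$) is aimed at the wrong phenomenon. A second, smaller omission: in the non-smooth case with $C_Y\subset S$, at points $P\in Sing(S)\cap C_Y$ one has $\mmul_P S\geq 2$, so the whole fiber $l_P\subset E$ lies in $S_Z$; thus $S_Z\cap E$ is a section \emph{plus} some fibers of $\phi|_E$, not just a section. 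The paper keeps track of both pieces when arguing that $Sing(S_{Y'})$ is either $Sing(S)_{Y'}$ or $Sing(S)_{Y'}\cup C_{Y'}$.
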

\begin{proof}
	Let $\pi:\tl{Y}=Bl_{C_Y}Y\rightarrow Y$ and $\pi':\tl{Y}=Bl_{C_{Y'}}Y'\rightarrow Y'$ be a common resolution of $Y\dashrightarrow Y'$.
	Let $E=exc(\pi)=exc(\pi')$. First assume that $S$ is smooth.
	\begin{enumerate}[(i)]
	\item Assume that $C_Y\not\subset S$. If $S.C_Y>1$ then $S_{Y'}$ is singular along $C_{Y'}$ by Lemma \ref{lati} and we have done.
		Assume that $S.C_Y=1$. We have that $S_{\tl{Y}}=Bl_{S\cap C_Y}S$ and $S_{Y'}\cong S_{\tl{Y}}$. Hence $S_{Y'}$ is smooth.
	\item Assume that $C_Y\subset S$ and $C_{Y'}\not\subset S_{Y'}$. In this case we have $S_{Y'}.C_{Y'}=1$ since otherwise $S$ won't be smooth.
		Now \[S\cong S_{\tl{Y}}=Bl_{S_{Y'}\cap C_{Y'}}S_{Y'}.\]
		Since $S.C_Y=-S_{Y'}.C_{Y'}=-1$, $C_Y$ is a $-1$-curve and $S_{Y'}$ is obtained by contracting this curve, so $S_{Y'}$ is also smooth.
	\item Assume that $C_Y\subset S$ and $C_{Y'}\subset S_{Y'}$. We have $\pi\st S=S_{\tl{Y}}+E$.
		Assume that $S_{Y'}$ is not singular along $C_{Y'}$, then we also have ${\pi'}\st S_{Y'}=S_{\tl{Y}}+E=\pi\st S$. This implies that
		$S.C_Y=S_{Y'}.C_{Y'}=0$. Let $\phi:Y\rightarrow U$ and $\phi':Y'\rightarrow U$ be the flopping contractions. Since $S$ is
		smooth, $S_U$ has only a $A_1$ singularity. Thus $S_{Y'}$ is smooth because $S_{Y'}\not\cong S_U$ and there are no intermediate varieties
		between a $A_1$ singularity and its minimal resolution.
	\end{enumerate}\par
	Now assume that $S$ is not smooth, $Sing(S)$ is pure of dimension one and $C_Y\not\subset Sing(S)$. Let $Sing(S)_{Y'}$ be the proper
	transform of $Sing(S)$ on $Y'$. We want to say that $Sing(S_{Y'})$ is either $Sing(S)_{Y'}$ or $C_{Y'}\cup Sing(S)_{Y'}$.
	For any $P\in Sing(S)\cap C_Y$, we have $\mmul_P S\geq 2$. Let $l_P=\pi\st P$. Since $C_Y\not\subset Sing(S)$,
	$\pi\st S=S_{\tl{Y}}+\lambda E$ for $\lambda=0$ or $1$. Since $\mmul_{l_P}E=1$, we have $l_P\subset S_{\tl{Y}}$. We may write
	\[S_{\tl{Y}}\cap E=\sum_{P\in Sing(S)\cap C_Y}m_Pl_P+\lambda\Gamma\] for some curve $\Gamma$ which maps bijectively to $C_Y$ via $\pi$.
	If $\sum_Pm_P>1$ or $\pi'(\Gamma)=C_{Y'}$ then $S_{Y'}$ is singular along $C_{Y'}$, so $Sing(S_{Y'})=C_{Y'}\cup Sing(S)_{Y'}$.
	Now assume that $\sum_Pm_P=1$ and $\pi'(\Gamma)$ is a point. Notice that in this case $\lambda=1$ or $P$ can not be a singular point of $S$.
	We have $Sing(S_{Y'})\cap C_{Y'}=\pi'(\Gamma)$. It follows that $Sing(S)_{Y'}\cap C_{Y'}=\pi'(\Gamma)$ and so $Sing(S_{Y'})=Sing(S)_{Y'}$.
\end{proof}

\begin{cov}\label{flipd}
	Let $Y_I\dashrightarrow Y_{I+1}$ be a flip. We say that the flip has a factorization $(\ast)$ if we have the following diagram
	\[\vc{\xymatrix{ & & Y_{\bar{I}} \ar@{-->}[r] \ar@{}[d]|{\vdots} & Y_{\bar{I}+1} \ar@{}[d]|{\vdots} \\
		& Y_{(I,1,0)} \ar@{-->}[r]\ar[d] & Y_{(I,1,1)} & Y_{(I,1,2)}\ar[d] \\
		Y_{(I,0)} \ar@{-->}[r]\ar[d] & Y_{(I,1)} & & Y_{(I,2)} \ar[d] \\
		Y_I & & & Y_{I+1} }}\] with $\bar{I}=(I,1,...,1,0)$, such that
	\[Y_{\bar{I}+1}\rightarrow\cdots Y_{(I,1,2)}\rightarrow Y_{(I,2)}\rightarrow Y_{I+1}\]
	is a sequence of $w$-morphisms or blowing-up smooth curves.
\end{cov}

\begin{lem}\label{ati1}
	Let $Y_I\dashrightarrow Y_{I+1}$ be a flip. Assume that $Y_I\dashrightarrow Y_{I+1}$ has a factorization $(\ast)$ as in
	Convention \ref{flipd} and every flop in the factorization is an Atiyah flops. If $S$ is a
	surface on $Y_{I+1}$ such that $S_{I'}$ is smooth in the smooth locus of $Y_{I'}$ for $I'=(I,1,..,1,2)$ or $\bar{I}$,
	then either $S_I$ has one-dimensional singularities, or $S_I$ is smooth in the smooth locus of $Y_I$.
\end{lem}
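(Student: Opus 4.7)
The plan is to trace the proper transform of $S$ along the zig-zag of $(\ast)$ from $Y_{I'}$ down to $Y_I$, arguing by induction on the recursive depth of the factorization, and preserving at every step the property \emph{``smooth in the smooth locus of the ambient variety, or with singular locus of pure dimension one''}.

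At each level the factorization consists of an Atiyah flop flanked by $w$-morphisms on the left and either $w$-morphisms or blow-ups of smooth curves on the right. Lemma \ref{cohF} is exactly what we need for the Atiyah flop step, provided that the flopping curve is not contained in the singular locus of the surface; since every Atiyah flopping curve sits in the smooth locus of its ambient variety, this hypothesis is automatic whenever the surface is smooth in the smooth locus, and I will verify inductively that the one-dimensional singular curves which appear lie inside previous exceptional loci and hence avoid subsequent flopping curves. For a downward morphism $\pi\colon Z'\to Z$ I argue in local coordinates. If $\pi$ is a $w$-morphism contracting an irreducible divisor $E'$ to a non-Gorenstein point $P\in Z$, then $\pi$ is an isomorphism away from the fibre over $P$, so $\pi_*T$ agrees with $T$ outside $P$, and any new singularities of $\pi_*T$ are supported at $P$, which lies in the singular locus of $Z$. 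If instead $\pi$ is the blow-down of a smooth curve $\Gamma$, then the new singularities of $\pi_*T$ lie along $\Gamma$ and are hence one-dimensional. In either case the desired property is preserved under $\pi_*$.

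Combining the two building blocks: when $I'=\bar I$ I descend the left chain of $(\ast)$, alternating Lemma \ref{cohF} for the Atiyah flops with the $w$-morphism analysis for the downward contractions, invoking the inductive hypothesis on each inner sub-flip. When $I'=(I,1,\ldots,1,2)$ I first pull $S_{I'}$ back up the right chain to $Y_{\bar I+1}$ (the property is preserved under inverse contractions by the same local argument), then cross the innermost Atiyah flop via Lemma \ref{cohF} to land on $Y_{\bar I}$, and finish as in the previous case. The main obstacle is the inductive bookkeeping: at every flop step one must maintain that the current singular locus of the proper transform is disjoint from the next flopping curve, so that Lemma \ref{cohF} continues to apply. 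This should follow because each newly created one-dimensional singular curve lives either in the image of a preceding $w$-morphism's exceptional divisor or along a previously blown-up smooth curve, both of which are disjoint from the smooth-locus Atiyah flopping curves encountered later in $(\ast)$.
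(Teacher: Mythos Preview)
Your overall strategy—descend the left side of $(\ast)$ via the alternation of $w$-morphisms and inverse Atiyah flops, invoking Lemma~\ref{cohF} at each flop—is exactly what the paper does. The treatment of $w$-morphisms is fine: pushing forward under a $w$-morphism $Y_{(J,0)}\to Y_{(J)}$ only creates new singularities of $S$ at the image point, which is non-Gorenstein, so ``smooth in the smooth locus'' is preserved.

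The weakness is in your verification of the hypothesis of Lemma~\ref{cohF} at each flop step. You argue that any one-dimensional singular curve produced along the way ``lives either in the image of a preceding $w$-morphism's exceptional divisor or along a previously blown-up smooth curve, both of which are disjoint from the smooth-locus Atiyah flopping curves encountered later''. This disjointness is not obvious: the exceptional locus of the $w$-morphism $Y_{(I,1^k,0)}\to Y_{(I,1^{k-1},1)}$ maps to the non-Gorenstein point $P$ on $Y_{(I,1^{k-1},1)}$, and the flopped curve of $Y_{(I,1^{k-1},0)}\dashrightarrow Y_{(I,1^{k-1},1)}$ may very well pass through $P$ (both curves lie in the exceptional divisor $F$ which is eventually contracted). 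Your own phrasing ``this should follow'' signals that the bookkeeping has not been carried out.

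The paper sidesteps this bookkeeping entirely with a one-line observation coming from the \emph{right} column of $(\ast)$. The flopped curve $C_{(I,1^{k-1},1)}\subset Y_{(I,1^{k-1},1)}$ has a proper transform on $Y_{(I,1^{k-1},2)}$ which lands in the smooth locus there (this is where the hypothesis that the right tower $Y_{\bar I}\to\cdots\to Y_{(I,2)}\to Y_{I+1}$ consists of $w$-morphisms and smooth-curve blow-ups is used). Since $S_{(I,1^{k-1},2)}$ is smooth in the smooth locus of $Y_{(I,1^{k-1},2)}$ by the running hypothesis on the right side, $S$ is smooth along that curve; pulling back across the flip $Y_{(I,1^{k-1},1)}\dashrightarrow Y_{(I,1^{k-1},2)}$, one gets directly that $C_{(I,1^{k-1},1)}\not\subset\operatorname{Sing}(S_{(I,1^{k-1},1)})$. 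This is the missing idea: rather than tracking where the singular curves of $S$ go, track where the flopped curve goes—namely to the right column, where $S$ is known to be good. Once this check is in place, Lemma~\ref{cohF} applies at every level and the descent proceeds exactly as you outlined.
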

\begin{proof}
	We know that $Y_{\bar{I}}\dashrightarrow Y_I$ can be factorize into a composition of Atiyah flops and $w$-morphism.
	Notice that the singular locus of $S_{(I,1,...,1,1)}$ do not contain the flopped curve of 
	$Y_{(I,1,...,1,0)}\dashrightarrow Y_{(I,1,...,1,1)}$ since the flopped curve appears on the smooth locus of $Y_{(I,1,...,1,2)}$.
	Lemma \ref{cohF} implies that either $S_{(I,1,...,1,0)}$ has one-dimensional singularities or $S_{(I,1,...,1,0)}$ is smooth
	in the smooth locus of $Y_{(I,1,...,1,0)}$. One can see that $S_{Y_I}$ satisfies the desired property.
\end{proof}
\begin{lem}\label{ati2}
	Let $Y_I\dashrightarrow Y_{I+1}$ be a flip such that $Y_{(I,0)}\dashrightarrow Y_{(I,1)}$ is a simple smooth flop.
	Let $F_{(I,0)}$ be the exceptional divisor of $Y_{(I,0)}\rightarrow Y_I$. If $F_{(I,1)}$ is smooth along the
	flopped curve of $Y_{(I,0)}\dashrightarrow Y_{(I,1)}$, then $Y_{(I,0)}\dashrightarrow Y_{(I,1)}$ is an Atiyah flop.
\end{lem}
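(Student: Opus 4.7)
The plan is to prove the contrapositive: assume the simple smooth flop $Y_{(I,0)}\dashrightarrow Y_{(I,1)}$ is not an Atiyah flop, and derive that $F_{(I,1)}$ must be singular along the flopped curve $C'$. By the Pinkham--Katz-Morrison classification recalled in Section \ref{spre}, such a flop has flopping curve $C$ with normal bundle $\mathcal{O}(0)\oplus\mathcal{O}(-2)$ (pagoda case) or $\mathcal{O}(1)\oplus\mathcal{O}(-3)$ ($D$/$E$ case). In all such cases the flop admits an explicit realization as a sequence of blow-ups of negative sections followed by an Atiyah flop performed deep inside the tower, and then symmetric blow-downs.

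First I would record the basic geometric setup. Since the $w$-morphism $Y_{(I,0)}\to Y_I$ contracts $F_{(I,0)}$ to a point $P$, while $C$ maps to a curve (the flipping curve of $Y_I\dashrightarrow Y_{I+1}$), we have $C\not\subset F_{(I,0)}$. By the remark after Theorem \ref{chf} the flipping curve passes through $P$, so $m:=F_{(I,0)}\cdot C$ is a strictly positive integer; moreover $F_{(I,0)}$, being a smooth divisor on a smooth threefold, meets $C$ transversally at these $m$ points.

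Next I would trace the proper transform through the factorization. Blowing up $C$ in $Y_{(I,0)}$ gives a smooth threefold $Z$ with exceptional divisor $E\cong\mathbb{F}_{a-b}$ and distinguished negative section $C_1$. Since $C\not\subset F_{(I,0)}$ we have $F_Z=\pi^*F_{(I,0)}$, so $F_Z\cap E$ consists of $m$ fibers of $E\to C$, and $F_Z\cdot C_1=m$ with $F_Z$ transverse to $C_1$. For non-Atiyah $C$ (i.e., $a-b\geq 2$) the factorization proceeds with further blow-ups of successive negative sections until an Atiyah flop is performed on some later curve in the tower; applying Lemma \ref{lati} at that step shows the proper transform of $F_{(I,0)}$ acquires multiplicity $m$ along the corresponding Atiyah-flopped curve, and this multiplicity descends under the subsequent blow-downs to a genuine singularity of $F_{(I,1)}$ along $C'$.

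The main obstacle is the careful multiplicity bookkeeping through the intermediate blow-ups and Atiyah flops, particularly for the $D$/$E$ types where the normal bundle sequence has length $\geq 3$ and involves several intermediate Atiyah flops. The critical point to verify in each case is that the multiplicity created at the Atiyah-flop step cannot be absorbed by the remaining blow-downs, because the Atiyah flop is performed at a curve distinct from (and at a higher level than) $C'$, so the contribution survives the final contractions of the Hirzebruch surfaces. It is precisely the inequality $a-b\geq 2$ in the non-Atiyah cases, reflecting that the intermediate ruled surfaces are never $\mathbb{F}_0$, that prevents the cancellation needed for smoothness of $F_{(I,1)}$ along $C'$.
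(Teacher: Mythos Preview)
Your approach is quite different from the paper's, and has real problems.

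The paper's proof is a five-line intersection-theory computation that uses the smoothness hypothesis on $F_{(I,1)}$ directly. Since the flopping curve $C_{(I,0)}$ maps to the flipping curve through the $w$-morphism center, $F_{(I,0)}\cdot C_{(I,0)}>0$, hence $F_{(I,1)}\cdot C'<0$ and $C'\subset F_{(I,1)}$. By the Hodge index theorem $(C')^2_{F_{(I,1)}}<0$. Adjunction on the smooth surface $F_{(I,1)}$ (this is where the hypothesis enters) gives $K_{F_{(I,1)}}\cdot C'=-2-(C')^2_{F_{(I,1)}}>-2$; but also $K_{F_{(I,1)}}\cdot C'=(K_{Y_{(I,1)}}+F_{(I,1)})\cdot C'=F_{(I,1)}\cdot C'$ since $K\cdot C'=0$. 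So $F_{(I,1)}\cdot C'=-1$, and the short exact sequence $0\to N_{C'/F_{(I,1)}}\to N_{C'/Y_{(I,1)}}\to N_{F_{(I,1)}/Y_{(I,1)}}|_{C'}\to 0$ forces $N_{C'/Y_{(I,1)}}=\Oo(-1)\oplus\Oo(-1)$.

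Your contrapositive route via explicit factorizations has two genuine gaps. First, for the $D$- and $E$-type simple smooth flops, the ``explicit realization as a sequence of blow-ups \ldots\ followed by an Atiyah flop \ldots\ and symmetric blow-downs'' that you invoke is exactly the content of Theorem~\ref{ssthm}, which is proved later in the paper \emph{using} this lemma (via Lemma~\ref{ati}). So the argument is circular for those cases. Second, you assert that $F_{(I,0)}$ is smooth and meets $C$ transversally at $m$ points; neither is given. $F_{(I,0)}$ is the exceptional divisor of a $w$-morphism over a possibly non-Gorenstein point of $Y_I$ and need not be smooth, and even a smooth Cartier divisor can meet a curve with higher local multiplicity. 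Without transversality, Lemma~\ref{lati}(1) does not apply at the Atiyah step, and the multiplicity bookkeeping you sketch --- which you yourself flag as ``the critical point to verify'' --- cannot get started.

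Even in the pagoda case where Reid's classical factorization is available independently, the argument would need substantially more care than you give it. The paper's direct computation sidesteps all of this: it needs no case analysis, no factorization of the flop, and no assumption on $F_{(I,0)}$.
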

\begin{proof}
	Let $C_{(I,1)}$ be the flopped curve on $Y_{(I,1)}$. We have $(C_{(I,1)})^2_{F_{(I,1)}}<0$ by the
	Hodge index theorem \cite[Lemma 3.40]{km}. Since $F_{(I,0)}.C_{(I,0)}>0$ where $C_{(I,0)}$ is the flopping curve, we have
	$F_{(I,1)}.C_{(I,1)}<0$. So 
	\[ 0>F_{(I,1)}.C_{(I,1)}=K_{F_{(I,1)}}.C_{(I,1)}
		=-2-(C_{(I,1)})^2_{F_{(I,1)}}>-2,\] which implies that
	$F_{(I,1)}.C_{(I,1)}=-1$. Hence the normal bundle of $Y_{(I,1)}$ along $C_{(I,1)}$ is
	$\Oo(-1)\oplus\Oo(-1)$ (cf. \cite[Remark 5.2]{re}) and so $Y_{(I,0)}\dashrightarrow Y_{(I,1)}$ is an Atiyah flop.
\end{proof}
\begin{lem}\label{ati}
	Let $Y_I\dashrightarrow Y_{I+1}$ be a flip such that $Y_{(I,0)}\dashrightarrow Y_{(I,1)}$ is a smooth flop.
	Assume that both $Y_I$ and $Y_{I+1}$ has only $cA/r$ singularities of the form
	\[ (xy+z^r+f(z,u)=0)\subset\A^4_{(x,y,z,u)}/\frac{1}{r}(\alpha,-\alpha,1,r),\] $Y_I\dashrightarrow Y_{I+1}$ has a factorization
	$(\ast)$ as in Convention \ref{flipd} and $Y_{(I,I',0)}\dashrightarrow Y_{(I,I',1)}$
	is either an isomorphism or an Atiyah flop for all possible non-empty $I'=(1,...,1)$.  
	Then $Y_{(I,0)}\dashrightarrow Y_{(I,1)}$ is an Atiyah flop.
\end{lem}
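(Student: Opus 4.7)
The plan is to apply Lemma \ref{ati2}: it suffices to show that $F_{(I,1)}$, the proper transform on $Y_{(I,1)}$ of the exceptional divisor $F_{(I,0)}$ of the $w$-morphism $Y_{(I,0)}\to Y_I$, is smooth along the flopped curve $C_{(I,1)}$. To get started I would exploit the very explicit form of the $cA/r$ singularity of $Y_I$, namely $(xy+z^r+f(z,u)=0)/\frac{1}{r}(\alpha,-\alpha,1,r)$, to write down the $w$-morphism over this point as a specific weighted blow-up, and to describe $F_{(I,0)}$ concretely as a weighted projective surface whose singularities are cyclic quotients supported away from the generic point of $C_{(I,0)}$. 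A symmetric description holds for $Y_{(I,2)}\to Y_{I+1}$.

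Next I would track the proper transforms of $F$ up the tower of factorization $(\ast)$. On the left chain $Y_{(I,0)}\leftarrow Y_{(I,1,0)}\leftarrow\cdots$ and the right chain $Y_{(I,2)}\leftarrow Y_{(I,1,2)}\leftarrow\cdots$, each step is a $w$-morphism or a smooth blow-up, so the proper transform of $F$ on each level is well controlled. At every horizontal flop $Y_{(I,I',0)}\dashrightarrow Y_{(I,I',1)}$ with non-empty $I'=(1,\ldots,1)$, the flop is either an isomorphism or an Atiyah flop by hypothesis, so Lemma \ref{cohF} preserves the property ``smooth or pure one-dimensional singularities not containing the flopping curve'' across the flop.

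Having pushed the surface all the way up to the common top level $Y_{\bar I}$, I would use the fact that $Y_{\bar I}$ has only mild singularities coming from the factorization $(\ast)$, together with the explicit geometry of the $cA/r$ $w$-morphism from Step 1, to conclude that on $Y_{\bar I}$ the surface is smooth in the smooth locus in a neighborhood of the relevant flopping/flopped curves. Descending back to $Y_{(I,1)}$ through the chain and applying Lemma \ref{cohF} at each Atiyah flop (and directly on the $w$-morphisms/smooth blow-ups), we obtain that $F_{(I,1)}$ has at worst one-dimensional singularities and that this singular locus avoids $C_{(I,1)}$. Lemma \ref{ati2} then yields the conclusion.

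The main obstacle I expect is the bookkeeping in the descent step: Lemma \ref{cohF} only preserves ``smooth or one-dimensional singularities,'' so we must verify at each Atiyah flop that any one-dimensional singular locus created by the $w$-morphisms or blow-ups is transverse to the flopping curve at that level. This is where the strong hypotheses are essential: the specific form of the $cA/r$ singularities of $Y_I,Y_{I+1}$ pins down the weighted-blow-up data and forces the position of $C_{(I,0)}$ inside $F_{(I,0)}$, while the assumption that every higher flop is \emph{Atiyah} (as opposed to pagoda) gives the $\Oo(-1)\oplus\Oo(-1)$ normal bundle needed for Lemma \ref{cohF} to propagate smoothness rather than merely boundedness of singular dimensions.
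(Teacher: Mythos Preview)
Your outline is right in spirit: the target is Lemma~\ref{ati2}, and the tool for propagating smoothness is Lemma~\ref{cohF} (packaged as Lemma~\ref{ati1}). But the step you flag as ``the main obstacle'' is a real gap, and you have not closed it. Lemma~\ref{cohF} only tells you that after an Atiyah flop the singular locus of the surface is either empty or pure of dimension one; it does \emph{not} tell you that this one-dimensional locus avoids the flopped curve. So your descent argument, as stated, cannot conclude that $\mathrm{Sing}(F_{(I,1)})$ misses $C_{(I,1)}$, and the transversality checks you promise to do ``at each level'' are never carried out.

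The paper sidesteps this bookkeeping entirely with a short sandwich argument. First, Lemma~\ref{ati1} (applied once to the surface $F_{(I,2)}=\mathrm{exc}(Y_{(I,2)}\to Y_{I+1})$, which is smooth in the smooth locus by the explicit $cA/r$ hypothesis on $Y_{I+1}$) gives the dichotomy: either $F_{(I,1)}$ is smooth in the smooth locus of $Y_{(I,1)}$, or $\mathrm{Sing}(F_{(I,1)})$ is one-dimensional. The paper then rules out the second alternative globally, not just along $C_{(I,1)}$: since $Y_{(I,0)}\dashrightarrow Y_{(I,1)}$ and $Y_{(I,1)}\dashrightarrow Y_{(I,2)}$ are both small, every curve on $Y_{(I,1)}$ already lives on $Y_{(I,0)}$ or on $Y_{(I,2)}$. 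But $F_{(I,0)}$ and $F_{(I,2)}$ are each smooth in the smooth locus of their ambient variety (again by the $cA/r$ hypothesis on $Y_I$ and $Y_{I+1}$), so neither carries a one-dimensional singular locus. Hence $F_{(I,1)}$ has no singular curve at all, and Lemma~\ref{ati2} applies. The point you were missing is that the hypothesis on $Y_I$ (not just $Y_{I+1}$) gives you control of $F$ on the \emph{other} side of the flop, and comparing both sides eliminates one-dimensional singularities outright rather than trying to locate them.
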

\begin{proof}
	Let $F_{(I,2)}$ be the exceptional divisor of $Y_{(I,2)}\rightarrow Y_{I+1}$. By the assumption that $Y_{I+1}$
	has only simple $cA/r$ singularities one can verify that $F_{(I,2)}\subset Y_{(I,2)}$ satisfies the condition in Lemma \ref{ati1}
	(applied to the flip $Y_{(I,1)}\dashrightarrow Y_{(I,2)}$). Hence 
	either $F_{(I,1)}$ has one-dimensional singularities, or it is smooth in the smooth locus of $Y_{(I,1)}$. However it can not
	has one-dimensional singularities since every curve on $Y_{(I,1)}$ appears either on $Y_{(I,0)}$ or $Y_{(I,2)}$, and
	$F_{(I,i)}$ is smooth in the smooth locus of $Y_{(I,i)}$ for $i=0$, $2$ by the assumption about the singularities of
	$Y_I$ and $Y_{I+1}$. Hence $F_{(I,1)}$ is smooth in the smooth locus of $Y_{(I,1)}$ and now
	$Y_{(I,0)}\dashrightarrow Y_{(I,1)}$ is an Atiyah flop by Lemma \ref{ati2}.
\end{proof}

\begin{lem}\label{lflip}
	Assume that $dep(Y_I)=dep(Y_{I+1})+1$. Then $Y_{(I,0)}\dashrightarrow Y_{(I,1)}$ is a flop.
	Either $Y_{(I,1)}\rightarrow Y_{I+1}$ is a divisorial contraction to a curve which do not change the depth,
	or $Y_{(I,1)}\dashrightarrow Y_{(I,2)}$ is a flip with $dep(Y_{(I,1)})=dep(Y_{(I,2)})+1$ and
	$Y_{(I,2)}\rightarrow Y_{I+1}$ is a $w$-morphism.
\end{lem}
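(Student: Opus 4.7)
The plan is to combine a depth count based on Proposition \ref{chfp} with the structural features of Chen-Hacon's construction in Theorem \ref{chf}. Set $d:=dep(Y_{I+1})$, so that $dep(Y_I)=d+1$. Standard properties of $w$-morphisms together with Proposition \ref{chfp}(1) give $dep(Y_{(I,0)})=d$. Along the remainder of the factorization
\[ Y_{(I,0)}\dashrightarrow Y_{(I,1)}\dashrightarrow\cdots\dashrightarrow Y_{(I,k_I)}\to Y_{I+1}, \]
each flop preserves depth (since the local singularities are isomorphic), each flip strictly drops depth by Proposition \ref{chfp}(3), and the final divisorial contraction preserves depth when contracting to a curve, or drops depth by at most one when contracting to a point.

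Since $dep(Y_{(I,0)})=dep(Y_{I+1})=d$, the signed total depth change along the factorization must vanish. A direct count then bounds the total number of flips by one; combined with the Chen-Hacon rule that only the first step can be a flop, this leaves four depth-consistent configurations:
\begin{enumerate}[(i)]
\item $k_I=1$, first step a flip, final divisorial contraction to a point, $dep(Y_{(I,1)})=d-1$;
\item $k_I=1$, first step a flop, final divisorial contraction to a curve;
\item $k_I=1$, first step a flop, final divisorial contraction to a point with $dep(Y_{(I,1)})=d$;
\item $k_I=2$, first step a flop, second step a flip dropping depth by one, and final divisorial contraction to a point dropping depth by one.
\end{enumerate}
Configurations (ii) and (iv) match exactly cases (a) and (b) in the lemma, so the task reduces to excluding (i) and (iii).

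The main obstacle will be ruling out (i) and (iii). The key structural input is the remark following Theorem \ref{chf}: in the first-step flop case the flopped curve images to a flipping curve of $Y_I\dashrightarrow Y_{I+1}$, whereas in the first-step flip case the flipping curve of $Y_{(I,0)}\dashrightarrow Y_{(I,1)}$ lies inside the exceptional divisor $E_0=exc(Y_{(I,0)}\to Y_I)$ of the $w$-morphism. In Chen-Hacon's construction the final divisorial contraction contracts the proper transform of $E_0$. Passing to a general elephant $H_{U_I}$ and using the partial resolutions provided by Lemma \ref{ge1} on each intermediate surface, I would compare the positions of the flipping/flipped curves and of the exceptional curve of the final contraction on the resulting Du Val section; this comparison should force the incompatibility, since in (i) the proper transform of $E_0$ cannot be contracted onto a point that simultaneously matches the flipped locus on $H_{U_I}$, and in (iii) the comparison forces the final contraction to be to a curve rather than to a point.

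Once (i) and (iii) are ruled out, configuration (ii) gives case (a) directly, and for configuration (iv) the one remaining point is that the final contraction $Y_{(I,2)}\to Y_{I+1}$ is not merely a divisorial contraction to a point but is actually a $w$-morphism. This follows because it contracts the proper transform of $E_0$, whose discrepancy $1/r_P$ is preserved through the intermediate flips and flops away from the contracted loci; by the definition in Section \ref{sch}, it is therefore a $w$-morphism, completing case (b).
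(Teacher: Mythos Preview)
Your depth-counting framework and the enumeration of configurations (i)--(iv) are correct, but the argument has genuine gaps where it matters.

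\textbf{Ruling out (i) and (iii).} Your proposed general-elephant comparison is only a sketch: you say the Du Val picture ``should force the incompatibility'' but never carry this out, and it is not clear it can be made to work. Worse, one of your structural inputs is false: in the first-step-flip case the flipping curve $C_{(I,0)}$ is \emph{not} contained in $E_0$. In Chen--Hacon's construction $C_{(I,0)}$ projects onto a flipping curve of $Y_I\dashrightarrow Y_{I+1}$ (it passes through the center of the $w$-morphism but is not contracted by $Y_{(I,0)}\to Y_I$), so $C_{(I,0)}\not\subset E_0$ and in fact $E_0\cdot C_{(I,0)}>0$. The paper exploits exactly this: writing $F_{(I,0)}=E_0$, one has $F_{(I,0)}\cdot C_{(I,0)}>0$, hence after the flip/flop $F_{(I,1)}\cdot C_{(I,1)}<0$, so $C_{(I,1)}\subset F_{(I,1)}$. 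If $k_I=1$ and $F_{(I,1)}$ is contracted to a point, then $K_{Y_{(I,1)}}=\pi^*K_{Y_{I+1}}+aF_{(I,1)}$ with $a>0$, giving $K_{Y_{(I,1)}}\cdot C_{(I,1)}=aF_{(I,1)}\cdot C_{(I,1)}<0$; this contradicts $K\cdot C_{(I,1)}>0$ (flipped curve) in case (i) and $K\cdot C_{(I,1)}=0$ (flopped curve) in case (iii). This intersection-number argument is short and direct; your elephant route, even if it could be completed, is substantially more involved.

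\textbf{The $w$-morphism claim in (iv).} Your argument here is incorrect. The discrepancy $a(E_0,Y_I)=1/r_P$ is measured over $Y_I$, but the final contraction is $Y_{(I,2)}\to Y_{I+1}$, and you need $a(E_0,Y_{I+1})$. These are discrepancies over different varieties related by the flip $Y_I\dashrightarrow Y_{I+1}$, and there is no reason for them to coincide (indeed, the negativity lemma only gives $a(E_0,Y_{I+1})\geq a(E_0,Y_I)$). The flips and flops you mention occur among the $Y_{(I,j)}$, on each of which $E_0$ is an honest divisor with discrepancy $0$; this says nothing about $a(E_0,Y_{I+1})$. The paper instead argues via depth: once (i) and (iii) are excluded and $k_I=2$, the flip forces $dep(Y_{(I,2)})\leq d-1$, while Proposition~\ref{chfp} applied to $Y_{(I,2)}\to Y_{I+1}$ (necessarily to a point, since contraction to a curve would give $dep(Y_{(I,2)})\geq d$) gives $dep(Y_{(I,2)})\geq d-1$, whence equality.
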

\begin{proof}
	Let $C_{(I,0)}$ be the flipping/flopping curve on $Y_{(I,0)}$ and let $F_{(I,0)}$ be the exceptional
	divisor of $Y_{(I,0)}\rightarrow Y_I$. Then we have $F_{(I,0)}.C_{(I,0)}>0$, hence $F_{(I,1)}.C_{(I,1)}<0$,
	where $C_{(I,1)}$ denotes the flipped/flopped curve on $Y_{(I,1)}$. This implies that $C_{(I,1)}\subset F_{(I,1)}$.\par
	Assume that $Y_{(I,0)}\dashrightarrow Y_{(I,1)}$ is a flip. Then by Proposition \ref{ied} we have 
	\[dep(Y_{(I,1)})\leq dep(Y_{(I,0)})-1=dep(Y_I)-2=dep(Y_{I+1})-1.\]
	This implies that $k_I=1$ and $Y_{(I,1)}\rightarrow Y_{I+1}$ is a $w$-morphism by Proposition \ref{ied}. Since $F_{(I,1)}$ is contracted
	to a point on $Y_{I+1}$, one can see that $K_{Y_{(I,1)}}.C_{(I,1)}<0$, this leads to a contradiction. Hence
	$Y_{(I,0)}\dashrightarrow Y_{(I,1)}$ is a flop.\par
	If $k_I=1$, then $Y_{(I,1)}\rightarrow Y_{I+1}$ is a divisorial contraction to a curve because we have
	$exp(Y_{(I,1)}\rightarrow U_I)=F_{Y_{(I,1)}}$. One can see that $dep(Y_{(I,1)})=dep(Y_I)-1=dep(Y_{I+1})$. If $k_I>1$, then $k_I$ should be
	$2$ and we have $dep(Y_{(I,2)})=dep(Y_{I+1})-1$. This proves the last statement.
\end{proof}

\begin{lem}\label{flopness}
	Assume that $Y_I\dashrightarrow Y_{I+1}$ is a flip such that the flipping curve passes through only one singular point. If
	$Y_{(I,0)}\dashrightarrow Y_{(I,1)}$ is a flop, then it is a Gorenstein flop. 
\end{lem}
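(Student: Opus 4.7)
The plan is to show that $Y_{(I,0)}$ is smooth along the flopping curve of the simple flop $Y_{(I,0)}\dashrightarrow Y_{(I,1)}$. By Lemma~\ref{sm} this flop is simple, so there is a unique flopping curve $C$ on $Y_{(I,0)}$. By the remark following Theorem~\ref{chf}, the image of $C$ under the $w$-morphism $h\colon Y_{(I,0)}\to Y_I$ is the flipping curve $C_I$ of $Y_I\dashrightarrow Y_{I+1}$. Let $E=\mathrm{exc}(h)$ and $P=h(E)$; by hypothesis $P$ is the only singular point of $Y_I$ lying on $C_I$. Since $h$ is an isomorphism outside $E$ and $Y_I$ is smooth along $C_I\setminus\{P\}$, any singular point of $Y_{(I,0)}$ on $C$ must lie in $C\cap E$.

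Next, I would use $GorE(Y)=-\infty$ to restrict the possible singularities. By Proposition~\ref{geh}(1),
\[ GorE(Y_{(I,0)}/Y_I)\leq GorE(P)=-\infty, \]
so every singular point of $Y_{(I,0)}$ on $E$ is non-Gorenstein; combined with the hypothesis that $Y_I$ itself has no Gorenstein singular points, the variety $Y_{(I,0)}$ has no Gorenstein singular points anywhere. Hence it suffices to rule out a non-Gorenstein point $Q\in C\cap E$.

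For this I would pass to the general elephant. Choose a general $H_{U_I}\in|-K_{U_I}|$; by Lemma~\ref{ge} the induced maps form a chain of partial resolutions of Du~Val surfaces
\[ H_{Y_{(I,0)}}\to H_{Y_I}\to H_{U_I}, \]
and $H_{Y_{(I,0)}}\in|-K_{Y_{(I,0)}}|$ passes through every non-Gorenstein point of $Y_{(I,0)}$. The flipping curve $C_I\subset H_{Y_I}$ is a smooth rational curve meeting $\mathrm{Sing}(H_{Y_I})$ only at $P$. Any hypothetical non-Gorenstein $Q$ on $C$ would appear as a Du~Val singularity of $H_{Y_{(I,0)}}$ lying on both the exceptional chain of $H_{Y_{(I,0)}}\to H_{Y_I}$ and the proper transform of $C_I$; ruling out such a $Q$ therefore reduces to a statement about partial resolutions of Du~Val surfaces induced by $w$-morphisms at non-Gorenstein terminal points with $GorE=-\infty$.

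The hard part is this last reduction: one must verify, case by case over the classification of such $w$-morphisms, that the proper transform of any smooth rational curve through the Du~Val point $P\in H_{Y_I}$ meets the new exceptional chain at a smooth point of $H_{Y_{(I,0)}}$. The leverage comes from the rigidity imposed by $C_I$ being a \emph{flipping} curve: the numerical data of the flipping contraction $Y_I\to U_I$ pins down enough of the local geometry at $P$ to force the required transversality uniformly across the list of possible $(P, C_I)$ pairs.
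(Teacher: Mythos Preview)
Your reduction is sound up to the point where you pass to the general elephant, but there are two genuine gaps after that.

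First, you assert without argument that the flipping curve satisfies $C_I\subset H_{Y_I}$. This is not automatic for a general $H_{U_I}\in|-K_{U_I}|$: in Mori's classification of extremal neighbourhoods the flipping curve lies in the general elephant only for certain types. The paper derives this containment from the contradiction hypothesis: if the flopping curve $C_{(I,0)}$ meets a non-Gorenstein point, then $H_{(I,0)}\in|-K_{Y_{(I,0)}}|$ meets $C_{(I,0)}$; since $H_{(I,0)}.C_{(I,0)}=0$ one gets $C_{(I,0)}\subset H_{(I,0)}$, and pushing down gives $C_I\subset H_I$.

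Second, and more seriously, you propose to finish by a case-by-case check over all $w$-morphisms at non-Gorenstein terminal points with $GorE=-\infty$, verifying that the strict transform of $C_I$ misses the non-Gorenstein locus. You do not carry this out, and it is not clear it is tractable in that generality. The paper avoids it entirely by invoking Mori's classification \cite[Theorem~2.2]{km2}: the two facts $C_I\subset H_I$ and ``$C_I$ meets only one singular point'' force the flip to be of type~$IC$. This pins down $P$ as a cyclic quotient point and fixes the Dynkin diagram of $H_{U_I}$ as $D_m$ with a specified marked vertex. The non-$\Q$-factorial point $P_{(I,0)}$ of $U_{(I,0)}$ then has general elephant $A_i$ (with $i\in\{3,m-1\}$) or $D_j$ ($4\le j\le m-1$), and each case is eliminated by a one-line index comparison: in the $A_i$ case $P_{(I,0)}$ would have index $i$ but cannot be cyclic, while a non-cyclic index-$i$ point has elephant at least $A_{2i-1}$; in the $D_j$ case the only numerical possibility is $j=5$ with $P_{(I,0)}$ of type $cAx/4$, ruled out by counting discrepancy-$\tfrac12$ divisors.

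So the missing idea is the type-$IC$ constraint. Without it you face the full list of terminal $(P,C_I)$ pairs; with it, the singularity type and Dynkin data are fixed and the contradiction is a two-case check rather than the open-ended verification you sketch.
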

\begin{proof}
	Assume that then the flopping curve $C_{(I,0)}$ passes through a non-Gorenstein point. Let $H_I\in|-K_{Y_I}|$ be a
	general elephant of the flipping contraction $Y_I\rightarrow U_I$, then $H_{(I,0)}\in |-K_{Y_{(I,0)}}|$ by \cite[Lemma 2.7 (2)]{ch}.
	Since $C_{(I,0)}$ contains a non-Gorenstein point, $H_{(I,0)}$ intersects $C_{(I,0)}$ non-trivially. Since $H_{(I,0)}.C_{(I,0)}=0$,
	$H_{(I,0)}$ contains $C_{(I,0)}$. This implies that $H_I$ contains $C_I$ where $C_I$ is the image of $C_{(I,0)}$ on $Y_I$. Notice that
	$C_I$ is a flipping curve of $Y_I\dashrightarrow Y_{I+1}$ by Remark \ref{flpc}. Since $C_I$ passes through only one singular point,
	the flip $Y_I\dashrightarrow Y_{I+1}$ is of type $IC$ in Koll\'{a}r-Mori's classification
	\cite[Theoren 2.2]{km2}. It follows that $Y_I$ has only a cyclic quotient singular point over $U_I$, and $Y_{(I,0)}$ also has only a cyclic
	quotient singular point over $U_{(I,0)}$ by Remark \ref{car}. According to \cite[Theorem 2.2.2]{km2}, the Dynkin diagram of the
	$H_{U_I}$ is of type $D_m$ 
	\[ \vc{\xymatrix@R=0.2cm@C=0.2cm{ & & & \circ \ar@{-}[d] & 
		\\ \circ \ar@{-}[r] & \cdots \ar@{-}[r] & \circ \ar@{-}[r] & \circ \ar@{-}[r] & \bullet } }.\] 
	Let $P_{(I,0)}$ be the non-$\Q$-factorial point of $U_{(I,0)}$. It follows that near $P_{(I,0)}$ we have $H_{U_{(I,0)}}$ is of type
	$A_i$ for $i=3$ or $m-1$, or $D_j$ for some $4\leq j\leq m-1$. We will prove that any of those cases can not happen.
	\begin{enumerate}[(i)]
	\item $H_{U_{(I,0)}}$ is of type $A_i$ near $P_{(I,0)}$. Then the flopping curve of $Y_{(I,0)}\dashrightarrow Y_{(I,1)}$
		contains a cyclic quotient point of index $i$, so $P_{(I,0)}$ is of index $i$. One can easily see that this can not happen because
		$P_{(I,0)}$ can not be a cyclic quotient point, and the general elephant of a non-cyclic-quotient index $i$ point has at least
		$A_{2i+1}$ singularities (\cite[(6.4)]{re2}).
	\item $H_{U_{(I,0)}}$ is of type $D_j$ near $P_{(I,0)}$. In this case the flopping curve of $Y_{(I,0)}\dashrightarrow Y_{(I,1)}$
		contains a cyclic quotient point of index $j$ and hence $P_{(I,0)}$ is also an index $j$ point. This means that $j=4$ and 
		$P_{(I,0)}$ is a $cAx/4$ point. However, the general elephant of a $cAx/4$ point has at least $D_5$ singularities (\cite[(6.4)]{re2}).
		Hence this case won't happen.
	\end{enumerate}	 
\end{proof}

\begin{lem}\label{pago}
	Assume that $Y_I\dashrightarrow Y_{I+1}$ is a flip such that the flipping curve passes through only one singular point $P$ with
	$\gd(P\in X)=0$ and $dep(P\in X)=1$. Then $Y_{(I,0)}\dashrightarrow Y_{(I,1)}$ is an Atiyah flop and
	$Y_{(I,1)}\rightarrow Y_{I+1}$ is a blowing-up a smooth curve.\par
	Moreover, we have $K_{Y_{I+1}}.C_{I+1}=1$ if $C_{I+1}$ is the flipped curve.
\end{lem}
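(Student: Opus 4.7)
The plan is to use depth monotonicity to show that the Chen--Hacon factorization of this flip collapses to a single flop followed by a single divisorial contraction, and then to use a discrepancy comparison across the flip to pin down the structure of the relevant divisors.

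First I establish the shape of the factorization. By the remark on depth-$1$ points, the lone singularity of $Y_I$ is a $\frac{1}{2}(1,1,1)$ point on the flipping curve; the $w$-morphism $Y_{(I,0)}\to Y_I$ is the weighted blow-up of this point, so $Y_{(I,0)}$ is smooth with exceptional divisor $F_{(I,0)}\cong\mathbb{P}^2$. Proposition \ref{chfp} forces $dep(Y_{I+1})=0$, i.e.\ $Y_{I+1}$ is Gorenstein. Lemma \ref{lflip} then gives that $Y_{(I,0)}\dashrightarrow Y_{(I,1)}$ is a flop, and Lemma \ref{sm} combined with the smoothness of $Y_{(I,0)}$ makes it a simple smooth flop. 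The same lemma offers two continuations; the one involving a further flip $Y_{(I,1)}\dashrightarrow Y_{(I,2)}$ is ruled out since it would force $dep(Y_{(I,2)})=-1$. So $Y_{(I,1)}\to Y_{I+1}$ is a divisorial contraction to a curve $C_{I+1}$, smooth by Corollary \ref{lci}. Since $Y_{(I,1)}$ is smooth and $Y_{I+1}$ is Gorenstein, the Cutkosky-type classification of Gorenstein divisorial contractions from smooth threefolds to smooth curves gives that $Y_{I+1}$ is smooth along $C_{I+1}$ and the map is the blow-up of $C_{I+1}$, which proves the second part of the lemma.

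Next, to show the flop is Atiyah, I argue that the proper transform $F_{(I,1)}$ of $F_{(I,0)}$ on $Y_{(I,1)}$ coincides with the exceptional divisor of this blow-up. The discrepancy of the valuation $F_{(I,0)}$ over $Y_I$ is $\frac{1}{2}$; monotonicity of discrepancies across a terminal $K$-flip combined with $Y_{I+1}$ being Gorenstein forces $a(F_{(I,0)},Y_{I+1})\in\mathbb{Z}_{\geq 1}$, so $F_{(I,1)}$ is exceptional over $Y_{I+1}$. Since $Y_{(I,1)}\to Y_{I+1}$ contracts a unique irreducible divisor, $F_{(I,1)}$ must be that divisor, which is a Hirzebruch surface $\mathbb{F}_k$ -- in particular smooth along $C_{(I,1)}$ -- so Lemma \ref{ati2} lets me conclude that the flop is Atiyah.

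For the intersection number $K_{Y_{I+1}}\cdot C_{I+1}=1$, I trace the Atiyah flop. From $K_{Y_{(I,0)}}=\pi^{\ast}K_{Y_I}+\frac{1}{2}F_{(I,0)}$ and $K_{Y_{(I,0)}}\cdot\tilde{C}=0$ I get $F_{(I,0)}\cdot\tilde{C}=-2K_{Y_I}\cdot C_I$, a positive integer; the blow-up/blow-down description of the Atiyah flop shows that $F_{(I,1)}$ is obtained from $\mathbb{P}^2$ by blowing up $F_{(I,0)}\cdot\tilde{C}$ intersection points and identifying the resulting exceptional curves, which can produce a Hirzebruch surface only when the count is $1$. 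Hence $F_{(I,0)}\cdot\tilde{C}=1$, $F_{(I,1)}\cong\mathbb{F}_1$, and $C_{(I,1)}$ appears as its $(-1)$-section. Adjunction on $\mathbb{F}_1$ applied to $C_{(I,1)}$ together with $K_{Y_{(I,1)}}\cdot C_{(I,1)}=0$ yields $F_{(I,1)}\cdot C_{(I,1)}=-1$, and the blow-up formula $K_{Y_{(I,1)}}={\pi'}^{\ast}K_{Y_{I+1}}+F_{(I,1)}$ then gives $K_{Y_{I+1}}\cdot C_{I+1}=1$. The main obstacle I anticipate is the rigorous identification $F_{(I,1)}\cong\mathbb{F}_1$; if the Hirzebruch-surface argument is not watertight, a fallback is to compute $K_{Y_I}\cdot C_I=-\frac{1}{2}$ directly from Mori's classification of index-$2$ flipping extremal rays.
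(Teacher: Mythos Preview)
Your argument is correct and follows the same architecture as the paper's proof: establish that the Chen--Hacon factorization here is a single smooth flop followed by a divisorial contraction to a smooth curve, identify $F_{(I,1)}$ with the exceptional divisor of that contraction (hence smooth along $C_{(I,1)}$), and invoke Lemma~\ref{ati2}. The paper is more direct in two places. First, rather than routing through Lemma~\ref{lflip} and depth-counting, it simply notes that a smooth threefold admits no flipping contraction, so $Y_{(I,0)}\dashrightarrow Y_{(I,1)}$ is a flop, and cites \cite[Remark~3.4]{ch} for the blow-up structure of $Y_{(I,1)}\to Y_{I+1}$. Second---and this is where you do unnecessary work---for $K_{Y_{I+1}}\cdot C_{I+1}=1$ the paper just points back to the inequality chain inside the proof of Lemma~\ref{ati2}: once $F_{(I,1)}$ is smooth along $C_{(I,1)}$, one has $0>F_{(I,1)}\cdot C_{(I,1)}=K_{F_{(I,1)}}\cdot C_{(I,1)}=-2-(C_{(I,1)})^2_{F_{(I,1)}}>-2$, forcing $F_{(I,1)}\cdot C_{(I,1)}=-1$, and then the blow-up formula finishes. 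Your detour through identifying $F_{(I,1)}\cong\mathbb{F}_1$ is not needed, and the step ``blowing up $m$ points and identifying the resulting exceptional curves'' relies on a transversality you have not checked; you can safely drop it since you have already invoked Lemma~\ref{ati2}. Likewise, the discrepancy argument that $F_{(I,1)}$ is exceptional over $Y_{I+1}$ is valid but superfluous: in the Chen--Hacon diagram the divisor extracted by $Y_{(I,0)}\to Y_I$ and the divisor contracted by $Y_{(I,k_I)}\to Y_{I+1}$ are the same valuation by construction, since all intermediate maps are small.
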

\begin{proof}
	Note that $gdep(P\in X)=1$, hence $Y_{(I,0)}$ is smooth. Since there are no flipping contraction begin with a smooth variety, we have that 
	$Y_{(I,0)}\dashrightarrow Y_{(I,1)}$ is a flop and $Y_{(I,1)}\rightarrow Y_{I+1}$ is a blowing-up a smooth curve
	by \cite[Remark 3.4]{ch}. This flop is an Atiyah flop by Lemma \ref{ati}.\par
	Now we know that $F_{(I,1)}.C_{(I,1)}=-1$ by the computation in Lemma \ref{ati2}, hence $K_{Y_{I+1}}.C_{I+1}=1$.
\end{proof}
\begin{lem}\label{3flip}
	Assume that $Y_I\dashrightarrow Y_{I+1}$ is a flip such that the flipping curve passes through only one singular point $P$ with
	$\gd(P\in X)=0$ and $dep(P\in X)=2$. Then we have one of the following factorization.
	\begin{enumerate}[(1)]
	\item \[\vc{\xymatrix{ & Y_{(I,1,0)} \ar@{-->}[r] \ar[d]_w & Y_{(I,1,1)} \ar[d]^c \\
			 Y_{(I,0)} \ar@{-->}[r] \ar[d]_w  & Y_{(I,1)} & Y_{(I,2)} \ar[d]^w \\ Y_I & & Y_{I+1}}}.\]
		$Y_{I+1}$ has a $\frac{1}{2}(1,1,1)$ singularity. 
	\item \[\vc{\xymatrix{ Y_{(I,0,0)} \ar@{-->}[r] \ar[d]_w & Y_{(I,0,1)} \ar[d]^c \\
			 Y_{(I,0)} \ar[d]_w  & Y_{(I,1)} \ar[d]^c \\ Y_I & Y_{I+1}}}.\]
		$Y_{I+1}$ is smooth and $a(exc(Y_{(I,0,1)}\rightarrow Y_{(I,1)}),Y_{I+1})=2$. 
	\item \[\vc{\xymatrix{ & Y_{(I,1,0)} \ar@{-->}[r] \ar[d]_w & Y_{(I,1,1)} \ar[d]^c \\
			 Y_{(I,0)} \ar@{-->}[r] \ar[d]_w  & Y_{(I,1)} & Y_{(I,2)} \ar[d] \\ Y_I & & Y_{I+1}}}.\]
		$Y_{I+1}$ is smooth and $Y_{(I,2)}\rightarrow Y_{I+1}$ is a blowing-up a smooth curve or a point. 
	\end{enumerate}
	Here every dash map is an Atiyah flop.\par
	Moreover, if the singular point on $Y_I$ is of type $cA/2$, then we are in the case (1) or (3).
\end{lem}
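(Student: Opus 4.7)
The plan is to apply Theorem \ref{chf} to obtain a Chen--Hacon chain
\[Y_I \leftarrow Y_{(I,0)} \dashrightarrow Y_{(I,1)} \dashrightarrow \cdots \dashrightarrow Y_{(I,k_I)} \rightarrow Y_{I+1},\]
with $Y_{(I,0)}\rightarrow Y_I$ a $w$-morphism and $Y_{(I,0)}\dashrightarrow Y_{(I,1)}$ a flop or a flip. By the remark in Section \ref{sch}, the unique singular point of $Y_I$ is either $\frac{1}{3}(1,2,1)$ or the $cA/2$ point $(xy+z^2+u^2=0)\subset\A^4_{(x,y,z,u)}/\frac{1}{2}(1,1,1,0)$. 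Proposition \ref{chfp}(1) then gives $dep(Y_{(I,0)})\leq 1$, so $Y_{(I,0)}$ is smooth or carries a single $\frac{1}{2}(1,1,1)$ point, while Proposition \ref{chfp}(3) gives $dep(Y_{I+1})\in\{0,1\}$.

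I would then case-split on $dep(Y_{I+1})$ and $k_I$. Lemma \ref{lflip} directly produces the shape of case (1) when $dep(Y_{I+1})=1$: the only way to drop depth from $2$ down to $1$ through the chain is a flop $Y_{(I,0)}\dashrightarrow Y_{(I,1)}$ (rather than a flip) followed by a flip $Y_{(I,1)}\dashrightarrow Y_{(I,2)}$ and a final $w$-morphism $Y_{(I,2)}\rightarrow Y_{I+1}$ extracting the $\frac{1}{2}(1,1,1)$ point of $Y_{I+1}$. When $dep(Y_{I+1})=0$, the same bookkeeping via Proposition \ref{chfp}(1)--(2) leaves only two options, both ending with a blow-up of a smooth curve or point on the smooth $Y_{I+1}$: either $k_I=1$ (yielding case (2) once the outer flop $Y_{(I,0)}\dashrightarrow Y_{(I,1)}$ is further factored in the pattern of Lemma \ref{pago} applied to its associated flipping contraction), or $k_I=2$ (yielding case (3)).

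To promote every dashed arrow in the three diagrams to an Atiyah flop I recurse into the inner factorizations. Each inner flop shown --- namely $Y_{(I,1,0)}\dashrightarrow Y_{(I,1,1)}$ in cases (1) and (3), and $Y_{(I,0,0)}\dashrightarrow Y_{(I,0,1)}$ in case (2) --- arises from a flip whose source is a terminal threefold with a single depth-$1$ point (the $\frac{1}{2}(1,1,1)$ extracted by the displayed $w$-morphism). Lemma \ref{pago} therefore applies directly and both identifies the inner map as an Atiyah flop and forces the subsequent contraction to be a blow-up of a smooth curve. In case (2) the tail $Y_{(I,0,1)}\rightarrow Y_{(I,1)}\rightarrow Y_{I+1}$ becomes a composition of two such blow-ups, and a direct discrepancy computation yields the value $2$ claimed for the exceptional divisor of $Y_{(I,0,1)}\rightarrow Y_{(I,1)}$ over $Y_{I+1}$.

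For the ``moreover'' clause I invoke Lemma \ref{ge}: a general elephant $H\in|-K_W|$ of the $cA/2$ point is Du Val of type $A_1$, and $H_{U_{(I,i)}}\rightarrow H$ is a partial resolution at every stage. Case (2) would force $Y_{I+1}$ to be smooth and to carry a discrepancy-$2$ divisor whose intersection with the elephant produces an exceptional configuration that the single vertex of the $A_1$ Dynkin diagram cannot accommodate, ruling the case out. The step I expect to be the main obstacle is the precise identification of $k_I$ and of the terminal divisorial contraction in each subcase: matching the Chen--Hacon bookkeeping to one of the three diagrams requires tracking both the depth drop and the Gorenstein-vs-non-Gorenstein character of the flipped loci through every transition of Proposition \ref{chfp}, and for the $cA/2$ exclusion one must carry out the elephant trace with enough care to obtain the required contradiction.
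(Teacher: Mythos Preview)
Your overall architecture---Chen--Hacon chain, depth bookkeeping, then recurse---matches the paper, but there are three genuine gaps.

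\textbf{Ruling out $k_I=1$ in the flop case.} You invoke Lemma \ref{lflip} to get the shape of case (1), but that lemma leaves open the alternative $k_I=1$ with $Y_{(I,1)}\rightarrow Y_{I+1}$ a divisorial contraction to a curve preserving depth. You have not excluded this. The paper kills it with Kawamata's theorem \cite[Theorem 5]{ka2}: $Y_{(I,1)}$ carries a $\tfrac{1}{2}(1,1,1)$ point, $Y_{I+1}$ has $dep\le 1$ hence at worst a $\tfrac{1}{2}(1,1,1)$ point, and there is no divisorial contraction to a curve through a cyclic quotient singularity. The same obstruction is needed when $dep(Y_{I+1})=0$; your dichotomy ``$k_I=1$ or $k_I=2$'' presupposes it.

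\textbf{The outer flop is Atiyah.} You only argue that the \emph{inner} flops $Y_{(I,1,0)}\dashrightarrow Y_{(I,1,1)}$ and $Y_{(I,0,0)}\dashrightarrow Y_{(I,0,1)}$ are Atiyah via Lemma \ref{pago}. The outer arrow $Y_{(I,0)}\dashrightarrow Y_{(I,1)}$ in cases (1) and (3) is not covered by that lemma: $Y_{(I,0)}$ has a $\tfrac{1}{2}(1,1,1)$ point, so this is a priori a non-Gorenstein flop. The paper first applies Lemma \ref{flopness} (using that the flipping curve on $Y_I$ meets only one singular point and $GorE(Y_I)=-\infty$) to conclude the flop is smooth, and then Lemma \ref{ati} (which needs the $(\ast)$-factorization and the shape of the $cA/r$ singularities on both ends) to upgrade to Atiyah. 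Your proposal skips both steps.

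\textbf{The ``moreover'' clause.} Your elephant argument does not work. Lemma \ref{ge} is stated for the specific global setup of Convention \ref{cn} (a flop over $W$), not for an arbitrary flip $Y_I\dashrightarrow Y_{I+1}$; and the general elephant of the $cA/2$ point $(xy+z^2+u^2=0)/\tfrac{1}{2}(1,1,1,0)$ is $A_3$, not $A_1$, so the Dynkin-diagram count you sketch is off. The paper's argument is a one-line intersection computation: for the $cA/2$ point one has $K_{Y_I}.C_{Y_I}=-\tfrac{1}{2}$, and since the $w$-morphism $Y_{(I,0)}\rightarrow Y_I$ has discrepancy $\tfrac{1}{2}$, the proper transform satisfies $K_{Y_{(I,0)}}.C_{Y_{(I,0)}}=0$. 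Hence $Y_{(I,0)}\dashrightarrow Y_{(I,1)}$ is a flop, which is exactly what distinguishes cases (1)/(3) from case (2).
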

\begin{proof}
	Notice that $\gd(Y_{I,0})=0$ by Remark \ref{sww}. Assume first that $Y_{(I,0)}\dashrightarrow Y_{(I,1)}$ is a flip. We have
	$dep(Y_{(I,1)})=0$ and so $k_I=1$. The factorization of $Y_{(I,0)}\dashrightarrow Y_{(I,1)}$ is given by Lemma \ref{pago}.
	Notice that $exc(Y_{(I,1)}\rightarrow Y_{I+1})$ intersects the flipped curve of $Y_{(I,0)}\dashrightarrow Y_{(I,1)}$ negatively, hence it
	contains the flipped curve. This implies that $Y_{(I,1)}\rightarrow Y_{I+1}$ should be a divisorial contraction to a curve. It is easy to see
	that $a(exc(Y_{(I,0,1)}\rightarrow Y_{(I,1)}),Y_{I+1})=2$ and we are in the case (2).\par
	Now assume that $Y_{(I,0)}\dashrightarrow Y_{(I,1)}$ is a flop. The flop is a smooth flop by Lemma \ref{flopness} and the fact that
	$\gd(Y_{I,0})=0$. If $k_I=1$ then $Y_{(I,1)}\rightarrow Y_{I+1}$ is a divisorial contraction to a curve (cf. \cite[Remark 3.4]{ch}).
	However we know that $\gd(Y_{(I,1)})=\gd(Y_{(I,0)})=0$ and $dep(Y_{(I,1)})=dep(Y_{(I,0)})=1$. By Remark \ref{sdep} we know that
	$Y_{(I,1)}$ has a $\frac{1}{2}(1,1,1)$ singularity, so $Y_{I+1}$ can not be smooth. Since $dep(Y_{I+1})<2$, we know that $dep(Y_{I+1})=1$
	and $Y_{I+1}$ has a $\frac{1}{2}(1,1,1)$ singularity. However, there is no divisorial contraction to a curve which contains a
	cyclic quotient singularity \cite[Theorem 5]{ka2}. Hence $k_I>1$ and we have a flip $Y_{(I,1)}\dashrightarrow Y_{(I,2)}$.
	Now $dep(Y_{(I,2)})=0$ so $k_I=2$ and we have a divisorial contraction $Y_{(I,2)}\rightarrow Y_{I+1}$. The factorization of
	$Y_{(I,1)}\dashrightarrow Y_{(I,2)}$ is given by Lemma \ref{pago}. If $Y_{(I,2)}\rightarrow Y_{I+1}$ is a $w$-morphism, then $dep(Y_{I+1})=1$
	and we we are in the case (1). Otherwise $Y_{I+1}$ is smooth and we are in the case (3).\par
	Now every flop appear in the factorization is an Atiyah flop by Lemma \ref{ati}. If the singular point on $Y_I$ is of type $cA/2$,
	then $K_{Y_I}.C_{Y_I}=-\frac{1}{2}$ by Lemma \ref{bfc}, where $C_{Y_I}$ is the flipping curve of $Y_I\dashrightarrow Y_{I+1}$. 
	An easy computation shows that $K_{Y_{(I,0)}}.C_{(I,0)}>-\frac{1}{2}$. On the other hand, we know that $Y_{(I,0)}$ has only a
	$\frac{1}{2}(1,1,1)$ singularity, so $K_{Y_{(I,0)}}.C_{(I,0)}\in\frac{1}{2}\Z_{\leq 0}$. This means that
	$K_{Y_{(I,0)}}.C_{Y_{(I,0)}}=0$ and so $Y_{(I,0)}\dashrightarrow Y_{(I,1)}$ is a flop. Hence we are in the case (1) or (3).
\end{proof}
\section{Factorizing $D$-type simple smooth flops}\label{scd}

The gaol of this section is to construct the factorization of a $D$-type simple smooth flop.

\begin{lem}
	Assume that \[W=(x^2+y^2z+z^n+ug(x,y,z,u)=0)\subset \A^4\] is an isolated $cD$ singularity. After a suitable coordinate change we may write
	the defining equation of $W$ as $x^2+y^2z+\lambda yu^k+g(z,u)$. We say that this equation is a normal form of the singularity of $W$.
\end{lem}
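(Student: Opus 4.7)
The plan is to put $f$ into the claimed form by a succession of analytic coordinate changes on the germ $(\mathbb{A}^4,0)$, together with discarding unit factors. The reduction splits into four natural stages, each invoking either Weierstrass preparation in a single variable or a Tschirnhaus-style shift, and the eventual normal form is obtained by chaining them.

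First I would apply the splitting lemma in $x$: since the coefficient of $x^2$ in $f$ is a unit, Weierstrass preparation together with completing the square $x\mapsto x-\tfrac12 a_1(y,z,u)$ reduces $f$ to $x^2+F(y,z,u)$ for a residual power series $F$. Restricting to $u=0$ recovers the Du Val section $x^2+y^2z+z^n$, so decomposing $F=\sum_{k\ge 0}y^kF_k(z,u)$ yields $F_2(z,0)=z$, $F_0(z,0)=z^n$, and $F_k(z,0)=0$ otherwise.

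The main step is to remove all $y^{\ge 2}$-contributions other than $y^2z$. For this I would study $G(y,z,u):=\sum_{k\ge 2}y^{k-2}F_k(z,u)$, for which $y^2G$ collects every $y^{\ge 2}$ term of $F$. Because $G(0,0,0)=0$ and $\partial_z G(0,0,0)=1$, the series $G$ is $z$-regular of order one, so Weierstrass preparation produces $G=(z-\psi(y,u))\cdot V$ with $\psi(y,0)=0$ and $V(0,0,0)=1$. Substituting $z\mapsto z+\psi(y,u)$ turns $y^2G$ into $y^2z\cdot V'$, after which the implicit rescaling $y\mapsto y\cdot V'(y,z,u)^{-1/2}$ (valid because $V'(0,0,0)=1$) absorbs the unit and leaves $F=y^2z+y\,\beta(z,u)+\gamma(z,u)$. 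Writing $\beta=\beta_0(u)+z\,\beta_1(z,u)$ and performing the Tschirnhaus shift $y\mapsto y-\tfrac12\beta_1(z,u)$ then cancels the $z\beta_1$ piece against $y^2z$ and collapses the $y$-coefficient to $\beta_0(u)$. Finally, factoring $\beta_0(u)=u^k h(u)$ with $h(0)\neq 0$ (or setting $\lambda=0$ if $\beta_0\equiv 0$) and performing $u\mapsto u\cdot h(u)^{-1/k}$ normalises the $y$-term to $\lambda y u^k$; the remaining terms, being independent of $x$ and $y$, assemble into a single series $g(z,u)$.

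The hard part is the middle step, where several coordinate changes must be composed into a single analytic automorphism of the germ. The verification reduces to showing that the corrections at each stage lie in progressively higher powers of the maximal ideal, so the sequence of substitutions converges formally and therefore analytically (by direct estimates or, if needed, Artin approximation). No other obstacle should appear; each reduction uses only the fact that the coefficient of $x^2$ is a unit and the explicit shape of $F$ at $u=0$ coming from the Du Val hyperplane section.
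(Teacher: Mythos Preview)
Your middle step has a genuine gap. The substitution $z \mapsto z + \psi(y,u)$ does turn $y^2 G$ into $y^2 z \cdot V'$, but since $\psi$ depends on $y$ (as soon as some $F_k$ with $k\ge 3$ is nonzero), it simultaneously injects new $y$-dependence into $F_0(z+\psi(y,u),u)$ and $y\,F_1(z+\psi(y,u),u)$. Writing $\psi = \psi_0(u) + y\psi_1(u) + y^2\psi_2(u) + \cdots$ and Taylor-expanding $F_0$, one picks up fresh $y^{\ge 2}$ contributions such as $\partial_z F_0(z+\psi_0,u)\cdot\psi_2(u)\,y^2$, and these are not of the form $y^2 z \cdot(\text{unit})$. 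The subsequent rescaling of $y$ by a function of $(y,z,u)$ likewise feeds back into the linear term $y F_1$. So after these two moves $F$ is \emph{not} yet of the shape $y^2 z + y\,\beta(z,u) + \gamma(z,u)$. Your closing paragraph gestures at a ``sequence of substitutions'' whose corrections shrink, but the body of the argument describes only a single pass, and the filtration you would actually need is by powers of $u$ (using $\psi(y,0)=0$), not by powers of the full maximal ideal.

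The paper handles this by an explicit iteration. It first reaches a sub-normal form $x^2 + y^2 z + \lambda yu^k + \mu u^n + y^2 u\,h(y,u) + g(z,u)$ in which the only obstruction is the term $y^2 u\,h(y,u)$, and then repeatedly substitutes $z' = z + u\,h(y,u)$: each pass absorbs $y^2 u\,h$ into $y^2 z'$, the new bad terms coming from $g$ carry an extra factor of $u$, and the process terminates by induction on the integer $n(f)-\delta(f)$. To repair your approach you can either make the iteration explicit with this kind of $u$-adic bookkeeping, or sidestep the computation by invoking the versal deformation of the surface singularity $y^2 z + z^n$: versality gives, for each $u$, a right-equivalence in $(y,z)$ carrying $F$ to $y^2 z + b(u)\,y + g(z,u)$ in one stroke, after which a rescaling brings $b(u)$ to $\lambda u^k$.
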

\begin{proof}
	After several steps of coordinate change one may assume that $W$ is defined by $f(x,y,z,u)=x^2+y^2z+\lambda yu^k+\mu u^n+y^2uh(y,u)+g(z,u)$.
	We use the notation that $k=\infty$ if $\lambda=0$ and $n=\infty$ if $\mu=0$. Let $n(f)=\min\{n,k\}$. Then $n(f)$ is finite or
	$W$ do not have isolated singularities.
	We may assume that for all monomial $y^iu^j\in h(y,u)$ we have $j+1<n(f)$. We call such kind of equation a sub-normal form.\par
	In the sub-normal form we may assume that $h(y,u)$ contains only finitely many monomials. 
	If $h(y,u)=\sum_{i,j}a_{ij}y^iu^j$, we define $h(y,u)_{\deg y>0}=\sum_{i>0,j}a_{ij}y^iu^j$. Let 
	\[\delta(f)=\left\lbrace \begin{tabular}{ll} $\min\se{j}{y^iu^j\in h(y,u),i\neq0}$ & if $h(y,u)_{\deg y>0}\neq 0$, \\
		$n(f)-1$ & otherwise.\end{tabular}\right.\]
	If $\delta(f)=n(f)-1$, then after replace $z$ by $z-uh(u)$ the equation becomes a normal form and we have done. Now assume that
	$\delta(f)<n(f)-1$. Let $z'=z+uh(y,u)$ and $f'(x,y,z',u)$ be a sub-normal form of $f(x,y,z',u)$. One may write
	$f'(x,y,z',u)=x^2+y^2z'+\lambda yu^k+\mu u^n+y^2uh'(y,u)+g'(z',u)$, where
	\[ y^2uh'(y,u)=\sum_{i\neq0,j} b_{ij}(-uh(y,u)_{\deg_y>0})^iu^j-
		\left(\sum_{i\neq0,j} b_{ij}(-uh(y,u)_{\deg_y>0})^iu^j\right)_{\deg y\leq 1;\deg_u\geq n(f')}\]
	if $g(z,u)=\sum_{i,j}b_{ij}z^iu^j$. It follows that $n(f')\leq n(f)$ and $\delta(f')\geq \delta(f)$ if $\delta(f')\neq n(f')-1$. Thus
	by induction on the integer $n(f)-\delta(f)$ one can show that every isolated $cD$ singularity admits a normal form.
\end{proof}

\begin{lem}\label{dres}
	Assume that $W$ has a $cD_4$ singularity such that there is only one exceptional divisor of discrepancy one over $W$. Let $W_1$ be the
	unique $w$-morphism over $W$. Then $W_1$ has a non-Gorenstein singularity of type $\frac{1}{2}(1,1,1)$. The other singularities of
	$W_1$ may be $cD_4$, $cA_1$ or $cA_2$. Furthermore, assume that there exists a non-$\Q$-factorial Gorenstein point $P$ of type $cD_4$ on
	$W_1$, then there is only one exceptional divisor of discrepancy one over $P$.
\end{lem}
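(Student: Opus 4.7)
The plan is to realize the unique $w$-morphism $W_1\to W$ as an explicit weighted blowup of $\A^4$ and then enumerate the singularities of the strict transform chart by chart. By the preceding normal-form lemma, I would write $W$ locally as $(x^2+y^2z+\lambda yu^k+g(z,u)=0)\subset\A^4$ with $g(z,u)$ of order $\geq 3$; the $cD_4$ hypothesis allows, after a linear change of $(z,u)$, to assume $g(z,u)=z^3+(\text{higher order})$. Using the discrepancy formula $a=\sum w_i-1-w(f)$ for weighted blowups of a hypersurface, the candidate $w$-morphisms are severely restricted: the ordinary $(1,1,1,1)$-blowup has discrepancy $1$ but yields a non-terminal strict transform, since its exceptional surface $\{x^2=0\}\subset\Pp^3$ is non-reduced; the $(2,1,1,1)$-blowup (weight $2$ on $x$) has $w(f)=3$, discrepancy $1$, and a terminal strict transform. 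Under the uniqueness hypothesis I would identify $W_1\to W$ with the $(2,1,1,1)$-blowup.

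Next I would compute the strict transform chart by chart. In the weight-$2$ chart at $x$, the ambient is $\A^4/\mu_2(1,1,1,1)$; the strict transform equation implicitly solves for the parameter $s$ with $x=s^2$, so $W_1$ in this chart is identified with $\A^3/\mu_2(1,1,1)$, producing the asserted $\frac{1}{2}(1,1,1)$ point at the origin. In each of the three weight-$1$ charts the ambient is smooth and $W_1$ is a Gorenstein hypersurface; singular points of $W_1$ along the exceptional locus correspond to multiple roots of the binary cubic $g_3(z,u)$ (the degree-$3$ part of $g$) inside the exceptional surface $\{y^2z+g_3(z,u)=0\}\subset\Pp(2,1,1,1)$. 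A simple root gives a smooth point of $W_1$ via the implicit function theorem; a double root gives, after analytic change of coordinates, a local equation $\tl{y}^2+cv^2+z\tl{x}^2+(\text{higher})$, which is of type $cA_1$ or $cA_2$; a triple root gives $\tl{y}^2+cv^3+z\tl{x}^2+(\text{higher})$, of type $cD_4$. Hence the Gorenstein singularities of $W_1$ are at worst $cA_1$, $cA_2$, or $cD_4$.

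Finally, a non-$\Q$-factorial $cD_4$ point $P\in W_1$ corresponds to the triple-root case above, so its local equation is itself in $cD_4$ normal form, inherited from the triple-root structure of $g_3(z,u)$. Applying the weighted-blowup analysis of the first paragraph to this local equation, one verifies directly that the $(2,1,1,1)$-blowup at $P$ is the only weighted blowup producing discrepancy $1$ together with a terminal strict transform; hence $P$ has a unique discrepancy-one divisor. The main obstacle in executing this plan is the Gorenstein-singularity classification in the second paragraph: one must verify, by a careful case analysis of the degree-$\geq 4$ terms of $g(z,u)$ in the affine charts---using the uniqueness hypothesis throughout to rule out pathological degenerations---that no $cD_n$ with $n\geq 5$ and no $cE$-type singularities can arise on $W_1$.
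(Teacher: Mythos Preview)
Your overall strategy---realize $W_1$ as a weighted blowup and analyze charts---is the same as the paper's. The gap is in your first paragraph. You enumerate only the weights $(1,1,1,1)$ and $(2,1,1,1)$ as candidates for the $w$-morphism, but the classification of discrepancy-one extractions over $cD$ points (the paper invokes \cite[Proposition~15]{c}) shows that when the cubic part $f_3=y^2z+g_3(z,u)$ (together with $\lambda yu^2$ if $k=2$) is \emph{reducible}, a $w$-morphism is instead given by the weighted blowup of weight $(2,1,2,1)$ or $(2,2,1,1)$. The paper uses the uniqueness hypothesis precisely at this point: for each of those two weights it exhibits, by a further weighted blowup of the resulting non-Gorenstein point on $W_1$, a second exceptional divisor of discrepancy one over $W$. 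Uniqueness therefore forces $f_3$ to be irreducible---in particular $u^3\in f_3$ or $yu^2\in f_3$. Your normalization $z^3\in g_3$ does not secure this; for instance if $g_3=z^3$ then $f_3=z(y^2+z^2)$ is reducible and the $(2,1,1,1)$-blowup does not even have irreducible exceptional locus.

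Once $f_3$ is known to be irreducible, the obstacle you flag at the end dissolves. In the $U_z$-chart the local equation at a singular point is $y^2+x^2z+g'(z,u)$ with $g'(0,u)$ of order at most $3$ in $u$ (because $g_3(1,\cdot)$ is a genuine cubic), so $\mul g'\leq 3$ automatically and the singularity is $cA_1$, $cA_2$ or $cD_4$; no case analysis of the degree-$\geq 4$ terms of $g$ is needed. The irreducibility of $f_3$ is also what drives the final statement: at a $cD_4$ point $P\in W_1$ the new cubic part is again irreducible (it inherits the $u^3$ term), and the same classification then yields a unique discrepancy-one divisor over $P$. Your last paragraph is circular as written, since your first paragraph assumed uniqueness rather than characterized when it holds.
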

\begin{proof}
	We may assume that $W$ is defined by $f(x,y,z,u)=x^2+y^2z+\lambda yu^k+g(z,u)$. Let $f_3(y,z,u)$ be the homogeneous part of degree 3 of $f$.
	As discussed in \cite[Proposition 15]{c}, we have that a
	$w$-morphism over $W$ can be obtained by the weighted blow-up with following weights:
	\begin{enumerate}[(i)]
	\item $f_3(y,z,u)$ is irreducible. One can define $W_1$ to be the weighted blow-up with weight $(2,1,1,1)$.
		The only non-Gorenstein singularity of $W_1$ is the origin of the chart $U_x$, which is a $\frac{1}{2}(1,1,1)$ singularity.
	\item $k>2$ and $f_3(y,z,u)$ is reducible. After a suitable change of coordinates we may assume that $z^3\in g(z,u)$.
		One can weighted blow-up $(2,1,2,1)$. In this case there is a $cA/2$ singularity on the origin of the chart $U_z$,
		which is locally defined by \[(x_1^2+y_1^2+z_1^2+u_1^{2k}=0)\subset \A^4_{(x_1,y_1,z_1,u_1)}/\frac{1}{2}(0,1,1,1),\]
		for some $k>0$. Let $F$ be the exceptional divisor which is obtained by weighted blowing-up $W_1$ with weight $w$ such that
		$w(y_1+z_1,y_1-z_1,u_1,x_1)=\frac{1}{2}(1,k-1,k,2)$. Then $F$ is also an exceptional divisor of discrepancy one over $W$,
		which contradicts to our assumption. Thus this case won't happen.
	\item $k=2$ and $f_3(y,z,u)$ is reducible. Let $W_1$ be the weighted-blow-up with weight $(2,2,1,1)$. Then there is a non-Gorenstein
		singularity on $W_1$ which is locally defined by \[(x_2^2+y_2z_2+u_2^2=0)\subset \A^4_{(x_2,y_2,z_2,u_2)}/\frac{1}{2}(2,1,1,1).\]
		Let $F$ be the exceptional divisor obtained by weighted blow-up $\frac{1}{2}(2,1,1,1)$. One can compute that $a(W,F)=1$,
		hence this case won't happen.
	\end{enumerate}
	From now on we assume that $f_3(y,z,u)$ is irreducible and $W_1$ is obtained by weighted blowing-up $(2,1,1,1)$. One can check that
	every Gorenstein singular point on $W_1$ is contained in $U_z$. Fix a singular point $P\in W_1$. We may assume that $P$ is the origin
	of the chart $U_z$ and the local defining equation is $y^2+x^2z+g'(z,u)$ with $\mul g'(z,u)\leq 3$. Hence $P$ is a $cA_1$, a $cA_2$ or a
	$cD_4$ point.\par
	Assume that $P$ is a $cD_4$ point. Note that in this case we have $u^3\in f_3(y,z,u)$ and hence the degree $3$ part of $y^2+x^2z+g'(z,u)$
	is irreducible. Thus there are only one discrepancy one exceptional divisor over $P$ by the discussion above.
\end{proof}

Because $W_1$ may have $cA_2$ singularities, smooth flops over $cA_2$ points may appear in the factorization of a type $D$ simple smooth flop.
Luckily, the factorization of a smooth flop over a $cA_2$ point is similar to the factorization of a $D$-type simple smooth flop. We will
construct the factorization of this two kinds of flops at the same time.

\begin{pro}\label{cd}
	Notation as in Convention \ref{cn}. Assume that $X\dashrightarrow X'$ is a smooth flop over $W$ such that either
	\begin{enumerate}[(i)]
	\item $X\dashrightarrow X'$ is simple and $W$ has a $cD$ singularity, or
	\item $W$ has a $cA_2$ singularity.
	\end{enumerate}
	Then $Y_{(0,1)}\rightarrow Y_{(1)}\rightarrow X$ is a sequence of blowing-up smooth curves,
	$Y_{(0,0)}\dashrightarrow Y_{(0,1)}$ is an Atiyah flop, and $Y_{(0,0)}\rightarrow Y_{(0)}$
	is a blowing-up a $\frac{1}{2}(1,1,1)$ point.\par
	On the other hand, if $Y\not\cong Y'$, then $Y\dashrightarrow Y'$ is a flop which is either an $A$-type simple smooth flop or
	satisfies one of the condition (i), (ii) above.
\end{pro}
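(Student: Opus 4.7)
I combine Corollary \ref{ff} with the singularity analysis of Lemma \ref{dres} and the depth machinery of Section \ref{sch} to pin down every step of the MMP from $Y$ to $X$. By Corollary \ref{ff}, there is a $w$-morphism $W_1\to W$ and a $\Q$-factorization $Y\to W_1$. In case (i), simplicity of the flop and Lemma \ref{ssflop} guarantee a unique discrepancy-one exceptional divisor over the $cD$ point, so Lemma \ref{dres} applies; in case (ii) the analogous classification of $w$-morphisms over a $cA_2$ point yields the same conclusion. In either case $W_1$, and hence $Y$, has a unique non-Gorenstein singularity of type $\frac{1}{2}(1,1,1)$ together with Gorenstein singularities of type $cA_1$, $cA_2$, or (in case (i)) $cD_4$ (for which the uniqueness-of-discrepancy-one-divisor hypothesis propagates); in particular $dep(Y)=1$.

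Since $\rho(Y/W)=2$ and $\rho(X/W)=1$, the MMP from $Y$ to $X$ contains exactly one divisorial contraction, and Proposition \ref{chfp} forces at most one flip. A direct contraction $Y\to X$ is impossible: $(E_W)_Y$ would have discrepancy one over the smooth threefold $X$, and the classification of such extractions (ordinary blow-up of a smooth point or of a smooth curve) would force $Y$ to be smooth, contradicting the $\frac{1}{2}(1,1,1)$ point. So the MMP is $Y=Y_{(0)} \dashrightarrow Y_{(1)}\to X$ with precisely one flip and one divisorial contraction. The flip must eliminate the $\frac{1}{2}(1,1,1)$ point, so its flipping curve passes through it; in the Chen--Hacon factorization (Theorem \ref{chf}) the initial $w$-morphism $Y_{(0,0)} \to Y_{(0)}$ is therefore the unique $w$-morphism at this cyclic quotient, namely the ordinary blow-up, which makes $Y_{(0,0)}$ smooth near the strict transform of the flipping curve. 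Consequently $Y_{(0,0)} \dashrightarrow Y_{(0,1)}$ cannot be a flip, is a flop that is smooth by Lemma \ref{flopness}, and is identified with an Atiyah flop via Lemma \ref{ati} (with its hypotheses verified locally through Lemma \ref{pago}); the two remaining divisorial contractions $Y_{(0,1)} \to Y_{(1)}$ and $Y_{(1)}\to X$ have discrepancy-one exceptional divisors with smooth curve centres (by Corollary \ref{lci} and the smoothness of $X$ respectively), so they are blow-ups of smooth curves.

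For the flop $Y\dashrightarrow Y'$, if $Y\not\cong Y'$ then $W_1$ is not $\Q$-factorial, and the non-$\Q$-factorial point, being distinct from the always $\Q$-factorial $\frac{1}{2}(1,1,1)$ point, is Gorenstein of type $cA_1$, $cA_2$, or $cD_4$. Then the flopping curve sits in the smooth locus of $Y$ near this point, the flop is smooth, and by Lemma \ref{sm} it is simple; the three cases produce respectively an $A$-type (Atiyah or pagoda) flop, case (ii), and case (i)---the latter retaining its hypothesis via the propagation clause in Lemma \ref{dres}.

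The main obstacle is the identification of $Y_{(0,0)} \dashrightarrow Y_{(0,1)}$ as an Atiyah flop: Lemma \ref{ati} requires $Y_{(0)}$ and $Y_{(1)}$ to have specific simple $cA/r$ singularities together with a factorization of shape $(\ast)$ below them, and one must show our situation fits those hypotheses, either by direct verification after the $w$-morphism or by localizing to a neighbourhood of the flipping curve where only the $\frac{1}{2}(1,1,1)$ point sits and Lemma \ref{pago} applies. Closely tied to this is the verification that the flipping curve of $Y_{(0)} \dashrightarrow Y_{(1)}$ meets no Gorenstein singularity of $Y$, which is what allows the Chen--Hacon factorization to close off with length two after a single $w$-morphism.
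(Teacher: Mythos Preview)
Your plan has a real gap, and it is exactly the ``obstacle'' you flag at the end without resolving. You allow $Y=Y_{(0)}$ to carry Gorenstein singular points of type $cA_1$, $cA_2$, or $cD_4$ in addition to the $\frac{1}{2}(1,1,1)$ point. With that allowance, Lemma~\ref{flopness} cannot be invoked (its hypothesis requires the flipping curve to meet only one singular point and $GorE=-\infty$), Lemma~\ref{ati} does not obviously apply, and Lemma~\ref{pago} is unavailable because it assumes $Y_I$ has \emph{only one} singular point. Your argument that the flipping curve must hit the $\frac{1}{2}(1,1,1)$ point is fine, but nothing in your outline prevents it from also hitting a Gorenstein singular point, and if it did, the Chen--Hacon $w$-morphism at the quotient point would not make $Y_{(0,0)}$ smooth along the strict transform.

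The missing observation is that $Y$ has no Gorenstein singular points at all. Since $X$ is smooth, $GorE(X/W)=-\infty$, and Lemma~\ref{gore} gives $GorE(Y/W)\le GorE(X/W)=-\infty$; hence every singular point of $Y$ over $W$ is non-Gorenstein. Combined with Lemma~\ref{dres} (case (i)) or the direct computation for $cA_2$ (case (ii)), which say the only non-Gorenstein point on $W_1$ is a $\Q$-factorial $\frac{1}{2}(1,1,1)$ point, you conclude that $Y$ has a single singular point, of depth~$1$. At that point the entire first part of the proposition is an immediate application of Lemma~\ref{pago}: it already tells you $Y_{(0,0)}\dashrightarrow Y_{(0,1)}$ is an Atiyah flop and $Y_{(0,1)}\to Y_{(1)}$ is a smooth blow-up, while $Y_{(1)}\to X$ is a smooth blow-up because $X$ is smooth and the exceptional divisor has discrepancy one. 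This is precisely the paper's two-line proof; your longer route through Lemma~\ref{flopness} and Lemma~\ref{ati} becomes unnecessary once you know $Y$ has no Gorenstein points, and is incomplete without that fact. Your treatment of the flop $Y\dashrightarrow Y'$ is essentially correct and matches the paper's use of the last clause of Lemma~\ref{dres}.
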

\begin{proof}
	By Lemma \ref{dres} in case (i) or by direct computation in case (ii), we know that $Y_{(0)}$ has only one non-Gorenstein singular point,
	which is of type $\frac{1}{2}(1,1,1)$. Notice that this point is the only singular point of $Y_{(0)}$ because
	$\gd(X)=0$ implies that $\gd(Y_{(0)})=0$ by Corollary \ref{feam}. The factorization now follows from Lemma \ref{pago}.
	The last statement follows from Lemma \ref{dres} (2).
\end{proof}

\section{Factorizing $E$-type simple smooth flop}\label{sce}
\subsection{Resolving $cE$ singularities}
Let $W=(x^2+y^3+yg\gq3(z,u)+h\gq4(z,u)=0)\subset\A^4$ be a $cE$ singularity. We have
\begin{enumerate}
	\item $h_4(z,u)\neq0$ if $X$ has a $cE_6$ singularity.
	\item $g_3(z,u)\neq0$ and $h_4(z,u)=0$ if $W$ has a $cE_7$ singularity.
	\item $g_3(z,u)=h_4(z,u)=0$ and $h_5(z,u)\neq0$ if $W$ has a $cE_8$ singularities.
\end{enumerate}
We may assume that $u^4\not\in h(z,u)$ (resp. $u^5\not\in h(z,u)$) if $W$ is $cE_6$ (resp. $cE_8$).
\begin{lem}\label{cEw}
	Let $w_k(x,y,z,u)=(a,b,c,1)$ be a weight such that after weighted blowing-up $W$ with weight $w_k$ we get a $w$-morphism.
	Here $k=a+b+c-1$ equals to the weight of the defining equation of $X$.
	Assume that there is only one exceptional divisor with discrepancy one over $X$, then
	\begin{enumerate}[(1)]
	\item If $2a=3b=k$, then $(a,b,k)=(3,2,6)$.
	\item If $k>3$, then neither $2a-1=3b=k$ nor $3b-1=2a=k$.
	\end{enumerate}	 
\end{lem}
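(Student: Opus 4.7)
The plan is to prove both parts by contradiction: assuming the hypothesis and the negation of the conclusion, exhibit a second weighted blow-up of $\A^4$ whose exceptional divisor restricts to a discrepancy-one divisor on $W$ distinct from the one extracted by $(a,b,c,1)$, contradicting uniqueness. A single auxiliary weight $(3,2,2,1)$ will handle all three sub-cases: case (1) with $(a,b,c,k)=(3m,2m,m+1,6m)$ and $m\geq 2$; case (2a) with $k\geq 9$ (forced by $k>3$ together with $k\equiv 3\pmod 6$); and case (2b) with $k\geq 8$ ($k\equiv 2\pmod 6$). In each situation $(3,2,2,1)\neq(a,b,c,1)$.

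Under $(3,2,2,1)$ the induced exceptional discrepancy equals $7-\mu$, where $\mu$ is the $(3,2,2,1)$-weighted multiplicity of $f$. Since $x^2$ and $y^3$ both have $(3,2,2,1)$-weight exactly $6$, we get $\mu\leq 6$. A finite enumeration shows that the only monomials of $yg\gq3+h\gq4$ whose $(3,2,2,1)$-weight is strictly below $6$ are $yu^3$, $u^4$, $u^5$, $zu^3$, with original $(a,b,c,1)$-weights $b+3$, $4$, $5$, $c+3$ respectively. In each of the three sub-cases a direct numerical check shows all four weights are strictly less than $k$ (for instance in case (1) they become $2m+3,4,5,m+4$, all under $6m$ when $m\geq 1$), so by the $w$-morphism hypothesis none of these monomials appear in $f$. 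Hence $\mu=6$ and the discrepancy equals $1$. Moreover, restricting $f/u^6$ to $\{u=0\}$ in the $U_u$-chart yields an equation of the form $x_1^2+y_1^3+(\text{polynomial in }z_1)$, which is irreducible and reduced (a quadratic in $x_1$ with non-square discriminant), so the exceptional divisor on the strict transform is a prime reduced Weil divisor and the discrepancy formula gives exactly $1$.

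The two exceptional divisors on $W$ are distinct because the induced divisorial valuation on $\Cc(W)$ is pinned down by its values on $y,z,u$, which are $(2,2,1)$ for the new blow-up but $(b,c,1)$ with $b\geq 3$ in every sub-case at hand. The principal subtlety is this reducedness verification: a naive alternative such as the weight $(a-1,b,c,1)$ in case (2a) fails precisely because $f/u^{k-1}|_{u=0}=x_1^2$, forcing a double schematic structure and producing discrepancy $2$ rather than $1$. The virtue of $(3,2,2,1)$ is that $x^2$ and $y^3$ both contribute to the leading part, guaranteeing the restricted equation is non-degenerate and yields a genuine discrepancy-one exceptional divisor distinct from the original.
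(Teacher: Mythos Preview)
Your proof is correct and takes a genuinely different route from the paper's. The paper produces the second discrepancy-one divisor $F$ by working \emph{on} $W'$ (the result of the original $(a,b,c,1)$-blow-up): it locates a cyclic quotient point on $W'$ in each sub-case and performs a further weighted blow-up there, computing $a(F,W)$ via the chain of morphisms. You instead work directly on $W$, observing that the single auxiliary weight $(3,2,2,1)$ does the job uniformly once you check that the four low-weight monomials $yu^3,u^4,u^5,zu^3$ are excluded by the original weight constraint. Your approach is more economical and conceptually cleaner, though it trades the paper's mechanical chart computations for a small irreducibility verification. One minor inaccuracy: the leading part in the $U_u$-chart is not quite $x_1^2+y_1^3+(\text{polynomial in }z_1)$, since the weight-$6$ monomials $yzu^2$ and $yu^4$ (which can occur in $yg_{\geq 3}$) contribute $y_1z_1$ and $y_1$ terms; but your irreducibility argument survives unchanged, as the constant term (in $x_1$) is still cubic in $y_1$ and hence not a square.
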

\begin{proof}
	Let $W'\rightarrow W$ be the weighted blow-up with weight $w$ and let $E$ be the exceptional divisor. We have $a(E,W)=1$. We want to
	say that if (1) or (2) is not true, then there exists an exceptional divisor $F$ over $W$ such that $a(F,W)=1$, which contradicts
	to our assumption.\par 
	Assume first that (1) is not ture, so that $2a=3b=k$ but $k>6$. In this case we have $a=3d$ and $b=2d$ for some $d>1$. Consider the chart
	$U'_y\subset W'$ is defined by $x^2+1+g'(y,z,u)=0\subset \A^4/\frac{1}{b}(a,-1,c,1)$.
	The point $(\pm1,0,0,0)$ is a cyclic quotient point of type $\frac{1}{d}(-1,c,1)$. Notice that $c=d+1$ by the relation
	\[ 6d=2a=3b=k=a+b+c-1=5d+c-1. \]
	Let $W''\rightarrow W'$ be the weighted blow-up $\frac{1}{d}(d-1,1,1)$. One can take $F=exc(W''\rightarrow W)$.\par
	Now assume that (2) is not true. If $3b-1=2a=k$, then the origin of the chart $U'_y$ on $W'$ is a cyclic quotient point of type
	$\frac{1}{b}(a,c,1)$. Note that $a+c=k+1-b=3b-b=2b$ and $a>b$ since $3b-1=2a$. Let $W''\rightarrow W'$ be the weighted blow-up
	with weight $\frac{1}{b}(a-b,c,1)$ and we can take $F=exc(W''\rightarrow W)$.\par
	Finally assume that $2a-1=3b=k$. The origin of the chart $U'_x$ on $W'$ is a cyclic quotient point of type $\frac{1}{a}(b,c,1)$.
	Notice that we have $b+c=k+1-a=a$ and $2b=\frac{2}{3}(2a-1)>a$ because $k>3$ implies $a>2$.
	Let $W''\rightarrow W'$ be the weighted blow-up with weight $\frac{1}{a}(2b-a,2c,2)$ and let $F$ be the exceptional divisor.
	One can compute that \[ a(F,W)\leq \frac{1}{a}\left(3(2b-a)+2\right)=\frac{1}{a}\left(2(2a-1)-3a+2\right)=1,\]
	hence $a(W,F)=1$.
\end{proof}
Assume that $W$ is a $cE_n$ singularity for $n=6$, $7$ or $8$. We assume that $W$ is defined by $x^2+y^3+yg\gq3(z,u)+h\gq4(z,u)$.
By \cite[Theorem 34, 36, 37]{c} we know that there exists a weight $w_k$ in the following set
	\[\left\lbrace\begin{tabular}{c}
		$w_4=(2,2,1,1)_{n=6}$, $w_5=(3,2,1,1)_{n=6,7}$, $w_6=(3,2,2,1)$, $w_8=(4,3,2,1)$,\\
		$w_9=(5,3,2,1)_{n=7,8}$, $w_{12}=(6,4,3,1)$, $w_{14}=(7,5,3,1)_{n=7,8}$,\\
		$w_{18}=(9,6,4,1)_{n=7,8}$, $w_{24}=(12,8,5,1)_{n=8}$, $w_{30}=(15,10,6,1)_{n=8}$\end{tabular}\right\rbrace.\]
such that the weighted blow-up with weight $w_k$ gives a $w$-morphism $W'\rightarrow W$.
\begin{lem}\label{cew}
	Notation as above. Assume that $W$ has only one exceptional divisor with has discrepancy one, then
	$W'\rightarrow W$ is obtained by weighted blowing-up $w_{n-2}$. Moreover, if $n=6$ then $h_4(z,u)$ is not a perfect square. If $n=7$,
	then either $u^3\in g(z,u)$ or $u^5\in h(z,u)$.
\end{lem}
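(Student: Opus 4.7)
The plan is to combine two ingredients already available. First, \cite[Theorems 34, 36, 37]{c} pins the $w$-morphism $W' \to W$ down to a weighted blow-up with respect to one of the listed weights $w_k$. Second, Lemma \ref{cEw} eliminates any weight that, together with the hypothesis of a unique discrepancy-one divisor, would force a second such divisor to exist. Applying Lemma \ref{cEw} to the list by a direct arithmetic check: $2a = 3b = k$ with $(a,b,k) \neq (3,2,6)$ eliminates $w_{12}, w_{18}, w_{24}, w_{30}$; the relation $3b - 1 = 2a = k$ eliminates $w_8$ and $w_{14}$; and $2a - 1 = 3b = k$ eliminates $w_9$. Only $w_4, w_5, w_6$ survive, subject to the specific list available for each $cE_n$.

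For $cE_8$, only $w_6 = w_{n-2}$ remains in the list, and the first assertion is immediate. For $cE_7$ and $cE_6$, more than one candidate survives, so I would finish by computing the weight of $f = x^2 + y^3 + yg(z,u) + h(z,u)$ under each remaining $w_k$ and retaining only those for which $\w(f) = k$ (the $w$-morphism condition). For $cE_7$: $yg_3$ always has weight $5$ under $w_5 = (3,2,1,1)$, so $w_5 = w_{n-2}$ always yields a $w$-morphism; under $w_6 = (3,2,2,1)$ the only monomials of $f$ of potential weight less than $6$ are $yu^3$ (from $yg_3$) and $u^5$ (from $h_5$), hence $w_6$ is a $w$-morphism if and only if $u^3 \notin g$ and $u^5 \notin h$. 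Uniqueness forbids a second $w$-morphism, yielding both that $W' \to W$ must be the $w_5$-blow-up and the stated alternative $u^3 \in g$ or $u^5 \in h$.

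For $cE_6$, a parallel computation shows that $w_4 = w_{n-2}$ always yields a $w$-morphism (from $h_4 \neq 0$, of weight $4$ under $w_4$), while $w_5$ never does since $h_4$ still has weight $4 < 5$ under $w_5$. Only $w_6$ therefore needs to be excluded. Under $w_6 = (3,2,2,1)$ and the standing assumption $u^4 \notin h$, the condition $\w(f) \geq 6$ reduces to $u^3 \notin g$, $u^5 \notin h$ and $zu^3 \notin h_4$. I would then invoke the coordinate changes that preserve both the $w_6$-filtration and the normal form of the $cE_6$ equation, namely $z \mapsto z + \mu u^2$ and $y \mapsto y + \nu u^2 + \rho z$ together with compensations absorbed into $x$ and into the higher parts of $g$ and $h$, to show that after this normalization these coefficient vanishings are equivalent to $h_4(z,u)$ being a perfect square as a binary quartic. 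The contrapositive then gives the stated conclusion. The main obstacle is this final normalization step for $cE_6$: rigorously tracking which redistributions of coefficients among $h_4, g_3$ and $h_5$ are achievable without leaving the allowed filtration, and verifying that the equivalence between ``$w_6$ is a $w$-morphism after normalization'' and ``$h_4$ is a perfect square'' really holds and not merely the weaker statement that $h_4$ has a repeated root.
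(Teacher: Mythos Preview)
Your overall strategy differs from the paper's and contains genuine gaps. The paper does \emph{not} argue by exhibiting two simultaneous $w$-morphisms; instead, for each surviving $w_k$ with $k>n-2$ it assumes that $w_k$ is the $w$-morphism and then constructs, by explicit further weighted blow-ups of the resulting singular charts on $W'$, a second exceptional divisor $F$ with $a(W,F)=1$. This bypasses entirely the question of which $w_k$ actually give $w$-morphisms.

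Your argument conflates ``the weight condition $\w_{w_k}(f)=k$ holds'' with ``$w_k$ yields a $w$-morphism''. The latter also requires that $W'$ be terminal and that the exceptional divisor be irreducible, and these fail precisely in the cases you need. Concretely:

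For $cE_7$: you claim $w_5$ always yields a $w$-morphism. This is false. The paper records that $w_5$ defining a $w$-morphism \emph{implies} $u^3\in g$ or $u^5\in h$; when neither holds, the $w_5$-blow-up is not terminal (the chart $U_u$ acquires a bad point). In that regime $w_5$ fails, so your ``two $w$-morphisms'' contradiction never materializes and you have no way to exclude $w_6$. The paper handles exactly this case by analyzing the $cD/2$ point on the $w_6$-blow-up and producing $F$ directly.

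For $cE_6$: you claim $w_4$ always yields a $w$-morphism and $w_5$ never does. Both are wrong. Under $w_4=(2,2,1,1)$ the weight-$4$ part of $f$ is $x^2+h_4(z,u)$, and the exceptional divisor is irreducible precisely when $h_4$ is \emph{not} a perfect square; this is where the perfect-square condition actually comes from in the paper, not from $w_6$. Conversely, when $h_4=q(z,u)^2$ one can uncomplete the square to write $f=x^2+xq+\cdots$, and then $w_5$ \emph{does} give a $w$-morphism (this is the paper's case (b)). So your dichotomy collapses, and your attempt to link the $w_6$ weight conditions to ``$h_4$ is a perfect square'' via coordinate changes is aiming at the wrong target: e.g.\ $h_4=z^3u$ is not a perfect square yet has no weight-$5$ term under $w_6$.

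In short, the weight bookkeeping alone cannot carry the argument; you need either the paper's explicit second-divisor constructions over the charts of the higher $w_k$-blow-ups, or a careful terminality analysis of each blow-up that you have not supplied.
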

\begin{proof}
	By Lemma \ref{cEw} we know that if $W'\rightarrow W$ is obtained by weighted blowing-up the weight $w_k$, then $k\leq 6$. This proves the
	case when $n=8$.\par
	Let $E=exc(W'\rightarrow W)$. Assume that $n=7$. We want to show that if $W'\rightarrow W$ is obtained by weighted blowing-up $w_6$,
	then there exists an exceptional divisor $F\neq E$ such that $a(F,W)=1$. We know that $W$ is defined by
	$x^2+y^3+yz^3+yg\gq3(z,u)+h\gq5(z,u)$. The chart $U'_z\subset W'$ is defined by
	\[(x^2+y^3+yz^2+yg'\gq2(z,u)+h'\gq4(z,u)=0)\subset\A^4_{(x,y,z,u)}/\frac{1}{2}(1,0,1,1),\]
	which is a $cD/2$ point. 
	\begin{enumerate}[(i)]
	\item $\mul h'(z,u)\geq6$. Let $W''\rightarrow W'$ be the weighted blow-up with weight $\frac{1}{2}(3,4,1,1)$ and let
		$F=exc(W''\rightarrow W)$.
	\item $\mul h'(z,u)=4$ and $u^4\not\in h'(z,u)$. Note that after suitable change of coordinate we may assume that $u^2\not\in g'(z,u)$.
		Let $W_1\rightarrow W'$ be the weighted blow-up with weight $\frac{1}{2}(3,2,3,1)$. The chart $U_{1,z}\subset W_1$ is defined by
		\[ x^2+y^3+yz+g_1(y,z,u)=0\subset \A^4_{(x,y,z,u)}/\frac{1}{3}(3,2,1,1).\]
		Let $W_2\rightarrow W_1$ be the weighted blow-up $\frac{1}{3}(3,2,1,1)$ (resp. $\frac{1}{3}(3,5,1,1)$) if $u^3\in g_1(y,z,u)$
		(resp. $u^3\not\in g_1(y,z,u)$). One can see that either $F_1=exc(W_1\rightarrow W')$ or $F_2=exc(W_2\rightarrow W_1)$
		has discrepancy one over $W$. Let $F$ be this exceptional divisor.
	\item $u^4\in h'(z,u)$. Let $W''\rightarrow W'$ be the weighted blow-up with weight $\frac{1}{2}(3,2,1,1)$ and let $F=exc(W''\rightarrow W)$.
	\end{enumerate}
	Since there always exists an exceptional divisor $F$ over $W$ such that $a(F,W)=1$, we get a contradiction. Thus when $n=7$, $W'\rightarrow W$
	should be obtained by weighted blowing-up $w_5$. It is easy to see that $w_5$ defines a $w$-morphism implies either $u^3\in g(z,u)$ or
	$u^5\in h(z,u)$.\par
	Finally assume that $n=6$ and we are going to show that $W'\rightarrow W$ is obtained by weighted blowing-up $w_4$. Equivalently,
	we want to say that if $W'\rightarrow W$ is obtained by weighted blowing-up $w_6$ or $w_5$, then there exists a exceptional divisor $F$ over
	$W$ such that $a(F,W)=1$.
	\begin{enumerate}[(a)]
	\item $W'\rightarrow W$ is obtained by weighted blowing-up $w_6$. We have that $W$ is defined by $x^2+y^3+z^4+g(y,z,u)$.
		The chart $U'_z\subset W'$ is defined by \[(x^2+y^3+z^2+g'(y,z,u)=0)\subset \A^4_{(x,y,z,u)}/\frac{1}{2}(1,0,1,1),\]
		which is a $cA/2$ point. For a suitable change of coordinates we may write the defining equation as $(x+iz)(x-iz)+g_1(y,u)$.
		Define a weight $w'(x+iz,x-iz,u,y)=(1,2m-1,1,2)$, where $m=\w_{w'}(g_1(y,u))$. Let $W''\rightarrow W'$ be the weighted blow-up with
		the weight $w'$, then $F=exc(W''\rightarrow W')$ satisfies that $a(F,W)=1$.
	\item $W'\rightarrow W$ is obtained by weighted blowing-up $w_5$. In this case $W$ is defined by \[x^2+xq(z,u)+y^3+g(y,z,u)\]
		and the weight of the defining equation of $W$ is $5$. Note that we have $q(z,u)$ is a non-zero homogeneous
		polynomial of degree $2$. The chart $U'_x\subset W'$ is a cyclic quotient point of type $\frac{1}{3}(2,1,1)$. Let
		$W''\rightarrow W'$ be the weighted blow-up with weight $\frac{1}{3}(2,1,1)$, then $F=exc(W''\rightarrow W')$ satisfies $a(F,W)=1$.
	\end{enumerate}
	Hence $W'\rightarrow W$ is defined by $w_4$. One can easily see that $w_4$ is a $w$-morphism only when $h_4(z,u)$ is not a perfect square. 
\end{proof}

\subsection{Factorizing $cE_6$ flops}
In this subsection we assume that $X\dashrightarrow X'$ is a simple smooth flop over $W$, such that $W$ has a $cE_6$ singularity.
As before we let $W_1\rightarrow W$ be a $w$-morphism, let $Y\rightarrow W_1$ be a $\Q$-factorization of $W_1$ and construct
a diagram as in Convention \ref{cn}
\begin{lem}\label{lce6}$ $
	\begin{enumerate}[(1)]
	\item $W_1$ has only one singular point, which is a depth three $cAx/2$ point.
	\item $Y\dashrightarrow Y'$ is a simple flop over the $cAx/2$ point. $Y$ has a $cA/2$ point which is of depth $2$. There are no other
		singular point on $Y$.
	\end{enumerate}
\end{lem}
\begin{proof}
	By Lemma \ref{cew} we know that $W_1\rightarrow W$ is obtained by weighted blowing-up the weight $(2,2,1,1)$. One can check that there
	is only one non-Gorenstein point $P\in W_1$, which is a $cAx/2$ point defined by
	\[(x^2+y^2+h'(y,z,u)=0)\subset\A^4_{(x,y,z,u)}/\frac{1}{2}(0,1,1,1)\]
	such that $\mul h'(y,z,u)=4$ and $h'(0,z,u)$ is not a perfect square (the latter statement follows from Lemma \ref{cew}).
	The only $w$-morphism $W_2\rightarrow W_1$ over this point is given by weighted blowing-up the weight $\frac{1}{2}(2,3,1,1)$
	(\cite[Section 8]{h1}). An easy computation shows that the only non-Gorenstein point on $W_2$ is a $\frac{1}{3}(2,1,1)$ point,
	hence $dep(W_2)=2$ and $dep(W_1)=3$. Assume that $P$ is $\Q$-factorial, then the only non-Gorenstein point
	on $Y$ is also a $cAx/2$ point. Hence the flipping curve of $Y=Y_{(0)}\dashrightarrow Y_{(1)}$ should pass through this
	$cAx/2$ point and there are no other non-Gorenstein points on the flipping curve. However it is impossible because of the classification
	\cite[Theorem 2.2]{km2}. Hence $P$ is not $\Q$-factorial and so $Y\dashrightarrow Y'$ is a flop over $P$.
	Note that $Y$ has no Gorenstein singular points since $0\leq \gd(Y)\leq \gd(X)=0$. Hence $W_1$ do not have any Gorenstein singular point
	other than $P$.\par
	\begin{cl}
		$dep(Y)>1$.
	\end{cl}
	To prove the claim, assume that $dep(Y)=1$. We have that $Y\dashrightarrow Y_{(1)}$ can be factorize as in Lemma \ref{pago}. Hence there
	exists a sequence of blowing-up smooth curves $Y_{(0,1)}\rightarrow Y_{(1)}\rightarrow X$, so that there exists an Atiyah flop
	$Y_{(0,1)}\dashrightarrow Y_{(0,0)}$. However since $W$ has a $cE_6$ singularity, the normal bundle sequence is of length $4$, this leads a
	contradiction.\par
	Since $dep(Y)<dep(W_1)=3$ by Lemma \ref{gdfp}, we have $dep(Y)=2$. Since $Y\rightarrow W_1$ is a fopping contraction over a $cAx/2$ point,
	the singular point of $Y$ is a $cA/2$ point described in Remark \ref{sdep}.
\end{proof}
\begin{lem}\label{cax2}
	The flop $Y\dashrightarrow Y'$ in Lemma \ref{lce6} can be factorize into
	\[ Y\leftarrow Z_{(1)}\dashleftarrow Z_{(0)}\dashrightarrow Z'_{(0)} \dashrightarrow Z'_{(1)} \rightarrow Y',\]
	such that $Z_{(0)}\dashrightarrow Z_{(1)}$ and $Z'_{(0)}\dashrightarrow Z'_{(1)}$ are flips in Lemma
	\ref{3flip} (1), $Z_{(0)}\dashrightarrow Z'_{(0)}$ is at worst a $D$-type smooth flop, and other arrows are $w$-morphisms.
\end{lem}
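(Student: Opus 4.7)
The plan is to mirror the construction of Corollary \ref{ff}: apply a $w$-morphism over the $cAx/2$ point $P\in W_1$ that the flop $Y\dashrightarrow Y'$ sits over, $\Q$-factorize, and run the two MMPs back to $Y$ and $Y'$. By Lemma \ref{lce6}(3), $Y$ has a unique exceptional divisor $F_{\frac{1}{2}}$ of discrepancy $\frac{1}{2}$ over its $cA/2$ point, so the divisorial contraction appearing at the end of the MMP on the $Y$-side is forced to extract exactly $F_{\frac{1}{2}}$; by definition this is a $w$-morphism, which I name $Z_{(1)}\to Y$. Since $dep(Y)=2$, Proposition \ref{chfp} gives $dep(Z_{(1)})\leq 1$, and the remark following the definition of depth forces $Z_{(1)}$ to have exactly one $\frac{1}{2}(1,1,1)$ point, with all other singularities Gorenstein. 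Define $Z'_{(1)}\to Y'$ symmetrically.

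Next I would run the two MMPs to identify $Z_{(0)}$ and $Z'_{(0)}$. Let $V\to W_1$ be a $w$-morphism at $P$ and let $\tilde{Z}$ be a $\Q$-factorization of $V$; its two $K_{\tilde{Z}}$-MMPs over $W_1$ reach $Y$ and $Y'$ by the argument in Section \ref{sff}. I claim each MMP consists of a single flip followed by the final $w$-morphism to $Y$ (resp.\ $Y'$). The reason is that $\rho(\tilde{Z}/W_1)=2$ together with the count of exceptional divisors of discrepancy $\leq 1$ over $P$ on each side (one of discrepancy $\frac{1}{2}$ and one of discrepancy $1$, again from Lemma \ref{lce6}(3)) forces at most one flip before the divisorial contraction extracting $F_{\frac{1}{2}}$. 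This produces the intermediate variety $Z_{(0)}$ connected to $Z_{(1)}$ by a flip, together with a middle link $Z_{(0)}\dashrightarrow Z'_{(0)}$ that is either an isomorphism or a flop over $V$.

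It remains to recognise the flip $Z_{(0)}\dashrightarrow Z_{(1)}$ as case (1) of Lemma \ref{3flip} and to bound the middle flop. Since $Z_{(1)}$ has a $\frac{1}{2}(1,1,1)$ point, among the three cases of Lemma \ref{3flip} only case (1) fits (cases (2) and (3) require $Y_{I+1}$ to be smooth). For the middle flop, the $Y$-side non-Gorenstein point has been entirely accounted for, so $Z_{(0)}$ carries only a depth-$2$ singularity compatible with case (1) plus Gorenstein singularities. By tracking a general elephant through the construction via Lemma \ref{ge1} and the local classification in Lemma \ref{dres}, the Gorenstein singularities on $Z_{(0)}$ are at worst $cD$, so Proposition \ref{cd} identifies $Z_{(0)}\dashrightarrow Z'_{(0)}$ as at worst a $D$-type simple smooth flop (possibly an isomorphism, an Atiyah flop, or an $A$-type flop over $cA_2$).

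The main obstacle is this last point: ruling out $cE$ singularities on $Z_{(0)}$ and controlling the depth-$2$ structure precisely enough to match case (1) of Lemma \ref{3flip}. This requires careful bookkeeping of the $w$-morphism of the $cAx/2$ point $P$ and of the subsequent $\Q$-factorization; the tools from Section \ref{sele}, in particular Lemma \ref{ge1} and the Gorenstein-elephant's height monotonicity in Proposition \ref{geh}, are what make the argument go through cleanly.
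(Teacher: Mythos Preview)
Your overall architecture matches the paper's (apply Corollary \ref{ff} over the $cAx/2$ point $P\in W_1$), but the paper's execution is far more direct and avoids the obstacles you flag. The paper simply writes down the $w$-morphism $W_2\to W_1$ as the weighted blow-up with weight $\frac{1}{2}(2,3,1,1)$ of
\[(x^2+y^2+g(z,u)=0)\subset\A^4/\tfrac{1}{2}(0,1,1,1),\qquad \mul g=4,\]
and reads off the singularities of $W_2$ by inspection: the unique non-Gorenstein point is $\frac{1}{3}(2,1,1)$, and the Gorenstein points are at worst $cD$. This one computation instantly gives $dep(Z_{(0)})=2$ and bounds the middle flop by $D$-type, dissolving your ``main obstacle'' without any appeal to general elephants or $GorE$.

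A few specific issues in your write-up: (i) your citation of Proposition \ref{chfp} is backwards --- it gives $dep(Z_{(1)})\geq dep(Y)-1=1$, not $\leq 1$; the upper bound comes instead from $dep(Z_{(0)})=2$ together with the fact that flips strictly decrease depth, which is exactly what the explicit computation of $W_2$ supplies. (ii) Your argument that ``$\rho(\tilde{Z}/W_1)=2$ together with the count of exceptional divisors of discrepancy $\leq 1$'' forces a single flip does not work as stated; $\rho=2$ only says each MMP step is extremal, not how many flips occur. The correct count is the depth sandwich $1\leq dep(Z_{(k)})<dep(Z_{(0)})=2$, giving $k=1$. (iii) Lemma \ref{dres} concerns $cD_4$ points, not $cAx/2$ points, so it does not bound the Gorenstein singularities on $Z_{(0)}$; again, the direct weighted blow-up computation is what does this. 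Replace your indirect bookkeeping with that single explicit chart computation and the proof becomes two lines.
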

\begin{proof}
	Let $W_2\rightarrow W_1$ be a $w$-morphism over the $cAx/2$ point. Form the computation in Lemma \ref{cax} we know that
	the non-Gorenstein point on $W_2$ is a $\frac{1}{3}(2,1,1)$ point and $W_2$ has at worst $cD$ Gorenstein singularities. Hence
	$dep(Z_{(0)})=2$ and $Z_{(0)}\dashrightarrow Z'_{(0)}$ is at worst a $D$-type flop. We know that $dep(Y)=2$ and $Y\dashrightarrow Y'$
	is a non-Gorenstein flop by Lemma \ref{cax}. It follows that the birational map $Z_{(0)}\dashrightarrow Y$ is a flip
	$Z_{(0)}\dashrightarrow Z_{(1)}$ followed by a $w$-morphism $Z_{(1)}\rightarrow Y$. 
	One has that $dep(Z_{(1)})=1$ and $Z_{(0)}\dashrightarrow Z_{(1)}$ is given by Lemma \ref{3flip} (1). This proves the lemma.
\end{proof}

\begin{pro}\label{ce6}$ $
	Assume that $X\dashrightarrow X'$ is a simple smooth flop over $W$ such that $W$ has a $cE_6$ singularity. Then we have a factorization
	\[X\leftarrow Y_{(1)}\dashleftarrow Y_{(0)}=Y\dashrightarrow Y'=Y'_{(0)}\dashrightarrow Y'_{(1)} \rightarrow X'\]
	such that $Y_{(0)}\dashrightarrow Y_{(1)}$ is given by Lemma \ref{3flip} (3), $Y\dashrightarrow Y'$
	is given by Lemma \ref{cax2}, and all other maps are blowing-up smooth curves. The diagram of $Y'\dashrightarrow X'$ is symmetric.
\end{pro}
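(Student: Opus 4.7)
The plan is to assemble the factorization from four previously established results: Corollary~\ref{ff}, Lemma~\ref{lce6}, Lemma~\ref{cax2}, and Lemma~\ref{3flip}. First, I would invoke Corollary~\ref{ff} to obtain an outer MMP factorization
\[
  X \leftarrow Y_k \dashleftarrow \cdots \dashleftarrow Y_0 = Y \dashrightarrow Y' = Y'_0 \dashrightarrow \cdots \to X',
\]
where $Y$ is a $\Q$-factorization of the $w$-morphism $W_1 \to W$ identified in Lemma~\ref{cew} as the weighted blow-up with weight $(2,2,1,1)$. Lemma~\ref{lce6} then pins down the geometry of $Y$: a unique non-Gorenstein point of type $cA/2$ with depth $2$, the identification of $Y \dashrightarrow Y'$ as a simple flop over the $cAx/2$ of $W_1$, and the discrepancies $a(X,F_{1/2})=2$ and $a(X,F_1)\in\{2,3\}$ for the two small-discrepancy divisors over that point. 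Lemma~\ref{cax2} then directly supplies the factorization of the central flop $Y \dashrightarrow Y'$ asserted in the proposition.

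For the outer flip $Y_0 \dashrightarrow Y_1$, since $Y$ has a single $cA/2$ depth-$2$ singular point, Lemma~\ref{3flip} applies and its concluding sentence restricts the structure to case~(1) or case~(3). I would then rule out case~(1): there $Y_1$ would carry a $\frac{1}{2}(1,1,1)$ point, so the outer divisorial contraction $Y_1 \to X$ (needed to drop the Picard number) would be forced to be the $w$-morphism extracting that singularity, since $X$ is smooth. A discrepancy computation then identifies the extracted divisor with the unique discrepancy-$\frac{1}{2}$ divisor $F_{1/2}$ over $Y$, but the resulting value of $a(X,F_{1/2})$ conflicts with the value $2$ recorded in Lemma~\ref{lce6}(3), a contradiction. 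Hence case~(3) of Lemma~\ref{3flip} holds, $Y_1 = Y_{(1)}$ is smooth, and $k = 1$ in the outer MMP.

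The remaining map $Y_{(1)} \to X$ is a divisorial contraction between two smooth threefolds, hence the blow-up of a smooth center. To verify the center is a curve and not a point, I would match exceptional divisors with the length-$4$ normal bundle sequence of the $cE_6$ simple smooth flop (starting with $(1,-3)$): the flopping curve $C \subset X$ is a smooth rational curve whose blow-up produces a ruled surface, and the exceptional divisor of $Y_{(1)} \to X$ must realise this ruled surface, forcing $Y_{(1)} = Bl_C X$. The symmetric argument on the $Y' \dashrightarrow X'$ side completes the factorization. The main obstacle I anticipate is ruling out case~(1) of Lemma~\ref{3flip}, which rests on an accurate comparison of discrepancies through the Chen--Hacon factorization of the flip and the outer contraction, with careful attention to the index-$2$ correction terms.
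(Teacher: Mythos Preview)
Your overall plan matches the paper's: assemble the factorization from Corollary~\ref{ff}, Lemma~\ref{lce6}, Lemma~\ref{cax2}, and Lemma~\ref{3flip}. The paper's own proof is two sentences---it cites $dep(Y)=2$ and the $cA/2$ type from Lemma~\ref{lce6}, asserts $Y_{(1)}$ is smooth, and invokes Lemma~\ref{3flip}(3)---so the substantive content is ruling out case~(1).

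Your argument for this step has a gap. You assume that in case~(1) the divisorial contraction is $Y_{(1)}\to X$. But the last map $Y_k\to X$ in the outer MMP extracts $E_1=exc(W_1\to W)$, and by Lemma~\ref{ssflop} the centre $\cen_X E_1$ is the flopping curve $C$, so $Y_k=Bl_{C}X$ is smooth regardless. Hence if $Y_{(1)}$ carried a $\frac{1}{2}(1,1,1)$ point the MMP could not stop there: one would have $k=2$, with a further flip $Y_{(1)}\dashrightarrow Y_{(2)}$ (Lemma~\ref{pago}) before $Y_{(2)}\to X$. Your comparison with $a(X,F_{1/2})=2$ does not exclude this two-flip scenario.

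The clean fix uses the \emph{uniqueness} in Lemma~\ref{lce6}(3): $F_{1/2}$ is the only exceptional divisor of discrepancy $<1$ over the singular point of $Y$. If $Y_{(1)}$ had a $\frac{1}{2}(1,1,1)$ point, the divisor $G$ above it would satisfy $a(Y_{(1)},G)=\frac{1}{2}$; since discrepancies do not decrease under a flip, $a(Y,G)\le\frac{1}{2}$, forcing $G=F_{1/2}$ and hence equality $a(Y,F_{1/2})=a(Y_{(1)},F_{1/2})$. But the flipping curve must pass through the unique non-Gorenstein point of $Y$, which is precisely $\cen_Y F_{1/2}$, so this discrepancy must strictly increase---contradiction. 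Thus $Y_{(1)}$ has no non-Gorenstein point, and combined with $GorE(Y_{(1)}/W)=-\infty$ it is smooth, giving $k=1$ and case~(3). (The same mechanism appears, tersely, in the proof of Proposition~\ref{e8flop}.)
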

\begin{proof}
	By Lemma \ref{lce6} we know that $dep(Y)=2$. Since $Y$ has $cA/2$ singularities and $Y_{(1)}$ is smooth, $Y_{(0)}\dashrightarrow Y_{(1)}$
	is given by Lemma \ref{3flip} (3).
\end{proof}
\subsection{Factorizing $cE_7$ flops} 
In this subsection we assume that $X\dashrightarrow X'$ is a simple smooth flop over $W$, such that $W$ has a $cE_7$ singularity.
\begin{lem}\label{lce7}
	$Y\dashrightarrow Y'$ is a smooth flop over a $cA_2$ point. There are two singular points on $Y$. One of them is a $\frac{1}{3}(2,1,1)$
	point and the other one is a $\frac{1}{2}(1,1,1)$ point. Moreover, the flipping curve of $Y=Y_{(0)}\dashrightarrow Y_{(1)}$ passes through
	the both singular points.
\end{lem}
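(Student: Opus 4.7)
The plan is to mirror the strategy of Lemma \ref{lce6}. Since $X\dashrightarrow X'$ is a simple smooth flop, Lemma \ref{ssflop} tells us that $W$ is not $\Q$-factorial and that there is exactly one exceptional divisor of discrepancy one over $W$. Combined with the $cE_7$ part of Lemma \ref{cew}, this identifies $W_1\to W$ as the weighted blow-up of weight $(3,2,1,1)$, with the normal form of $W$ satisfying either $u^3\in g_3(z,u)$ or $u^5\in h_5(z,u)$.

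The first step is to determine the singularities of $W_1$ by direct inspection of its affine charts. In the chart $U_x$ one can solve for the blow-up coordinate $x_1$ from the strict transform, identifying the chart analytically with $\A^3/\frac{1}{3}(2,1,1)$ and producing a single $\frac{1}{3}(2,1,1)$ cyclic quotient at the origin. Likewise in $U_y$, solving for $y_1$ identifies the chart with $\A^3/\frac{1}{2}(1,1,1)$ and produces a $\frac{1}{2}(1,1,1)$ point at the origin. The charts $U_z$ and $U_u$ are quotient-free, and the leading-order parts of their strict transforms can only produce Gorenstein singularities.

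Since cyclic quotient terminal threefold singularities are $\Q$-factorial, neither cyclic quotient point can host a small contraction. Moreover, $W_1$ itself cannot be $\Q$-factorial, since otherwise the $\Q$-factorialization $Y\to W_1$ would be an isomorphism and there would be no flop $Y\dashrightarrow Y'$. Hence $W_1$ contains at least one non-$\Q$-factorial Gorenstein point $P$, and the flop $Y\dashrightarrow Y'$ lies over $P$. Because $X$ is smooth we have $GorE(X/W)=-\infty$, and Lemma \ref{gore} gives $GorE(Y/W)=-\infty$; consequently $Y$ has no Gorenstein singular point in its exceptional locus over $W$. This forces any Gorenstein singular point of $W_1$ to coincide with the small-resolved point $P$, so $W_1$ carries exactly one Gorenstein singular point; the $\Q$-factorialization $Y\to W_1$ is an isomorphism over the two cyclic quotients and small over $P$; and $Y\dashrightarrow Y'$ is a smooth flop with the $\frac{1}{3}(2,1,1)$ and $\frac{1}{2}(1,1,1)$ singularities inherited from $W_1$.

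The main obstacle is to identify $P$ analytically as a $cA_2$ point, rather than some other Gorenstein terminal type. For this I would return to the explicit strict transform in $U_z$ (or $U_u$) and use the constraint $u^3\in g_3$ or $u^5\in h_5$, together with the $cE_7$ normal form, to show that after an appropriate change of coordinates at $P$ the leading part of the equation becomes $xy+z^3+(\text{higher})$. The remaining Gorenstein terminal possibilities ($cA_n$ with $n\neq 2$, $cD$, or $cE$) would then be ruled out by comparing the discrepancy-one divisor count predicted by Lemma \ref{ssflop} applied locally at $P$ with what the chart computation allows, thereby pinning down $P$ as $cA_2$ and completing the proof.
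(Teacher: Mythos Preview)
Your overall strategy mirrors the paper's, but there is a concrete gap in the chart analysis. In $U_x$ and $U_y$ you claim one can solve for the blow-up coordinate and identify each chart globally with $\A^3$ modulo a cyclic group; this is only valid \emph{locally near the origin}. In the paper's computation the strict transform on $U_y$ reads
\[ y(x^2+1)+z^3+g'(y,z,u)=0\ \subset\ \A^4_{(x,y,z,u)}/\tfrac{1}{2}(1,0,1,1), \]
and one can solve for $y$ only where $x^2+1\neq 0$. At the point with $x^2+1=0$ and $y=z=u=0$ (where the $\Z/2$-action is already free), setting $x'=x-x_0$ gives a local equation of the form $x'y+z^3+(\text{higher})$, which is visibly a $cA_2$ singularity. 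This is exactly the Gorenstein point $P$; the paper simply reads it off from this chart. Once $P$ is exhibited, your $GorE(Y/W)=-\infty$ argument (which is correct) forces $P$ to be non-$\Q$-factorial, and the flop $Y\dashrightarrow Y'$ is the smooth flop over $P$.

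Your plan to locate $P$ in $U_z$ or $U_u$ cannot succeed: in the exceptional $\Pp(3,2,1,1)$ the point $P$ sits at $[X:Y:Z:U]=[\ast:\ast:0:0]$, so $Z=U=0$ and $P$ is not visible in either of those charts. (The same oversight affects your description of $U_x$: the equation there is $x(1+y^3)+yz^3+\cdots=0$, and the $cA_2$ point reappears where $1+y^3=0$.) Relatedly, the sentence ``$W_1$ itself cannot be $\Q$-factorial, since otherwise \ldots\ there would be no flop $Y\dashrightarrow Y'$'' is circular, as the existence of the flop is part of what the lemma asserts. The paper's order of argument --- first exhibit $P$ by direct computation in $U_y$, then invoke $GorE$ to conclude non-$\Q$-factoriality --- avoids this and makes the $cA_2$ identification immediate rather than an ``obstacle''.
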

\begin{proof}
	Locally $W$ is defined by $x^2+y^3+yz^3+g(y,z,u)$. By Lemma \ref{cew} we know that $W_1\rightarrow W$ is obtained by weighted blowing-up
	the weight $(3,2,1,1)$. It is easy to see that there are two non-Gorenstein points, one of them is a $\frac{1}{3}(2,1,1)$ point and the
	other one is a $\frac{1}{2}(1,1,1)$ point. Note that the chart $U_y\subset W_1$ is defined by 
	\[(y(x^2+1)+z^3+g'(y,z,u)=0)\subset\A^4_{(x,y,z,u)}/\frac{1}{2}(1,0,1,1).\] One can see that the point $P=(\pm1,0,0,0)$ is a $cA_2$ point.
	Since $\gd(Y)\leq \gd(X)=0$, $P$ is not $\Q$-factorial and $Y\dashrightarrow Y'$ should be a smooth flop over $P$.\par
	Let $H_W=(u=0)\subset W$ be a Du Val section. Then $H_{W_1}$ passes through the non-$\Q$-factorial point of $W_1$, hence
	$H_Y$ contains flopping curves of $Y\dashrightarrow Y'$. Let $C_Y$ be the flipping curve of $Y=Y_{(0)}\dashrightarrow Y_{(1)}$.
	Then $C_Y$ intersects a flopping curve non-trivially, hence $C_Y$ intersects $H_Y$ non-trivially at a smooth point. 
	%$C_{W_1}\equiv \Gamma_{W_1}=A.E$ for some ample $A\subset W_1$ need $C_Y\equiv\Gamma_Y\LeftRightarrow C_Y.D=\Gamma_Y.D$ for some
	%ample D\subset Y. Since $C_Y|_{E_Y}\equiv\Gamma_Y|_{E_Y}$ for $C_{W_1}|_E$ and $\Gamma_{W_1}|_E$ are divisors, $C_Y.D=\Gamma_Y.D$.	
	Now there is an index $I=(0,...,0)$ and a sequence of $w$-morphisms \[Y_I\rightarrow ...\rightarrow Y_{(0,0)}\rightarrow Y_{(0)}=Y\]
	such that $K_{Y_I}.C_{Y_I}=0$, hence $H_{Y_I}.C_{Y_I}=0$. We also know that $H_{Y_I}$ intersects $C_{Y_I}$ non-trivially, hence
	$C_{Y_I}\subset H_{Y_I}$ which means that $C_Y\subset H_Y$. Now $C_{W_1}\subset H_{W_1}.E$ and $H_{W_1}.E=(z=u=0)\subset\Pp(3,2,1,1)$
	is irreducible where $E=exc(W_1\rightarrow W)$. Hence $C_{W_1}=H_{W_1}.E$ and one can see that $C_{W_1}$ contains the two non-Gorenstein
	points. Hence $C_Y$ contains the two non-Gorenstein points.
\end{proof}
\begin{lem}\label{a2flop}
	Let $Y$ be a terminal threefold contains two $\frac{1}{2}(1,1,1)$ points. Assume that $Y\rightarrow U$ is a birational map contracting
	$C\cong\Pp^1$ to a point such that $-K_Y$ is nef over $U$ and the two $\frac{1}{2}(1,1,1)$ points are contained in $C$. Then $K_Y.C=0$.
	Moreover, assume that there exists a flop $Y\dashrightarrow Y'$ over $U$ such that the general elephant $H_U\in|-K_U|$ has $A$-type
	singularities, then we have the following factorization
	\[ Y\leftarrow Z_{(1)}\dashleftarrow Z_{(0)}=Z'_{(0)}\dashrightarrow Z'_{(1)}\rightarrow Y',\]
	where $Z_{(1)}\rightarrow Y$ as well as $Z'_{(1)}\rightarrow Y'$ are $w$-morphisms and $Z_{(0)}\dashrightarrow Z_{(1)}$ as well as
	$Z'_{(0)}\dashrightarrow Z'_{(1)}$ has a factorization given in Lemma \ref{3flip} (1).
	%\[\vc{\xymatrix{ Z_{(0,1,1)} \ar[d]_c & Z_{(0,1,0)} \ar@{-->}[l] \ar[d]^w & & & Z'_{(0,1,0)} \ar@{-->}[r] \ar[d]_w & Z'_{(0,1,1)} \ar[d]^c \\
	%	Z_{(0,2)} \ar[d]_w & Z_{(0,1)} & Z_{(0,0)} \ar[d]^w \ar@{-->}[l] & Z'_{(0,0)} \ar[d]_w \ar@{-->}[r] & Z'_{(0,1)} & Z'_{(0,2)} \ar[d]^w \\
	%	Z_{(1)} \ar[d]_w & & Z_{(0)} \ar@{}[r]|= & Z'_{(0)} & & Z'_{(1)}\ar[d]^w \\ Y & & & & & Y'}}\]
	%such that every dash maps are Atiyah flops.
\end{lem}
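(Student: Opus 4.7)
The plan is first to show $K_Y\cdot C=0$ so that $Y\to U$ is a flopping contraction, and then to construct the six-term factorization by applying a $w$-morphism at one of the two $\frac{1}{2}(1,1,1)$ points on each side of the flop and invoking Lemma~\ref{3flip}~(1) for the resulting flips.

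To compute $K_Y\cdot C$, observe that since $2K_Y$ is Cartier at each $\frac{1}{2}(1,1,1)$ point and $C$ passes through $P_1$ and $P_2$ smoothly, a $w$-morphism calculation at one such point gives $K_{Z_{(1)}}\cdot C_1=K_Y\cdot C+\tfrac{1}{2}$, so the local contribution to $K_Y\cdot C\pmod{\Z}$ at each $\frac{1}{2}(1,1,1)$ equals $\tfrac{1}{2}$; summing over the two points shows $K_Y\cdot C\in\Z$. Combined with $K_Y\cdot C\leq 0$ from the nefness of $-K_Y$ over $U$, I am reduced to ruling out $K_Y\cdot C\leq -1$. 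For this I invoke the Koll\'{a}r--Mori classification of three-dimensional extremal flipping neighborhoods, which shows that a $K$-negative extremal small contraction of a terminal threefold cannot have two non-Gorenstein points of equal axial multiplicity $r=2$ on its flipping curve (the two-point type $k2A$ forces coprime indices, and the other types admit at most one non-Gorenstein point on the curve). Hence $K_Y\cdot C=0$, and the flop $Y\dashrightarrow Y'$ exists by relative MMP over $U$.

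Let $Z_{(1)}\to Y$ be the $w$-morphism at $P_1$ with exceptional divisor $F_1$. Since $\frac{1}{2}(1,1,1)$ has depth one, $Z_{(1)}$ is smooth over $P_1$ and the only non-Gorenstein point on the strict transform $C_1$ of $C$ is $P_2$. The intersection $F_1\cdot C_1=1$, computed via the local index-two cover, so $K_{Z_{(1)}}\cdot C_1=\tfrac{1}{2}>0$. Thus $[C_1]$ spans a $K_{Z_{(1)}}$-positive extremal ray of $\overline{NE}(Z_{(1)}/U)$; its contraction $Z_{(1)}\to V_{(0)}$ is small, and the inverse flip $Z_{(0)}\dashrightarrow Z_{(1)}$ over $V_{(0)}$ exists by MMP. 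Using Proposition~\ref{chfp} together with depth bookkeeping, one verifies $dep(Z_{(0)})=2$ and that $Z_{(0)}$ carries a single depth-two singular point on the flipping curve, so Lemma~\ref{3flip}~(1) applies and yields the required six-step factorization.

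The symmetric construction on the $Y'$-side produces $Z'_{(1)}\to Y'$ and a flip $Z'_{(0)}\dashrightarrow Z'_{(1)}$ of the same form. Since the $w$-morphisms at $P_1$ and its flop-correspondent $P_1'\in Y'$ fit together with the flop over $U$, the two $K$-positive extremal small contractions share the target $V_{(0)}$, and uniqueness of flips over $V_{(0)}$ identifies $Z_{(0)}\cong Z'_{(0)}$. The main obstacle is the first step, namely ruling out the flipping alternative via the Koll\'{a}r--Mori classification; the identification $Z_{(0)}\cong Z'_{(0)}$ in the last step is secondary but also requires careful tracking of the birational models over $U$.
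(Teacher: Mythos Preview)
Your argument for $K_Y\cdot C=0$ is essentially correct, though your invocation of the Koll\'ar--Mori classification (``$k2A$ forces coprime indices'') is imprecise. The paper uses the more direct route via Mori's local invariant $w_P(0)$ from \cite[(2.3), Theorem~4.9]{mo}: each $\frac{1}{2}(1,1,1)$ point contributes $w_{P_i}(0)\geq\frac{1}{2}$, so the sum is $\geq 1$, which is forbidden on a flipping curve.

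The real divergence, and the real gap, is in the construction of the factorization. You work from the top down: take a $w$-morphism $Z_{(1)}\to Y$ at $P_1$, anti-flip along $C_1$ to obtain $Z_{(0)}$, do the same on the $Y'$ side, and then try to match $Z_{(0)}\cong Z'_{(0)}$. Two steps here are not justified. First, ``depth bookkeeping'' from Proposition~\ref{chfp} only gives $dep(Z_{(0)})\geq 2$; it does not tell you that $Z_{(0)}$ has a \emph{single} depth-two singular point, which is what Lemma~\ref{3flip} requires. Anti-flips do not come with such structural control for free. Second, your identification $Z_{(0)}\cong Z'_{(0)}$ rests on the claim that the two anti-flip bases coincide, but you have constructed them separately over $Y$ and over $Y'$; there is no a priori reason the targets $V_{(0)}$ and $V'_{(0)}$ agree, and ``uniqueness of flips'' only helps once that is known.

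The paper avoids both difficulties by working from below, over $U$. Since $U$ admits exactly two exceptional divisors of discrepancy $\frac{1}{2}$ and one of discrepancy $1$, one reads off that $U$ is a $cA/2$ point of the explicit form
\[(xy+z^4+u^2+g(z,u)=0)\subset\A^4_{(x,y,z,u)}/\tfrac{1}{2}(1,1,1,0).\]
The weighted blow-up with weight $\frac{1}{2}(3,1,1,2)$ is then a $w$-morphism $Z\to U$ whose only singular point is a $\frac{1}{3}(1,1,2)$ cyclic quotient. This single computation delivers everything at once: $Z$ is $\Q$-factorial, so in the scheme of Corollary~\ref{ff} no intermediate flop is needed and $Z_{(0)}=Z=Z'=Z'_{(0)}$ automatically; and $dep(Z)=2$ with a single depth-two point, so Lemma~\ref{3flip}(1) applies directly after the depth count $dep(Z_{(k)})=dep(Y)-1=1$ forces $k=1$. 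Your approach could in principle be completed, but only by proving that your anti-flip $Z_{(0)}$ coincides with this explicit $Z$, at which point you have essentially reproduced the paper's argument.
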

\begin{proof}
 	Let $P_1$ and $P_2$ be the two cyclic quotient points. We have that $Y\rightarrow U$ can not be a flipping contraction because
 	$\sum_{i=1,2}w_{P_i}(0)\geq 1$ (please see \cite[(2.3), Theorem 4.9]{mo}). Hence it is a flopping contraction.\par
	Now assume that we have a flop $Y\dashrightarrow Y'$ over $Q\in U$ and assume that a general elephant near $Q$ has $A$-type singularities.
	Then $Q$ is a $cA/2$ point. There are exactly two discrepancy $\frac{1}{2}$ exceptional divisors and one discrepancy $1$ exceptional
	divisor over $Q\in U$. By \cite[Proposition 3.4]{me} we may assume that $Q$ is defined by
	\[ (xy+z^4+u^2+g(z,u)=0)\subset \A^4_{(x,y,z,u)}/\frac{1}{2}(1,1,1,0).\]
 	Let $Z=Z'$ be the weighted blow-up with the weight $\frac{1}{2}(3,1,1,2)$, then the only singular point on $Z$ is a $\frac{1}{3}(1,1,2)$
 	point. Hence $dep(Z)=2$.\par
 	On the other hand, $Z_{(k)}\rightarrow Y$ is a $w$-morphism, so $dep(Z_{(k)})=dep(Y)-1=1$. This implies that $k=1$. The factorization
 	of $Z_{(0)}\dashrightarrow Z_{(1)}$ is given by Lemma \ref{3flip} (1).
\end{proof}
\begin{pro}\label{ce7}$ $
	Assume that $X\dashrightarrow X'$ is a simple smooth flop over $W$ such that $W$ has a $cE_7$ singularity.
	Then there exists a factorization
	\[  X\leftarrow \bar{Y}\leftarrow \tl{Y}\dashleftarrow Y_{(0,2)} \dashleftarrow Y_{(0,1)} \dashleftarrow Y_{(0,0)}
	\rightarrow Y_{(0)}=Y \dashrightarrow Y'=Y'_{(0)}\leftarrow Y'_{(0,0)} \]
	\[\dashrightarrow Y'_{(0,1)}\dashrightarrow Y'_{(0,2)}\dashrightarrow \tl{Y'}\rightarrow \bar{Y'}\rightarrow X'\]
	such that $\tl{Y}\rightarrow\bar{Y}\rightarrow X$ is a sequence of blowing-up smooth curves,
	$Y_{(0,1)}\dashrightarrow Y_{(0,2)}$ and $Y_{(0,2)}\dashrightarrow \tl{Y}$ are given in Lemma \ref{pago},
	$Y_{(0,0)}\dashrightarrow Y_{(0,1)}$ is a flop given in Lemma \ref{a2flop} and $Y\dashrightarrow Y'$ is a smooth flop over a $cA_2$ point.
	The diagram of $Y'\dashrightarrow X'$ is symmetric.
\end{pro}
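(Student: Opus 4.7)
The plan is to apply Theorem~\ref{chf} to the MMP $Y\dashrightarrow X$ over $W$ and identify the resulting Chen-Hacon diagram with the chain claimed in the statement. By Lemma~\ref{lce7} one has $dep(Y)=3$: specifically, $Y$ carries one $\frac{1}{3}(2,1,1)$ point $P_1$ and one $\frac{1}{2}(1,1,1)$ point $P_2$, both lying on the flopping curve of $Y\to U_{(0)}$ (the image of the $cA_2$ point). The flop $Y\dashrightarrow Y'$ itself falls under the scope of Proposition~\ref{cd} (the $cA_2$ case), so by symmetry it suffices to factorize the morphism $Y\dashrightarrow X$ into the chain $Y\leftarrow Y_{(0,0)}\dashrightarrow Y_{(0,1)}\dashrightarrow Y_{(0,2)}\dashrightarrow \tilde Y\to\bar Y\to X$.

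First I would apply the first step of the Chen-Hacon factorization of the initial flip $Y=Y_{(0)}\dashrightarrow Y_{(1)}$, namely a $w$-morphism $Y_{(0,0)}\to Y$. I choose the $w$-morphism extracting a discrepancy-$\frac{1}{3}$ divisor over $P_1$, which exists by the classification of divisorial contractions to a $\frac{1}{3}(2,1,1)$ point. A local calculation shows that $Y_{(0,0)}$ carries the original $\frac{1}{2}(1,1,1)$ point $P_2$ together with one further $\frac{1}{2}(1,1,1)$ point sitting at the intersection of the new exceptional divisor with the proper transform of the flopping curve; in particular both non-Gorenstein points lie on the flopping curve of the induced contraction $Y_{(0,0)}\to U_{(0,0)}$. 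Lemma~\ref{a2flop} then applies and furnishes the flop $Y_{(0,0)}\dashrightarrow Y_{(0,1)}$ together with its own internal factorization.

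Next I would show that each of the subsequent flips $Y_{(0,1)}\dashrightarrow Y_{(0,2)}$ and $Y_{(0,2)}\dashrightarrow \tilde Y$ satisfies the hypotheses of Lemma~\ref{pago}, i.e.\ the flipping curve passes through exactly one singular point, which is a depth-$1$ ($=\frac{1}{2}(1,1,1)$) point. The reason is that each such flip strictly decreases depth by Proposition~\ref{chfp}, while Lemma~\ref{a2flop} interchanges the two cyclic quotients without introducing Gorenstein singularities, so only one $\frac{1}{2}(1,1,1)$ point can remain on the next flipping curve. To exclude additional non-Gorenstein points on these flipping curves I invoke Lemma~\ref{ge}: a general elephant of $U$ is Du Val, and its partial resolutions $H_{U_{(0,i)}}\to H$ can only have $A_1$ nodes in the relevant places. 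Each application of Lemma~\ref{pago} then yields an Atiyah flop, a smooth-curve blowup, and a $w$-morphism, exactly matching the picture in the proposition.

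Finally $\tilde Y$ is Gorenstein along the remaining exceptional locus, and $\tilde Y\to\bar Y\to X$ is then identified with the two smooth-curve blowups corresponding to the tail $(0,-3),(-1,-2),(-1,-1)$ of the $cE_7$ normal bundle sequence from Pinkham/Katz-Morrison. The diagram from $\tilde{Y'}$ to $X'$ is obtained identically by the symmetry of the flop. The main technical obstacle will be to verify rigorously, at each intermediate stage $Y_{(0,i)}$, that the singular configuration is exactly the simple one required by Lemmas~\ref{a2flop} and \ref{pago}; this is where the general elephant bookkeeping of Lemmas~\ref{ge1}, \ref{ge2}, \ref{ge} together with the Gorenstein-elephant-height control of Proposition~\ref{geh} must be used carefully.
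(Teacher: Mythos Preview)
Your outline follows the paper's strategy, but there is a genuine gap at the very first step. You assert, citing Lemma~\ref{lce7}, that both singular points $P_1$ and $P_2$ lie on the flipping curve of $Y_{(0)}\dashrightarrow Y_{(1)}$. Lemma~\ref{lce7} does not say this: it only locates the two cyclic quotient points on $Y$ and identifies the flop $Y\dashrightarrow Y'$ as a smooth flop over a $cA_2$ point. Whether the flipping curve of the \emph{first flip} passes through one or both of $P_1,P_2$ is a separate question, and it is the crux of the argument. The paper settles it by a general-elephant calculation: one checks that $H_{W_1}$ contains the non-$\Q$-factorial $cA_2$ point, hence $H_Y$ contains the flopping curve $\Gamma$ of $Y\to W_1$; then the flipping curve $C$ meets $\Gamma$ at a smooth point, and an induction through the Chen--Hacon tower forces $C\subset H_Y$. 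Only after computing that $H_Y\to H$ extracts four curves and that the elephant of $U_{(0)}$ is $A_4$ can one invoke Mori's classification \cite[Theorem~2.2]{km2} to conclude the flip is semistable $IA+IA$, i.e.\ $C$ carries both singular points. Without this, you cannot justify taking the $w$-morphism over $P_1$, nor that $Y_{(0,0)}$ lands in the two-$\frac{1}{2}(1,1,1)$ configuration required by Lemma~\ref{a2flop}.

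A second gap concerns the tail of the factorization. You assume the chain has the shape $Y_{(0,0)}\dashrightarrow Y_{(0,1)}\dashrightarrow Y_{(0,2)}\dashrightarrow\tilde Y\to\bar Y\to X$, but the Chen--Hacon diagram does not give this for free: one must determine $k_{(0)}$ and the depths of $Y_{(0,i)}$ and $Y_{(1)}$. The paper rules out $k_{(0)}=1$ using \cite[Theorem~5]{ka2} (no divisorial contraction to a curve through cyclic quotient points), rules out $dep(Y_{(1)})=1$ via the length of the normal bundle sequence, and then splits into two subcases ($k_{(0)}=2$ with $dep(Y_{(1)})=2$, or $k_{(0)}=3$ with $dep(Y_{(1)})=0$), each producing $\tilde Y$ and $\bar Y$ in a different way. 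Your appeal to ``depth strictly decreases'' and to Lemma~\ref{ge} does not by itself exclude, for instance, the possibility that $Y_{(0,1)}\to Y_{(1)}$ is already a divisorial contraction, or that a flipping curve carries both $\frac{1}{2}(1,1,1)$ points (the latter is excluded not by depth count but by Lemma~\ref{a2flop} itself, which says such a curve is $K$-trivial). These case analyses are where the actual work lies.
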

\begin{proof}
	By Lemma \ref{lce7} there are two cyclic quotient points on $Y_{(0)}$ which have of indices $2$ and $3$ respectively and the flopping curve
	$C_Y$ of $Y=Y_{(0)}\dashrightarrow Y_{(1)}$ passes through the both singular point. By the classification \cite[Theorem 2.2]{km2} we know
	that we are in the case semistable $IA+IA$, hence a general elephant $H_W\in|-K_W|$ has $A$-type singularities.\par
	Now $Y_{(0,0)}$ contains two $\frac{1}{2}(1,1,1)$ point. Since $H_W$ has $A$-type singularities, $H_{U_{(0,0)}}$ has $A$-type singularities.
	Thus by Lemma \ref{a2flop} we know that $Y_{(0,0)}\dashrightarrow Y_{(0,1)}$ is a flop and its factorization is given by Lemma \ref{a2flop}.
	\begin{cl}
		We have $k_{(0)}\geq 2$. There exists a flip $Y_{(0,2)}\dashrightarrow \tl{Y}$ and a sequence of blowing-up smooth curves
		$\tl{Y}\rightarrow \bar{Y}\rightarrow X$. 
	\end{cl}
	To prove the claim, assume first that $k_{(0)}=1$. In this case we have a divisorial contraction to a curve $Y_{(0,1)}\rightarrow Y_{(1)}$
	(\cite[Remark 3.4]{ch}). Let $\Gamma\subset Y_{(1)}$ be this curve. Then there are two singular fibers over $\Gamma$ because there are
	two singular points on $Y_{(0,1)}$ which are connected by the flopped curve of the flop $Y_{(0,0)}\dashrightarrow Y_{(0,1)}$, and the
	flopped curve maps bijectively to $\Gamma$. On the other hand, we have $dep(Y_{(1)})<dep(Y_{(0)})=3$, hence $dep(Y_{(1)})\leq 2$. This
	implies that $Y_{(1)}$ contains two depth $1$ points, which are $\frac{1}{2}(1,1,1)$ points. However there is no divisorial contraction
	to a curve which contains cyclic quotient points by \cite[Theorem 5]{ka2}. This proves that $k_{(0)}\geq 2$.\par
	Now assume that $k_{(0)}=2$. There is a flip followed by a divisorial contraction
	$Y_{(0,1)}\dashrightarrow Y_{(0,2)}\rightarrow Y_{(1)}$. Notice that if $dep(Y_{(1)})=1$ then the $K_Y$-MMP over $W$ is given by
	\[ Y=Y_{(0)}\dashrightarrow Y_{(1)}\dashrightarrow Y_{(2)}\rightarrow X\]
	such that $Y_{(1)}\dashrightarrow Y_{(2)}$ is given by Lemma \ref{pago} and $Y_{(2)}\rightarrow X$ is a blowing-up a smooth curve on $X$.
	As shown in the proof of Lemma \ref{lce6}, in this case the normal bundle sequence of the flop $X\dashrightarrow X'$ has only length three.
	This leads to a contradiction. Hence $dep(Y_{(1)})\neq 1$ and we have $dep(Y_{(1)})$ is either $0$ or $2$. Assume that $dep(Y_{(1)})=0$,
	then $Y_{(1)}$ is smooth. Hence $Y_{(0,2)}$ is also smooth and $Y_{(0,2)}\rightarrow Y_{(1)}$ is divisorial contraction to a curve because
	the exceptional divisor of this divisorial contraction has discrepancy two over $X$, hence discrepancy one over $Y_{(1)}$. This implies
	that the two singular points of $Y_{(0,1)}$ are both contained in the flipping curve of $Y_{(0,1)}\dashrightarrow Y_{(0,2)}$. However it is
	impossible since the both two singular points are $\frac{1}{2}(1,1,1)$ points and there are no flip along two $\frac{1}{2}(1,1,1)$ points
	by Lemma \ref{a2flop}. So $dep(Y_{(1)})$ should be $2$.\par
	In this case we have $dep(Y_{(0,2)})=1$ and $Y_{(0,2)}\rightarrow Y_{(1)}$ is a $w$-morphism. In other words, we have
	$Y_{(0,2)}=Y_{(1,0)}$. Let $\tl{Y}=Y_{(1,1)}$ and $\bar{Y}=Y_{(2)}$. We know that $Y_{(2)}$ can not has depth one or the length of the
	normal bundle sequence should be $3$. Thus $Y_{(2)}$ is smooth and so $\bar{Y}=Y_{(2)}\rightarrow X$ is a blowing-up a smooth curve. Since
	$dep(Y_{(0,2)})=dep(Y_{(1,0)})=1$, $Y_{(1,1)}$ is also smooth and $Y_{(1,1)}\rightarrow Y_{(2)}$ is also a blowing-up a smooth curve.\par
	Finally if $k_{(0)}>2$ then it is easy to see that $k=3$ and $dep(Y_{(0,i)})=3-i$ for $i=2$, $3$. As we saw before, $dep(Y_{(1)})$
	can not be one. Hence $dep(Y_{(1)})=0$. This implies that there exists a divisorial contraction $Y_{(1)}\rightarrow X$
	which is a blowing-up a smooth curve. Let $F=exc(Y_{(0,3)}\rightarrow Y_{(1)})$. One can check that $a(F,X)=2$, hence
	$a(F,Y_{(1)})=1$ and so $Y_{(0,3)}\rightarrow Y_{(1)}$ is also a blowing-up a smooth curve. We let
	$\tl{Y}=Y_{(0,3)}$ and $\bar{Y}=Y_{(1)}$ and the claim is proved.\par
	Notice that $Y_{(0,i)}$ has only $\frac{1}{2}(1,1,1)$ singularities for $i=1$, $2$. Hence the factorization of the two flips
	$Y_{(0,1)}\dashrightarrow Y_{(0,2)}$ and $Y_{(0,2)}\dashrightarrow \tl{Y}$ are given by Lemma \ref{pago}.
	$Y\dashrightarrow Y'$ is a smooth flop over a $cA_2$ point by Lemma \ref{lce7}.
\end{proof}
\subsection{Factorizing $cE_8$ flops}
In this subsection we assume that $X\dashrightarrow X'$ is a simple smooth flop over $W$, such that $W$ has a $cE_8$ singularity.
\begin{lem}\label{le8}
	There exists a sequence of $w$-morphisms $W_8\rightarrow\cdots W_1\rightarrow W_0=W$ such that $W_8$ is smooth. $W_1$ has a
	non-$\Q$-factorial $cE/2$ singular point, $W_2$ has a non-$\Q$-factorial $cD/3$ singular point, $W_3$ has a $cAx/4$ singular point,
	$W_4$ has a $\frac{1}{5}(3,2,1)$ point, and
	$W_5$ has a $\frac{1}{3}(1,2,1)$ and a $\frac{1}{2}(1,1,1)$ point. $W_6$ and $W_7$ is obtained by resolving the $\frac{1}{3}(1,3,1)$
	singularity and $W_8$ is the blowing-up the $\frac{1}{2}(1,1,1)$ point. Moreover, let $E_i=exc(W_i\rightarrow W_{i-1})$, then one can
	compute $a(E_j,W_i)$ as in the following table:
	\[\vc{\begin{tabular}{l|ccccccccc}
	 & $W$ & $W_1$ & $W_2$ & $W_3$ & $W_4$ & $W_5$ & $W_6$ & $W_7$ \\\hline
	 $E_1$ & $1$ & - & - & - & - & - & - & - \\
	 $E_2$ & $2$ & $\frac{1}{2}$ & - & - & - & - & - & - \\
	 $E_3$ & $3$ & $1$ & $\frac{1}{3}$ & - & - & - & - & - \\
	 $E_4$ & $4$ & $\frac{3}{2}$ & $\frac{2}{3}$ & $\frac{1}{4}$ & - & - & - & - \\
	 $E_5$ & $5$ & $2$ & $1$ & $\frac{1}{2}$ & $\frac{1}{5}$ & - & - & - \\
	 $E_6$ & $2$ & $1$ & $\frac{2}{3}$ & $\frac{1}{2}$ & $\frac{2}{5}$ & $\frac{1}{3}$ & - & - \\
	 $E_7$ & $4$ & $2$ & $\frac{4}{3}$ & $1$ & $\frac{4}{5}$ & $\frac{2}{3}$ & $\frac{1}{2}$ & - \\
	 $E_8$ & $3$ & $\frac{3}{2}$ & $1$ & $\frac{3}{4}$ & $\frac{3}{5}$ & $\frac{1}{2}$ & $\frac{1}{2}$ & $\frac{1}{2}$ \\
	\end{tabular}}\]
\end{lem}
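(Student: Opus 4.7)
The plan is to construct the sequence $W_8\to\cdots\to W_0=W$ explicitly by choosing an appropriate weighted blow-up at each stage, and then to read the discrepancies off from these weights. Since $X\dashrightarrow X'$ is a simple smooth flop, Lemma \ref{ssflop} gives that $W$ has a unique exceptional divisor of discrepancy one, so Lemma \ref{cew} applies: we may write $W=(x^2+y^3+h_{\geq 5}(z,u)=0)$ with $u^5\notin h(z,u)$ (recall $g_3=h_4=0$ in the $cE_8$ case), and the $w$-morphism $W_1\to W$ is the weighted blow-up with weight $w_6=(3,2,2,1)$. A chart-by-chart inspection shows that $W_1$ has exactly one non-Gorenstein singular point, which is a $cE/2$ point; arguing as in the proof of Lemma \ref{lce6} this point is non-$\Q$-factorial, for otherwise the flopping contraction from $Y$ could not exist.

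Next I would iterate. At each stage the singular point on $W_i$ we care about is an explicit hypersurface or cyclic quotient singularity, and I select a specific weighted blow-up realising a $w$-morphism: $W_2\to W_1$ is a weighted blow-up of the $cE/2$ point producing a non-$\Q$-factorial $cD/3$ point; $W_3\to W_2$ is a weighted blow-up of the $cD/3$ point producing a $cAx/4$ point; $W_4\to W_3$ is a weighted blow-up of the $cAx/4$ point producing a terminal quotient of type $\frac{1}{5}(3,2,1)$; $W_5\to W_4$ is the standard blow-up of $\frac{1}{5}(3,2,1)$, which yields two cyclic quotient singularities of types $\frac{1}{3}(1,2,1)$ and $\frac{1}{2}(1,1,1)$; and $W_6, W_7, W_8$ are the standard weighted blow-ups in a feasible resolution of these two quotient points. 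At each step one uses the formula $a(E,\cdot)=\sum w_i - w(f) - 1$ (with the quotient-index modification when applicable) to verify that the exceptional divisor has the minimal discrepancy required by a $w$-morphism.

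The discrepancy table is then bookkeeping. The diagonal entries $a(E_i,W_{i-1})$ are immediate from the weights. For entries above the diagonal, I would use the pullback formula $a(E_j,W_i)=a(E_j,W_{i-1})-m_{i,j}\,a(E_i,W_{i-1})$, where $m_{i,j}$ is the multiplicity of the proper transform of $E_j$ along the new exceptional divisor $E_i$; equivalently one may compute $a(E_j,W)$ once (as the weighted multiplicity of the defining equation minus a correction) and propagate outwards.

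The main obstacle is selecting the correct $w$-morphism at the early non-Gorenstein stages (the $cE/2$, $cD/3$, and $cAx/4$ points), where several weighted blow-ups are a priori plausible and one must rule out the wrong choices. I would do this by a Lemma \ref{cEw}-style argument: whenever an incorrect weight is chosen, a further weighted blow-up exhibits an extra exceptional divisor of discrepancy one over $W$, contradicting the uniqueness coming from $GorE(X/W)=-\infty$ via Lemma \ref{ssflop}. Once these weights are pinned down, identifying the singularity type after each blow-up and filling out the remainder of the table is a routine chart computation.
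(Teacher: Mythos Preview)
Your proposal follows essentially the same strategy as the paper: construct each $W_i$ as an explicit weighted blow-up, rule out incorrect weights via the uniqueness of the discrepancy-one divisor over $W$, and then compute the table. Your treatment of $W_1$ (weight $(3,2,2,1)$, the $cE/2$ point, and its non-$\Q$-factoriality via Mori's classification as in Lemma~\ref{lce6}) matches the paper.

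There is, however, one genuine gap. The lemma asserts that the $cD/3$ point on $W_2$ is non-$\Q$-factorial, and your proposal gives no mechanism for this. The ``extra discrepancy-one divisor over $W$'' trick you describe is used in the paper only to eliminate bad monomials (e.g.\ to show $u^4\notin g'$ and that $zu^2\notin g''$). It does \emph{not} prove non-$\Q$-factoriality of the $cD/3$ point: for that the paper constructs $W_3$ and $W_4$ first, locates a specific exceptional divisor $F$ with $a(F,W_2)=1$ and $a(F,W_1)=\tfrac{3}{2}$, and then invokes Lemma~\ref{nqf}, which is an MMP/negativity-lemma argument comparing discrepancies on a $\Q$-factorialisation. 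The analogy with Lemma~\ref{lce6} breaks down here because a $cD/3$ point on the flipping curve is \emph{not} excluded by Mori's classification in the way a $cE/2$ point is, so you cannot recycle that argument. You will need either to supply something like Lemma~\ref{nqf} or to find an alternative obstruction; without it the claim that $W_2$ has a non-$\Q$-factorial $cD/3$ point is unproved.

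A smaller point: the paper's weight selection at the $cD/3$ stage involves a case split (weight $\tfrac{1}{3}(3,2,4,1)$ versus $\tfrac{1}{3}(6,5,4,1)$ depending on $\mathrm{wt}_w g''$), and one of the branches is only eliminated \emph{after} the non-$\Q$-factoriality claim forces $W_2$ to have no Gorenstein singularities. So the monomial bookkeeping and the $\Q$-factoriality argument are intertwined rather than sequential; your proposal should reflect that.
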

\begin{proof}
	We may assume that $W$ is defined by $x^2+y^3+z^5+g(y,z,u)$ such that $\frac{\partial^2}{\partial y^2}g(y,z,u)=0$ and $u^5\not\in g(y,z,u)$.
	By Lemma \ref{cew}, we know that $W_1\rightarrow W$ is obtained by weighted blowing-up $(3,2,2,1)$. The only non-Gorenstein singular point
	on $W_1$ is a $cE/2$ point. If this point is $\Q$-factorial, then the only non-Gorenstein point on $Y$ is this $cE/2$ point, so the
	flipping curve of $Y=Y_0\dashrightarrow Y_1$ should pass through this point. However it is impossible because of the classification
	\cite[Theorem 2.2]{km2}. So the $cE/2$ point is not $\Q$-factorial. In this case $W_1$ do not have any Gorenstein singular point since
	$\gd(Y)\leq \gd(X)=0$. In particular, we have that at least one of the following monomials \[yu^4,zu^4,u^6,u^7\] appears in $g(y,z,u)$.\par
	Now the chart $U_{z,1}\subset W_1$ is defined by \[ (x^2+y^3+z^4+g'(y,z,u)=0)\subset \A^4_{(x,y,z,u)}/\frac{1}{2}(1,0,1,1)\]
	and at least one of the following following monomials \[ yu^4,u^4,u^6,zu^7\] appears in $g'(y,z,u)$. Assume that $u^4\in g'(y,z,u)$,
	we choose a suitable change of coordinates $z\mapsto z+\lambda u$ so that $u^4\not\in g'(y,z+\lambda u,u)$, and construct the weighted
	blow-up with weight $\frac{1}{2}(3,2,3,1)$. Under this construction we get a $w$-morphism $W'\rightarrow W_1$ and one can see that the
	exceptional divisor of $W'\rightarrow W_1$ has discrepancy one over $W$. This leads a contradiction since there should be only one
	discrepancy one exceptional divisor over $W$. Hence $u^4\not\in g'(y,z,u)$.\par
	Now let $W_2\rightarrow W_1$ be the weighted blow-up with weight $\frac{1}{2}(3,2,3,1)$. The only non-Gorenstein singular point
	on $W_2$ is the origin of the chart $U_{z,2}\subset W_2$ which is defined by
	\[ (x^2+y^3+z^3+g''(y,z,u)=0)\subset \A^4_{(x,y,z,u)}/\frac{1}{3}(0,2,1,1),\] such that $yu^4$, $u^6$ or $z^2u^7\in g''(y,z,u)$.
	Let $w(y,z,u)=\frac{1}{3}(2,4,1)$ be a weight. The monomials with $w$-weights less than twelve are the following:
	\begin{enumerate}[(a)]
	\item $w$-weight equals to $6$: $yu^4$, $u^6$ and $zu^2$.
	\item $w$-weight equals to $9$: $yzu^3$, $yu^7$, $z^2u$, $zu^5$ and $u^9$. Notice that
		$yu^7$ and $u^9$ can not appear in $g''(y,z,u)$ because $g''(y,z,u)=g'(yz,z^{\frac{3}{2}},z^{\frac{1}{2}}u)/z^3$. 
	\end{enumerate}
	One can see that
	\begin{enumerate}[(i)]
	\item If $\w_wg''(y,z,u)<12$, then the weighted blow-up with weight $\frac{1}{3}(3,2,4,1)$ defines a $w$-morphism $W_3\rightarrow W_2$.
	\item If $\w_wg''(y,z,u)\geq 12$, then the weighted blow-up with weight $\frac{1}{3}(6,5,4,1)$ defines a $w$-morphism $W_3\rightarrow W_2$.
	\end{enumerate}
	Notice that there are exactly one exceptional divisor which has discrepancy $\frac{1}{2}$ over $W_1$ because any exceptional divisor 
	which do not appear on $W_2$ has discrepancy greater than $\frac{1}{2}\cdot\frac{4}{3}=\frac{2}{3}$ over $W_1$. Hence
	the $Y$ has only one non-Gorenstein point.
	\begin{cl}
		The $cD/3$ point on $W_2$ is not $\Q$-factorial.
	\end{cl}	
	To prove the claim, consider the following three possibilities:
	\begin{description}
		\item[Case (i-1)] $zu^2\in g''(y,z,u)$. We will show that this case won't happen. In this case the chart $U_{z,3}\subset W_3$
			is defined by \[ (x^2+y^3+z^2+g'''(z,u)=0)\subset \A^4_{(x,y,z,u)}/\frac{1}{4}(3,2,1,1),\] with $u^2\in g'''(y,z,u)$.
			After a suitable change of coordinates $z\mapsto z+\lambda u$ we may assume that $U_{z,3}$ is defined by $x^2+y^3+zu$. Let
			$W_4\rightarrow W_3$ be the weighted blow-up with weight $\frac{1}{4}(3,2,5,1)$. One can verify that the exceptional divisor of
			$W_4\rightarrow W_3$ has discrepancy one over $W$. This leads to a contradiction.
		\item[Case (i-2)] $zu^2\not\in g''(y,z,u)$. The origin of the chart $U_{z,3}$ is a $cAx/4$ point and weighted blow-up this point with
			weight $\frac{1}{4}(3,2,5,1)$ defines a $w$-morphism $W_4\rightarrow W_3$. $W_4$ has a $\frac{1}{5}(3,2,1)$ point. Let $F$ be the
			exceptional divisor which has discrepancy $\frac{3}{5}$ over $W_4$. Then one can compute that $a(F,W_2)=1$ and
			$a(F,W_1)=\frac{3}{2}$. Lemma \ref{nqf} now implies the claim.
		\item[Case (ii)] The only non-Gorenstein singular point on $W_3$ is a cyclic quotient point of type $\frac{1}{5}(1,4,1)$. Let $F$ be the
			exceptional divisor which has discrepancy $\frac{4}{5}$ over $W_3$. We have $a(F,W_2)=1$ and $a(F,W_1)=\frac{3}{2}$. The statement
			follows from Lemma \ref{nqf}.
	\end{description}
	Now the claim implies that $W_2$ do not have any Gorenstein singularity because the $\Q$-factorization of $W_2$ has zero Gorenstein depth.
	This implies that $zu^7$ do not appear in $g'(y,z,u)$. Hence either $yu^4$ or $u^6$ appear in $g''(y,z,u)$
	and weighted blowing-up the $cD/3$ point on $W_2$ with the weight $\frac{1}{3}(3,2,4,1)$ defines a $w$-morphism $W_3\rightarrow W_2$.\par
	The only non-Gorenstein point on $W_3$ is the origin of the chart $U_{z,3}$ defined by
	 \[ (x^2+y^3+z^2+g'''(z,u)=0)\subset \A^4_{(x,y,z,u)}/\frac{1}{4}(3,2,1,1)\] and the resolution of this point is described in Case (i-2)
	 of the proof of the claim. $W_4$ has a $\frac{1}{5}(3,2,1)$ point. $W_8\rightarrow\cdots\rightarrow W_4$ is the economic resolution.\par
	 Finally the discrepancies of exceptional divisors follows from direct computation.
\end{proof}

\begin{lem}\label{nqf}
	Assume that $V\rightarrow W$ is a $w$-morphism. Let $X$ be a $\Q$-factorization of $W$ and let $Y$ be a $\Q$-factorization of $V$.
	Assume that $exc(V\rightarrow W)$ contains only one non-Gorenstein point $P$ which is of index $r+1$ and either $r=1$, $X$ is smooth and
	$exc(X\rightarrow W)$ is irreducible, or $r>1$ and $X$ contains only one non-Gorenstein point. Let $F$ be an exceptional divisor over $P$
	such that $a(F,V)=1$. If $a(F,W)=1+\frac{1}{r}$, then $P$ is not $\Q$-factorial and $\cen_YF$ is a curve.
\end{lem}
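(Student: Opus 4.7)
The plan is to derive both conclusions by contradiction: assuming $P$ is $\Q$-factorial on $V$, we have $Y\to V$ is an isomorphism over a neighbourhood of $P$, hence $\cen_Y F=\{P\}$ is a point. I shall exhibit a model birational to $Y$ on which the center of $F$ is visibly a curve.

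The first step is to connect $Y$ and $X$. Both are $\Q$-factorial terminal threefolds; $Y\to W$ extracts only $E=exc(V\to W)$ since $Y\to V$ is small, while $X\to W$ is small. Running a $K_Y$-MMP over $W$ produces a sequence of flops followed by a divisorial contraction $\pi_n:Y_n\to X$ which contracts the proper transform $E_{Y_n}$ of $E$. Since $a(E_{Y_n},X)=a(E,W)=1/r_Q$, where $r_Q$ is the Cartier index of $W$ at the image point $Q$ of $E$, $\pi_n$ is by definition a $w$-morphism. In case $r=1$, smoothness of $X$ forces $r_Q=1$, so $\pi_n$ is the blow-up of a smooth curve $C\subset X$ (namely the flopping curve of $X\to W$), and $E_{Y_n}$ is a $\mathbb{P}^1$-bundle over $C$. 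In case $r>1$, $\pi_n$ is a $w$-morphism centered at the unique non-Gorenstein point $P_X\in X$.

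Next I locate $F$ on $Y_n$. From $a(F,Y_n)=a(F,Y)=a(F,V)=1$ (smallness of $Y\to V$) and $a(F,X)=a(F,W)=1+\frac{1}{r}$ (smallness of $X\to W$), the standard discrepancy identity yields $\mmul_{E_{Y_n}}F\cdot a(E_{Y_n},X)=\frac{1}{r}$, so $\mmul_{E_{Y_n}}F=\frac{r_Q}{r}$, a positive integer. In case $r=1$ this multiplicity equals $1$, and a direct discrepancy computation over the smooth blow-up $Y_n=Bl_C X$ shows that any valuation $F$ with $a(F,Y_n)=1$ and $\mmul_{E_{Y_n}}F=1$ must be the exceptional divisor of blowing up a smooth curve $\Gamma\subset E_{Y_n}$; in particular $\cen_{Y_n}F=\Gamma$ is a curve. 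In case $r>1$, the analogous analysis, using the classification of divisorial extractions with discrepancy $1$ at the non-Gorenstein point $P\in Y_n$ (Lemma \ref{wmor} and the divisorial-contraction classification of the appendix), once again forces $\cen_{Y_n}F$ to be a curve contained in $E_{Y_n}$.

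Finally, the birational map $Y\dashrightarrow Y_n$ is a composition of flops, hence small, so the dimension of the center of $F$ is preserved between $Y$ and $Y_n$; thus $\cen_Y F$ is also a curve. This contradicts the assumption $\cen_Y F=\{P\}$, and we simultaneously conclude that $P$ is not $\Q$-factorial on $V$ and that $\cen_Y F$ is a curve. The principal obstacle is the case $r>1$, where pinning down the extraction of $F$ from the non-Gorenstein point $P$ requires a careful invocation of the full classification of divisorial contractions at terminal threefold points of the relevant indices.
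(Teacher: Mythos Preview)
Your argument contains a genuine error that invalidates the proof. You claim that running the $K_Y$-MMP over $W$ produces ``a sequence of flops followed by a divisorial contraction'', and you use this twice: first to conclude $a(F,Y_n)=a(F,Y)=1$, and then at the end to say that the dimension of $\cen F$ is preserved between $Y$ and $Y_n$. But the steps of this MMP are \emph{flips}, not flops. Indeed $K_Y$ is not numerically trivial over $W$: since $V\to W$ is a $w$-morphism with exceptional divisor $E$ of discrepancy $1/r_Q>0$, we have $K_Y\equiv_W (1/r_Q)E_Y$, and the $K_Y$-negative contractions encountered before the final divisorial step are flipping contractions. Consequently the negativity lemma gives the strict inequality $a(F,Y_n)>a(F,Y)=1$ whenever the center of $F$ lies on a flipping curve, so your identity $a(F,Y_n)=1$ fails, and there is no reason for the dimension of the center of $F$ to be preserved along the sequence. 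Both halves of your intended contradiction collapse.

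The paper's proof exploits exactly this increase of discrepancy rather than trying to avoid it. Assuming $P$ is $\Q$-factorial, the point $P$ sits on $Y$ and the first flipping curve must pass through it; negativity then forces $a(F,Y_k)>1$. Next one bounds the indices on $Y_k$: any point of index $\geq r+1$ on $Y_k$ would yield an exceptional divisor $G$ with $a(G,Y)<a(G,Y_k)\leq \tfrac{1}{r+1}$, impossible since the only high-index point on $Y$ is $P$ itself. Hence every singular point of $Y_k$ has index $\leq r$, so $a(F,Y_k)\geq 1+\tfrac{1}{r}=a(F,X)$. But $\cen_{Y_k}F\subset exc(Y_k\to X)$, which forces $a(F,Y_k)<a(F,X)$, a contradiction. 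No classification of divisorial extractions is needed; the index bound and negativity do all the work.
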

\begin{proof}
	We run $K_Y$-MMP over $W$ and we may assume that $X$ is the minimal model. Then $Y\dashrightarrow X$ factorize into
	\[ Y\dashrightarrow Y_1\dashrightarrow \cdots\dashrightarrow Y_k\rightarrow X,\] such that $Y\dashrightarrow Y_k$ is a sequence of flips
	and $Y_k\rightarrow X$ is a divisorial contraction. Note that $Y_k\rightarrow X$ is a divisorial contraction to a curve if $r=1$, and
	a $w$-morphism if $r>1$.\par
	Assume that $P$ is $\Q$-factorial, then $P$ appears on $Y$ and the flipping curve of $Y\dashrightarrow Y_1$ passes through $P$. By the
	negativity lemma, we know that $a(Y_k,F)>a(Y,F)=1$. On the other hand, every singular point on $Y_k$ has index less than $r+1$ since
	otherwise there exists an exceptional divisor $G$ such that $a(G,Y)<a(G,Y_k)\leq\frac{1}{r+1}$, which is impossible. Hence
	$a(F,Y_k)\geq 1+\frac{1}{r}=a(F,W)=a(F,X)$. This leads to a contradiction since $\cen_{Y_k}F$ is contained in $exc(Y_k\rightarrow X)$.
\end{proof}
%\begin{lem}\label{nflip}
%	Let $Y\dashrightarrow Y_1\dashrightarrow\cdots Y_k$ be a sequence of flips. Let $E_1$, ..., $E_n$ be all the exceptional divisors such that
%	$a(Y,E_i)<1$. If $a(Y_k,E_i)\geq1$ for $i\leq m$ and $a(Y_k,E_i)<1$ for $i>m$, then $k\leq m$.
%\end{lem}
%\begin{proof}
%	Let $C'_i$ be the flipped curve of $Y_i\dashrightarrow Y_{i+1}$ and let $F_i$ be the exceptional divisor obtained by blowing-up $C'_i$,
%	then \[ a(Y,F_i)<a(Y_{i+1},F_i)=1\leq a(Y_k,F_i).\] This proves the lemma.
%\end{proof}
\begin{lem}\label{cax}
	Notation as in Lemma \ref{le8}. The $cAx/4$ point on $W_3$ is not $\Q$-factorial. Let $\bar{Z}\rightarrow W_3$ be a $\Q$-factorization
	and let $\bar{Z}\dashrightarrow \bar{Z}'$ be the corresponding flop. Then $\bar{Z}$ has two singular points. One of them is a
	$\frac{1}{4}(1,3,1)$ point and the other one is a $\frac{1}{2}(1,1,1)$ point. We have $\cen_{\bar{Z}}E_7$ is the flopping curve,
	$\cen_{\bar{Z}}E_6$ is the $\frac{1}{2}(1,1,1)$ point and $\cen_{\bar{Z}}E_i$ is the $\frac{1}{4}(1,3,1)$ point for $i=4$,
	$5$ and $8$.\par
	Moreover, let $\tl{Z}$ be a $\Q$-factorization of $W_4$. If $\tl{Z}\neq W_4$, then the corresponding flop $\tl{Z}\dashrightarrow \tl{Z'}$
	is at worst a smooth flop over a $cA_2$ point. Assume that $\bar{Z}$ is the minimal model of $\tl{Z}$ over $W_3$. The birational map
	$\tl{Z}_0=\tl{Z}\dashrightarrow \bar{Z}$ can be factorize into
	\[\vc{\xymatrix{ \tl{Z}_{(0,1,1,1)} \ar[d]_c^{E_7} & \tl{Z}_{(0,1,1,0)} \ar@{-->}[l] \ar[d]^w & & \\
		\tl{Z}_{(0,1,2)} \ar[d]_w^{E_6} & \tl{Z}_{(0,1,1)} & \tl{Z}_{(0,1,0)} \ar@{-->}[l] \ar[d]^w & \\
		\tl{Z}_{(0,2)} \ar[d]_w^{E_5} & & \tl{Z}_{(0,1)} & \tl{Z}_{(0,0)} \ar@{-->}[l] \ar[d]^w \\
		\tl{Z}_1 \ar[d]_w^{E_4} & & & \tl{Z}_0 \\ \bar{Z} & & & }}\]
	such that every dash arrow is an Atiyah flop.
\end{lem}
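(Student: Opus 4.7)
The plan is to split the statement into three pieces: (i) non-$\Q$-factoriality of the $cAx/4$ point $P\in W_3$, (ii) identification of the singularities of $\bar{Z}$ together with the centers $\cen_{\bar{Z}}E_i$, and (iii) the construction of the diagram realizing $\tl{Z}\dashrightarrow\bar{Z}$.

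For (i) I would apply Lemma~\ref{nqf} with $V=W_3$ and $W=W_2$. Since $K_{W_3}$ has Cartier index $4$ at $P$, take $r=3$; one then needs an exceptional divisor $F$ over $P$ with $a(F,W_3)=1$ and $a(F,W_2)=1+\frac{1}{3}=\frac{4}{3}$, and the discrepancy table of Lemma~\ref{le8} shows that $F=E_7$ works. The remaining hypotheses of Lemma~\ref{nqf}, namely that the only non-Gorenstein points of $W_2$ and $W_3$ are the $cD/3$ point and $P$ (and that the $\Q$-factorization of $W_2$ inherits a unique non-Gorenstein point), were already verified in the proof of Lemma~\ref{le8}. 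Hence $P$ is not $\Q$-factorial and $\cen_{\bar{Z}}E_7$ is a curve, necessarily the flopping curve of the small contraction $\bar{Z}\to W_3$.

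For (ii) I note that $GorE(\bar{Z})\leq GorE(Y)=-\infty$, so every singular point of $\bar{Z}$ is cyclic quotient, and since $\bar{Z}\to W_3$ is small the remaining divisors $E_4,E_5,E_6,E_8$ keep their $W_3$-discrepancies $\frac{1}{4},\frac{1}{2},\frac{1}{2},\frac{3}{4}$ over $\bar{Z}$. Matching these discrepancies against the economic resolutions of terminal cyclic quotient singularities forces exactly one $\frac{1}{4}(1,3,1)$ point (accounting for divisors of discrepancy $\frac{1}{4},\frac{2}{4},\frac{3}{4}$) together with one $\frac{1}{2}(1,1,1)$ point (accounting for a single $\frac{1}{2}$ divisor). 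To pin down which $E_i$ lies over which point, I would use that on $W_4$ the divisors $E_5,E_6,E_7,E_8$ all sit over its $\frac{1}{5}(3,2,1)$ point while $E_4$ is extracted at the step $W_4\to W_3$; tracing this through the flop $\bar{Z}\to W_3$ gives $\cen_{\bar{Z}}E_4=\cen_{\bar{Z}}E_5=\cen_{\bar{Z}}E_8$ equal to the $\frac{1}{4}(1,3,1)$ point and $\cen_{\bar{Z}}E_6$ equal to the $\frac{1}{2}(1,1,1)$ point.

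For (iii) I would run the $K_{\tl{Z}}$-MMP over $W_3$ with $\bar{Z}$ as target minimal model; its length is controlled by the difference of depths $dep(\tl{Z})-dep(\bar{Z})$ read off from Lemma~\ref{le8}. The last step $\tl{Z}_1\to\bar{Z}$ must be a $w$-morphism extracting the $\frac{1}{4}$-discrepancy divisor over the $\frac{1}{4}(1,3,1)$ point, which is $E_4$. Each intermediate flip is analysed via Lemma~\ref{lflip}, Lemma~\ref{pago} and Lemma~\ref{3flip}: at each stage the flipping curve passes through a single cyclic quotient point, so the associated flop is an Atiyah flop by Lemma~\ref{ati2} applied to the proper transform of the $w$-morphism divisor along the flopped curve, and the subsequent $w$-morphism extracts the next divisor in the chain $E_5,E_6,E_7$ as one climbs the diagram. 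The labels $E_i$ on the downward arrows are then simply a record of which divisor is being extracted at each $w$-morphism, consistent with the discrepancy table. The main obstacle is exactly step (iii): one must verify inductively, at every intermediate level, that $GorE(\cdot)=-\infty$ and that the relevant surface is smooth along the flopped curve so that Lemma~\ref{ati2} applies. This bookkeeping of depths, discrepancies and singular points at each $\tl{Z}_{(0,\ldots)}$ is the technical heart of the proof, but it is guided uniformly by the table of Lemma~\ref{le8}.
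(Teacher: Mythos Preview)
Your outline follows the paper's argument closely, but there is one genuine logical slip in part (ii) and one missing ingredient in part (iii).

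In (ii) you write that $GorE(\bar{Z})=-\infty$ forces every singular point of $\bar{Z}$ to be a cyclic quotient. This implication is false: by Lemma~\ref{acar}(1) any $cA/r$ point of axial weight one has $GorE=-\infty$ without being a cyclic quotient singularity. The paper argues differently. From the table there is an index~$4$ point on $\bar{Z}$ (because $a(E_4,\bar{Z})=\frac{1}{4}$), and the only discrepancy-one divisor over $W_3$ is $E_7$, whose center on $\bar{Z}$ is the flopping curve and not this point. A non-cyclic-quotient terminal index~$4$ point always carries at least one exceptional divisor of discrepancy $\leq 1$ beyond the three fractional ones, so its absence here forces the $\frac{1}{4}(1,3,1)$ type; the leftover $\frac{1}{2}$-divisor then accounts for the $\frac{1}{2}(1,1,1)$ point. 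Your discrepancy-matching step is fine once you replace the $GorE$ justification by this one.

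In (iii) your plan is correct, but one concrete check is indispensable and not covered by the lemmas you cite. After two rounds, $\tl{Z}_{(0,1,1)}$ carries \emph{two} $\frac{1}{2}(1,1,1)$ points, and Lemma~\ref{pago} only applies to a flip through a single such point; Lemma~\ref{a2flop} shows a curve through both would be a flop, not a flip. The paper separates them by computing $a(E_8,\tl{Z}_{(0,1,2)})=a(E_8,\tl{Z}_{(0,1,1)})=\frac{1}{2}$, which pins $\cen_{\tl{Z}_{(0,1,2)}}E_8$ to a $\frac{1}{2}(1,1,1)$ point lying off the flipped curve, so the flipping curve of $\tl{Z}_{(0,1,1)}\dashrightarrow\tl{Z}_{(0,1,2)}$ meets only one singular point and Lemma~\ref{pago} applies. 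Similarly, to place the $\frac{1}{3}(1,2,1)$ point on the flipping curve of $\tl{Z}_{(0,1)}\dashrightarrow\tl{Z}_{(0,2)}$ one tracks $\cen E_7$. Finally, for the Atiyah-flop conclusion the paper invokes Lemma~\ref{flopness} (smoothness of the flop) together with Lemma~\ref{ati}, rather than Lemma~\ref{ati2} directly; Lemma~\ref{ati2} would require you to first verify smoothness of the relevant exceptional surface along the flopped curve, which is exactly what Lemma~\ref{ati} packages.
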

\begin{proof}
	The $cAx/4$ point is not $\Q$-factorial and $\cen_{\bar{Z}}E_7$ is the flopping curve by Lemma \ref{nqf}. By the table in Lemma \ref{le8}
	we know that $\bar{Z}$ contains a singular point of index $4$, but there is no discrepancy one exceptional divisor over this point.
	It follows that the index $4$ point should be a cyclic quotient point. Since there are two exceptional divisor of discrepancy $\frac{1}{2}$
	over $\bar{Z}$, $\bar{Z}$ contains another $\frac{1}{2}(1,1,1)$ point.\par
	We are going to factorize the flop $\bar{Z}\dashrightarrow \bar{Z'}$. By direct computation, one can see that the Gorenstein singularities
	on $W_4$ are at worst $cA_2$ singularities. Let $\tl{Z}\rightarrow W_4$ be a $\Q$-factorization. If $W_4$ is not $\Q$-factorial, let
	$\tl{Z}\dashrightarrow \tl{Z'}$ be the corresponding flop. Otherwise let $\tl{Z'}=\tl{Z}$. We know that $\tl{Z}\dashrightarrow \tl{Z'}$ is
	at worst a smooth flop over a $cA_2$ point.\par
	Now $\tl{Z}_{(0)}=\tl{Z}$ has only a $\frac{1}{5}(3,2,1)$ singularity and $\tl{Z}_{(k)}$ has a $\frac{1}{3}(1,2,1)$ and $\frac{1}{2}(1,1,1)$
	point. Hence $dep(\tl{Z})=dep(\tl{Z_k})+1$, so $k=1$. We know that $exc(\tl{Z}_{(0,0)}\rightarrow \tl{Z}_{(0)})=E_5$ and
	$a(E_5,\bar{Z})=\frac{1}{2}$. Hence $\tl{Z}_{(0,0)}\dashrightarrow \tl{Z}_{(0,1)}$ is a flop and $\tl{Z}_{(0,2)}\rightarrow \tl{Z}_{(1)}$
	is a $w$-morphism by Lemma \ref{lflip}.\par
	Since $\cen_{\tl{Z}_{(0,1)}}E_7$ is the $\frac{1}{3}(1,2,1)$ point, this point lies on the flipping curve of
	$\tl{Z}_{(0,1)}\dashrightarrow \tl{Z}_{(0,2)}$. We have that $\tl{Z}_{(0,1,0)}\dashrightarrow \tl{Z}_{(0,1,1)}$ is a flop 
	and $\tl{Z}_{(0,1,2)}\rightarrow \tl{Z}_{(0,2)}$ is a $w$-morphism by Lemma \ref{lflip}. $\tl{Z}_{(0,1,1)}$ has only two
	$\frac{1}{2}(1,1,1)$ point. One can check that $a(E_8,\tl{Z}_{(0,1,2)})=a(E_8,\tl{Z}_{(0,1,1)})=\frac{1}{2}$. This implies that 
	$\cen_{\tl{Z}_{(0,1,2)}}E_8$ is a $\frac{1}{2}(1,1,1)$ point which is not contained in the flipped curve.  Thus the flipping curve of
	$\tl{Z}_{(0,1,1)}\dashrightarrow \tl{Z}_{(0,1,2)}$ contains only a $\frac{1}{2}(1,1,1)$ point and the factorization of this flip
	is given by Lemma \ref{pago}.\par
	Finally every flop appears in the factorization is an Atiyah flop by Lemma \ref{flopness} and Lemma \ref{ati}
\end{proof}
\begin{lem}\label{cd3}
	Notation as in Lemma \ref{le8} and Lemma \ref{cax}. Let $Z$ be a $\Q$-factorization of $W_2$ and assume that $Z$ is a minimal model of 
	$\bar{Z}$ over $W_2$. Then $Z$ has a $cA/3$ singular point defined by \[ (xy+z^3+u^2=0)\subset\A^4_{(x,y,z,u)}/\frac{1}{3}(1,2,1,0).\]
	There is a unique $w$-morphism $\bar{Z}_1\rightarrow Z$ such that $\bar{Z}_1$ has a $\frac{1}{2}(1,1,1)$ point and a $\frac{1}{3}(2,1,1)$
	point. The $\frac{1}{2}(1,1,1)$ point is the center of $E_6$ and the $\frac{1}{3}(2,1,1)$ point is the center of $E_4$ and $E_5$.
	The center of $E_8$ on $Z$ is a curve.\par
	We have the following factorization
	\[\vc{\xymatrix{ \bar{Z}_{(0,1,1,1)} \ar[d]_c^{E_8} & \bar{Z}_{(0,1,1,0)} \ar@{-->}[l] \ar[d]^w & & \\
		\bar{Z}_{(0,1,2)} \ar[d]_w^{E_5} & \bar{Z}_{(0,1,1)} & \bar{Z}_{(0,1,0)} \ar@{-->}[l]\ar[d]^w & \\
		\bar{Z}_{(0,2)} \ar[d]_w^{E_4} & & \bar{Z}_{(0,1)} & \bar{Z}_{(0,0)} \ar@{-->}[l]\ar[d]^w  \\
		\bar{Z}_1\ar[d]_w^{E_3} & & & \bar{Z}_0 \\ Z & & & }}\]
	such that every dash arrow is an Atiyah flop. 
\end{lem}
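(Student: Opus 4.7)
The plan is to mirror the proof of Lemma \ref{cax}. First I will identify the singular locus of $Z$. Since the $cD/3$ point on $W_2$ is not $\Q$-factorial (shown in the proof of Lemma \ref{le8}), the small modification $Z\to W_2$ extracts a flopping curve over this point. Reading the discrepancy table in Lemma \ref{le8}, the only non-Gorenstein exceptional divisors over $W_2$ supported over the $cD/3$ point are $E_3$ (discrepancy $\frac{1}{3}$) and $E_4$ (discrepancy $\frac{2}{3}$). Since $GorE(Z)=-\infty$, $Z$ has no Gorenstein singularity, so its only singular point has index $3$. Starting from the explicit local equation of the $cD/3$ point on $W_2$ produced by the weighted blow-up in Lemma \ref{le8}, a direct computation of the $\Q$-factorization extracting the relevant flopping curve yields exactly the stated $cA/3$ equation.

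Next I will construct $\bar{Z}_1\to Z$ and analyze $\bar{Z}_1$. The unique $w$-morphism over the $cA/3$ point extracts $E_3$ (uniqueness follows from the fact that there is only one discrepancy-$\frac{1}{3}$ divisor over $Z$, together with the classification of divisorial contractions to $cA/r$ points). Performing the explicit weighted blow-up of $xy+z^3+u^2$ with weight $\frac{1}{3}(1,2,1,2)$ shows $\bar{Z}_1$ contains exactly a $\frac{1}{2}(1,1,1)$ point and a $\frac{1}{3}(2,1,1)$ point. The centers of the remaining $E_i$ are then read off from the table: $a(E_6,\bar{Z}_1)=\frac{1}{2}$ forces $\cen_{\bar{Z}_1}E_6$ to be the $\frac{1}{2}(1,1,1)$ point, while $a(E_4,\bar{Z}_1)$ and $a(E_5,\bar{Z}_1)$ identify their common center as the $\frac{1}{3}(2,1,1)$ point. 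For the center of $E_8$ on $Z$: it cannot be the $cA/3$ point (otherwise the unique $w$-morphism over that point would place $\cen_{\bar{Z}_1}E_8$ at a singular point, contradicting the required structure), nor a smooth point (by the discrepancy bookkeeping), so it must be a curve.

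For the factorization diagram I will run $K_{\bar{Z}}$-MMP over $W_2$, which yields a single flip $\bar{Z}=\bar{Z}_0\dashrightarrow\bar{Z}_1$ followed by the $w$-morphism $\bar{Z}_1\to Z$ contracting $E_3$. Applying Theorem \ref{chf} gives a $w$-morphism $\bar{Z}_{(0,0)}\to\bar{Z}$; since $dep(\bar{Z}_{(0,0)})=dep(\bar{Z}_1)+1$, Lemma \ref{lflip} forces $\bar{Z}_{(0,0)}\dashrightarrow\bar{Z}_{(0,1)}$ to be a flop and $\bar{Z}_{(0,2)}\to\bar{Z}_1$ to be the $w$-morphism extracting $E_4$. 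The same argument applied to the flip $\bar{Z}_{(0,1)}\dashrightarrow\bar{Z}_{(0,2)}$ (whose flipping curve contains the center of $E_5$) produces $\bar{Z}_{(0,1,2)}\to\bar{Z}_{(0,2)}$ extracting $E_5$. The innermost flip $\bar{Z}_{(0,1,1)}\dashrightarrow\bar{Z}_{(0,1,2)}$ sits over a variety with only $\frac{1}{2}(1,1,1)$ singularities, so Lemma \ref{pago} applies and yields the blowing-up-a-smooth-curve $\bar{Z}_{(0,1,1,1)}\to\bar{Z}_{(0,1,2)}$ contracting $E_8$. Lemma \ref{flopness} ensures every flop in this tower is smooth, and Lemma \ref{ati} then upgrades each to an Atiyah flop given the $cA/r$-type singularities of the intermediate base varieties.

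The main obstacle will be the bookkeeping of centers: at each level one must verify which $E_i$ is contracted at which step, confirm that the resulting intermediate variety has the expected singularities (so that Lemma \ref{lflip} can be iterated), and check that the depth drops by exactly one at each stage. These are mechanical consequences of the discrepancy table in Lemma \ref{le8} together with the uniqueness results used in Lemma \ref{cax}, but tracking all of them coherently through the five intermediate levels is the delicate part of the argument.
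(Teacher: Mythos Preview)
Your overall plan (mirror Lemma \ref{cax}) is the right one, but several steps in your execution have genuine gaps.

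\textbf{Identifying the $cA/3$ equation.} Your enumeration of exceptional divisors over $W_2$ is incomplete: from the table in Lemma \ref{le8} the divisors with non-integral discrepancy over $W_2$ are $E_3$ ($\tfrac{1}{3}$), $E_4$ and $E_6$ (both $\tfrac{2}{3}$), and $E_7$ ($\tfrac{4}{3}$), and both $E_5$ and $E_8$ have discrepancy $1$. You cannot then conclude the singularity type of $Z$ by ``direct computation of the $\Q$-factorization'' --- a $\Q$-factorization is a small projective modification, not an explicit weighted blow-up, and there is no direct formula for the resulting local equation. The paper instead argues as follows: the general elephant of any singular point of $Z$ can have at most five exceptional curves, which rules out $cD/3$; then, knowing the point is $cA/3$, the equation is pinned down uniquely by the fact that over it there is exactly one divisor of discrepancy $\tfrac{1}{3}$, two of discrepancy $\tfrac{2}{3}$, and one of discrepancy $1$. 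Note that this last count already forces one of $E_5,E_8$ to have \emph{curve} center on $Z$.

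\textbf{The center of $E_8$.} Your argument here is circular: you rule out the $cA/3$ point as center by saying it would ``contradict the required structure'', but that structure is what you are trying to establish. The paper's route is the reverse of yours: the curve--center of $E_8$ is established first (via the discrepancy count above and the identification of $E_5$ as the unique discrepancy-$1$ divisor over the $cA/3$ point, which follows once the feasible resolution of that point is written down), and only \emph{then} is it used to decide which of the two singular points of $\bar{Z}$ the flipping curve of $\bar{Z}_0\dashrightarrow\bar{Z}_1$ passes through --- namely the $\tfrac{1}{4}(1,3,1)$ point. This in turn yields $a(\bar{Z}_1,E_6)=a(\bar{Z}_0,E_6)=\tfrac{1}{2}$, forcing $\cen_{\bar{Z}_1}E_6$ to be the $\tfrac{1}{2}(1,1,1)$ point. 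Your proposal does not determine which singular point of $\bar{Z}$ the flipping curve meets, and without that the identification of centers on $\bar{Z}_1$ cannot be completed.

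\textbf{Minor issues.} The weight $\tfrac{1}{3}(1,2,1,2)$ is not admissible for the quotient $\tfrac{1}{3}(1,2,1,0)$ (the last entry must be $\equiv 0\pmod 3$); the correct $w$-morphism weight is $\tfrac{1}{3}(1,2,1,3)$. Also, your invocation of Lemma \ref{lflip} should read $dep(\bar{Z}_0)=dep(\bar{Z}_1)+1$, not $dep(\bar{Z}_{(0,0)})=dep(\bar{Z}_1)+1$.
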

\begin{proof}
	Notice that a general elephant of a singular point on $Z$ have at most $5$ components, hence $Z$ do not have $cD/3$ singularity.
	Thus $Z$ has only a $cA/3$ singular point. Because there are exactly one exceptional divisor of discrepancy $\frac{1}{3}$, two
	exceptional divisor of discrepancy $\frac{2}{3}$ and one exceptional divisor of discrepancy $1$ over this singular point, we may assume that
	this point is of the following form  \[ (xy+z^3+u^2=0)\subset\A^4_{(x,y,z,u)}/\frac{1}{3}(1,2,1,0)\] by \cite[Proposition 3.4]{me}.
	We know that $\bar{Z}_k\rightarrow Z$ is a $w$-morphism and there is only one $w$-morphism over this $cA/3$ point.
	One can verify that $\bar{Z}_k$ has a $\frac{1}{2}(1,1,1)$ point and a $\frac{1}{3}(2,1,1)$ point. Thus $dep(\bar{Z}_k)=3=dep(\bar{Z})-1$
	and so $k=1$. Notice that $\bar{Z}$ contains two singular points, one is a $\frac{1}{4}(1,3,1)$ point and the other one is a
	$\frac{1}{2}(1,1,1)$ point. The flipping curve of $\bar{Z}_0\dashrightarrow \bar{Z}_1$ passes through only one of these two points
	since the flipped curve on $\bar{Z}_1$ (which is the proper transform of the flopping curve of $Z\dashrightarrow Z'$ on $\bar{Z}_1$)
	passes through only one singular point. Lemma \ref{le8} implies that $\cen_ZE_8$ is a curve, hence the flipping curve should passes
	through the $\frac{1}{4}(1,3,1)$ point and $a(E_6,\bar{Z}_1)=a(E_6,\bar{Z}_0)=\frac{1}{2}$. Hence the $\frac{1}{2}(1,1,1)$ point on
	$\tl{Z}_1$ is the center of $E_6$. Now the factorization of $\bar{Z}_0\dashrightarrow \bar{Z}_1$ can be contracted using the same method
	in the proof of Lemma \ref{cax}.
\end{proof}
\begin{lem}\label{ce2}
	Notation as in Lemma \ref{le8} and Lemma \ref{cd3}. We have that $Y$ has a $cA/2$ singularity with depth equals to $2$ or $3$ and $Z_{(1)}$
	has also a $cA/2$ singularity if $Z_{(1)}\rightarrow Y$ is a $w$-morphism, in the case that $dep(Y)=3$. We have $\cen_{Y}E_6$ is a curve on
	$Y$. $E_2$, $E_3$ (resp. $E_2$, $E_3$ and $E_4$) appears on the feasible resolution of $Y$ when $dep(Y)=2$ (resp. $dep(Y)=3$), Center of
	$E_5$ on the feasible resolution is a curve contained in $E_3$ and Center of $E_4$ is a curve contained in $E_2$ and not contained in $E_3$
	if $dep(Y)=2$.\par
	Moreover, we have the following factorization
	\[ Y\leftarrow Z_{(1)}\leftarrow Z_{(0,k_{(0)})}\dashleftarrow\cdots\dashleftarrow Z_{(0,1)} \dashleftarrow Z_{(0,0)}
		\rightarrow Z_{(0)}=Z\dashrightarrow Z'\] such that
	$Z_{(0,k_{(0)})}\rightarrow Z_{(1)}\rightarrow Y$ and $Z_{(0,0)}\rightarrow Z_{(0)}$ are $w$-morphisms, $Z_{(0,0)}\dashrightarrow Z_{(0,1)}$
	is an Atiyah flop, $k_{(0)}=3$ if $dep(Y)=3$ and $k_{(0)}=4$ if $dep(Y)=2$, $Z_{(0,1)}\dashrightarrow Z_{(0,2)}$ is given by Lemma
	\ref{3flip} (1) and other flips have the factorization in Lemma \ref{pago}.
\end{lem}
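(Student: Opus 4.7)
The plan is to analyze $Y$ as the $\Q$-factorization of $W_1$ and trace the MMP connecting $Y$ to the varieties $Z, \bar{Z}, \tl{Z}$ already understood in Lemmas \ref{cd3}, \ref{cax}, \ref{le8}. First I would determine the singular locus of $Y$. Since $GorE(Y/W)=-\infty$, every singular point of $Y$ is non-Gorenstein, and from the discrepancy table in Lemma \ref{le8} the only exceptional divisors over $W_1$ with discrepancy less than $1$ are $E_2$ (discrepancy $\frac{1}{2}$) and $E_3$ (discrepancy $1$). Since $\rho(Y/W_1)=1$ and $Y\to W_1$ extracts only one Gorenstein exceptional divisor, the analysis of which divisors are $\Q$-Cartier forces $Y$ to have a single non-Gorenstein point, necessarily of index $2$. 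Using the general elephant classification (via Lemma \ref{ge}), this point cannot be a cyclic quotient or $cAx/2$ point since we need further $w$-morphisms $E_3$ and possibly $E_4$ to sit over it; the only remaining option compatible with the discrepancies and with $dep(Y)\in\{2,3\}$ is a $cA/2$ singularity.

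Next I would compute $dep(Y)$ and determine the centers of $E_i$. The sequence $W_8\to\cdots\to W_2$ has length $6$ and resolves the singularities of $W_2$; since $Z$ is a $\Q$-factorization of $W_2$ obtained by a $K_{\tl Z}$-MMP over $W_2$ (via Lemma \ref{cd3}), comparing depths through Proposition \ref{chfp} yields $dep(Z)\in\{5,6\}$. Then $Z\dashrightarrow\bar{Z}\to W_3$ together with the factorization from Lemma \ref{cd3} forces $dep(Y)\in\{2,3\}$. The case $dep(Y)=3$ occurs precisely when $Z_{(1)}\to Y$ is a $w$-morphism extracting $E_4$, and then by the classification of $w$-morphisms on $cA/2$ points (cf.\ Lemma \ref{wmor}) $Z_{(1)}$ must again be $cA/2$. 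The identification of $\cen_Y E_6$ as a curve, and of the centers of $E_4,E_5$, follows from the discrepancy data in Lemma \ref{le8}: since $a(E_6,Y)>1$, $\cen_Y E_6$ cannot be a point of $Y$, and the inclusions $\cen\subset E_3$ or $\cen\subset E_2$ are read off directly from which intermediate $W_i$ contain the corresponding centers.

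For the factorization, I would run $K_Z$-MMP over $Y$ starting from $Z$ and use Lemma \ref{cd3} to identify the intermediate models. By Lemma \ref{lflip} applied at each step, $Z_{(0,0)}\dashrightarrow Z_{(0,1)}$ must be a flop (rather than a flip) because the flopping curve contains only the $\frac{1}{3}(2,1,1)$ point appearing in Lemma \ref{cd3}, and depth comparison forces $Z_{(0,0)}\to Z_{(0)}$ to be a $w$-morphism. Each subsequent flip $Z_{(0,i)}\dashrightarrow Z_{(0,i+1)}$ has flipping curve passing through a single singular point of depth $1$, so Lemma \ref{pago} provides its factorization and guarantees the flop in the middle is Atiyah. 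Counting steps: when $dep(Y)=3$, the chain goes through $Z_{(0,1)},Z_{(0,2)},Z_{(0,3)}$ with $Z_{(0,3)}\to Z_{(1)}$ a $w$-morphism extracting $E_4$, giving $k_{(0)}=3$; when $dep(Y)=2$, one additional flip is needed before reaching $Z_{(1)}\to Y$ directly, giving $k_{(0)}=4$.

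The main obstacle I expect is the separation of the two cases $dep(Y)=2$ and $dep(Y)=3$, since it requires a delicate depth-count along $\tl{Z}\dashrightarrow\bar{Z}\dashrightarrow Z$ and a verification, via the explicit form of the $cA/2$ equation together with the constraints on which exceptional divisors the $w$-morphisms $Z_{(0,i)}\to Z_{(0,i+1)}$ can extract, that the two cases are really dichotomous and correspond to the geometric distinction about $\cen E_4$ stated in the lemma. Once the singularities of the intermediate $Z_{(0,i)}$ are pinned down, Lemmas \ref{pago}, \ref{3flip}, \ref{ati}, \ref{flopness} combine mechanically to yield the asserted factorization with every dashed arrow an Atiyah flop.
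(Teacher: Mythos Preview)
Your outline has the right general shape, but several of the key steps rest on incorrect claims, and the places you dismiss as ``mechanical'' are exactly where the paper does the real work.

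\textbf{The center of $E_6$.} You write that $a(E_6,Y)>1$ and therefore $\cen_YE_6$ cannot be a point. But from the discrepancy table in Lemma~\ref{le8} one has $a(E_6,W_1)=1$, and since $Y\to W_1$ is small this gives $a(E_6,Y)=1$. So your argument collapses: a priori $E_6$ could perfectly well be centered at the unique singular point of $Y$. The paper instead invokes Lemma~\ref{nqf} with $V=W_1$, $W=W_0$, $r=1$ and $F=E_6$, using $a(E_6,W_1)=1$ and $a(E_6,W)=2=1+\tfrac{1}{r}$; this is a genuine negativity--lemma argument, not a discrepancy count.

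\textbf{Why $Z_{(0,0)}\dashrightarrow Z_{(0,1)}$ is a flop.} You appeal to Lemma~\ref{lflip}, but that lemma requires $dep(Z_{(0)})=dep(Z_{(1)})+1$. Here $dep(Z)=4$ (from Lemma~\ref{cd3}) while $dep(Z_{(1)})=dep(Y)-1\in\{1,2\}$, so the drop is $2$ or $3$ and Lemma~\ref{lflip} does not apply. You also assert that the flopping curve passes through the $\tfrac13(2,1,1)$ point; the paper shows the \emph{opposite}, using Mori's local classification \cite[p.~243]{mo} to exclude the index-$3$ point, and then a contradiction argument (which crucially uses the previously established fact that $\cen_YE_6$ is a curve) to exclude the $\tfrac12(1,1,1)$ point as well. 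Thus the curve misses both singular points, and this is why the map is a flop.

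\textbf{The structure of the subsequent flips.} You say each $Z_{(0,i)}\dashrightarrow Z_{(0,i+1)}$ is a depth-$1$ flip handled by Lemma~\ref{pago}. But the lemma statement itself says $Z_{(0,1)}\dashrightarrow Z_{(0,2)}$ is governed by Lemma~\ref{3flip}(1): this flip is over the $\tfrac13(1,2,1)$ point, which has depth $2$. The paper further needs a non-trivial general--elephant argument to show that the flipping curve of $Z_{(0,1)}\dashrightarrow Z_{(0,2)}$ does not connect the two singular points of $Z_{(0,1)}$, and in the case $dep(Y)=2$ an additional intersection-number argument (tracking $K.\Gamma$ and $E_3.L$) to pin down $k_{(0)}=4$ and to rule out $dep(Z_{(0,2)})=dep(Z_{(0,1)})-2$.

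\textbf{Atiyah-ness of the first flop.} This is not a consequence of Lemmas~\ref{ati}/\ref{flopness} applied blindly: the hypotheses of Lemma~\ref{ati} require the factorization $(\ast)$ with all inner flops already Atiyah, which is not yet known. The paper's proof tracks the flipping curves $\Xi_i$ back to $Z_{(0,0)}$, checks (separately in the two depth cases, and in the $dep(Y)=2$ case via a delicate argument that the proper transform of $\Xi_2$ is \emph{not} the flopped curve) that the exceptional divisor $F$ stays generically smooth along them, and only then applies Lemma~\ref{ati1} and Lemma~\ref{ati2}.
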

\begin{proof}
	Obviously $Y$ can not have $cE/2$ singularities because a general elephant of a singular point on $Y$ has at most $6$ components.
	Notice that there is only one exceptional divisor of discrepancy $\frac{1}{2}$ over the singular point of $Y$. Furthermore, let
	$Z_k\rightarrow Y$ be the $w$-morphism, then every non-Gorenstein point on $Z_k$ should have index $2$ since $Z$ has only a $cA/3$
	singular point. By the classification in \cite{h1,h2} and by a direct computation one can check that $Y$ can not have $cAx/2$ or $cD/2$
	singularities. Thus the singular point of $Y$ is a $cA/2$ point and the general elephant of this point has at worst $A_5$ singularities
	(\cite[(6.4)]{re2}. Moreover, there exists one discrepancy one exceptional divisor over this point. This implies that the singular point
	on $Y$ is of the form \[ (xy+z^2+u^n=0)\subset\A^4_{(x,y,z,u)}/\frac{1}{2}(1,1,1,0),\] with $n=2$, $3$. One can see that
	$dep(Y)=n=2$ or $3$.\par
	By Lemma \ref{nqf}, we know that $\cen_YE_6$ is a curve. $E_2$ and $E_3$ appear in the feasible resolution of $Y$ because they are
	only two exceptional divisors with discrepancy less than or equal to one. The $w$-morphism $Z_k\rightarrow Y$ extracts $E_2$. Since
	$\cen_ZE_8$ is a curve and $a(E_8,Z_k)=1$, $\cen_{Z_k}E_8$ is a curve. We have $a(E_4,Z_k)=1$. If $dep(Y)=2$ then $Z_k$ has only a
	$\frac{1}{2}(1,1,1)$ singular point, so $\cen_{Z_k}E_4$ is a curve. If $dep(Y)=3$, then $Z_k$ has a non-cyclic quotient $cA/2$ point and the
	center of $E_4$ should be this point. Finally since $a(E_5,Y)=2$, $E_5$ should be a curve contained in $E_3$.\par
	Let $C_{Z_{(0)}}$ be the flipping curve of $Z_{(0)}\dashrightarrow Z_{(1)}$ and let $C_{Z_{(0,0)}}$ be the proper transform of $C_{Z_{(0)}}$
	on $Z_{(0,0)}$. Note that $Z_{(0,0)}$ has two singular points. One of them is a $\frac{1}{3}(1,2,1)$ point and the other one is a
	$\frac{1}{2}(1,1,1)$ point. The explicit equation of the singular point on $Z_{(0)}$ is given by Lemma \ref{cd3}. According to Mori's local
	classification \cite[Page 243]{mo}, we know that the $C_{Z_{(0,0)}}$ won't pass through the $\frac{1}{3}(1,2,1)$ point. 
	We claim that $C_{Z_{(0,0)}}$ do not pass through any singular point and so $Z_{(0,0)}\dashrightarrow Z_{(0,1)}$ is a smooth flop.\par
	Indeed, assume that $C_{Z_{(0,0)}}$ passes through the $\frac{1}{2}(1,1,1)$ point. If $Z_{(0,0)}\dashrightarrow Z_{(0,1)}$ is a flop,
	then the non-$\Q$-factorial point of $U_{(0,0)}$ has exactly one exceptional divisor of discrepancy $\frac{1}{2}$ and one exceptional
	divisor of discrepancy $1$. This implies that this non-$\Q$-factorial point in defined by
	\[(xy+z^2+u^2=0)\subset\A^4_{(x,y,z,u)}/\frac{1}{2}(1,1,1,0).\] However this singularity is $\Q$-factorial by \cite[Proposition 2.2.7]{ko2},
	which leads to a contradiction. Hence $Z_{(0,0)}\dashrightarrow Z_{(0,1)}$ is a flip. The flipped
	curve is the center of $E_6$ and is contained in $exc(Z_{(0,k_{(0)})}\rightarrow Z_{(1)})=E_3$. In this case we have
	$a(E_6,Z_{(0,k_{(0)})})=a(E_6,Z_{(0,1)})=1$, hence this curve appears on $Z_{(0,k_{(0)})}$ and is contracted by
	$Z_{(0,k_{(0)})}\rightarrow Z_{(1)}$. This is impossible since $\cen_YE_6$ is a curve.\par
	So we know that $Z_{(0,0)}\dashrightarrow Z_{(0,1)}$ is a smooth flop. We need to say that the flipping curve of
	$Z_{(0,1)}\dashrightarrow Z_{(0,2)}$ do not connect the two singular points on $Z_{(0,1)}$. If it is true, then the flip is of the type
	semistable $IA+IA$ \cite[Theorem 2.2]{km2}. Let $H_{Z_{(0)}}$ be a general elephant of the flip $Z_{(0)}\dashrightarrow Z_{(1)}$. Then
	$H_{Z_{(0,1)}}$ contains the flipping curve of $Z_{(0,1)}\dashrightarrow Z_{(0,2)}$. On the other hand, the flipping curve intersects the
	flopped curve of $Z_{(0,0)}\dashrightarrow Z_{(0,1)}$ non-trivially, hence $H_{Z_{(0,1)}}$ intersects the flopped curve non-trivially. Since
	$H_{Z_{(0,1)}}$ intersects trivially to the flopped curve, it contains the flopped curve. Thus $H_{Z_{(0,0)}}$ contains $C_{Z_{(0,0)}}$
	and $H_{Z_{(0)}}$ contains $C_{Z_{(0)}}$. However it is impossible by the classification \cite[Theorem 2.2]{km2} and the fact that there are
	only one $cA/3$ singular point on $Z_{(0)}$.\par
	Hence the flipping curve of $Z_{(0,1)}\dashrightarrow Z_{(0,2)}$ passes through only one singular point. Assume first that $dep(Y)=3$.
	In this case $\cen_{Z_{(0,k_{(0)})}}E_4$ is a $\frac{1}{2}(1,1,1)$ point. It follows that $k_{(0)}=3$ and
	$Z_{(0,1)}\dashrightarrow Z_{(1)}$ can be factorized into two flips followed by a $w$-morphism 
	\[Z_{(0,1)}\dashrightarrow Z_{(0,2)}\dashrightarrow Z_{(0,3)}\rightarrow Z_{(1)}.\]
 	The two flipped curves correspond to $E_5$ and $E_6$. Since $\cen_{Z_{(0,3)}}E_5$ is a curve contained in $E_3$ and
 	$Z_{(0,3)}\rightarrow Z_{(1)}$ contracts $E_3$ to a point, the flipped curve of $Z_{(0,2)}\dashrightarrow Z_{(0,3)}$ corresponds to $E_6$.
 	Thus $Z_{(0,1)}\dashrightarrow Z_{(0,2)}$ is a flip around the $\frac{1}{3}(1,2,1)$ point and its factorization is given by
 	Lemma \ref{3flip} (1) since the flipped curve on $Z_{(0,2)}$ contains $\cen_{Z_{(0,2)}}E_4$, which is a $\frac{1}{2}(1,1,1)$ point.
 	Now $Z_{(0,2)}\dashrightarrow Z_{(0,3)}$ is a flip around a $\frac{1}{2}(1,1,1)$ point, and its factorization is given by Lemma \ref{pago}.\par
 	Now assume that $dep(Y)=2$. One has that every possible flip $Z_{(0,i)}\dashrightarrow Z_{(0,i+1)}$ is a flip around only one singular point.
 	Let $Z_{(0,j-1)}\dashrightarrow Z_{(0,j)}$ be the flip around a $\frac{1}{2}(1,1,1)$ point such that the flipped curve corresponds to $E_6$.
 	We want to say that $j=k_{(0)}$. Indeed, let $L_i=\cen_{Z_{(0,i)}}E_6$, then we have $K_{Z_{(0,j)}}.L_j=1$ by Lemma \ref{pago}. If
 	$j\neq k_{(0)}$ then $K_{Z_{(0,k_{(0)})}}.L_{k_{(0)}}<K_{Z_{(0,j)}}.L_j=1$, hence $K_{Z_{(0,k_{(0)})}}.L_{k_{(0)}}\leq0$ since
 	$Z_{(0,k_{(0)})}$ is smooth. However it is impossible because $L_{k_{(0)}}$ intersects $exc(Z_{(0,k_{(0)}}\rightarrow Y)$ non-trivially
 	and the image of $L_{k_{(0)}}$ on $Y$ is $\cen_YE_6$, which is a $K_Y$-trivial curve. Hence the last flip
 	$Z_{(0,k_{(0)}-1)}\dashrightarrow Z_{(0,k_{(0)})}$ is a flip around a $\frac{1}{2}(1,1,1)$ point.\par
	The flip $Z_{(0,1)}\dashrightarrow Z_{(0,2)}$ is a flip around a $\frac{1}{3}(1,2,1)$ point. We claim that $dep(Z_{(0,2)})=dep(Z_{(0,1)})-1$. 
	Hence $k_{(0)}=4$, the factorization $Z_{(0,1)}\dashrightarrow Z_{(0,2)}$ is given by Lemma \ref{3flip} (1) and
	$Z_{(0,2)}\dashrightarrow Z_{(0,3)}$ is also a flip around a $\frac{1}{2}(1,1,1)$ point.\par
	Assume that $dep(Z_{(0,2)})=dep(Z_{(0,1)})-2$, then $k_{(0)}=3$. Let $\Gamma_i=\cen_{Z_{(0,i)}}E_5$. Then $\Gamma_2$ is a flipped curve
	so $K_{Z_{(0,2)}}.\Gamma_2>0$. We have seen before that $K_{Z_{(0,3)}}.L_3=1$. One has $\phi\st K_Y=K_{Z_{(0,3)}}-\frac{1}{2}E_2-E_3$
	and $\phi\st K_Y.L_3=0$ where $\phi$ denotes the morphism $Z_{(0,3)}\rightarrow Y$. Hence $E_3.L_3\leq 1$. Since $\Gamma_3$ is a smooth curve
	contained in $E_3$, $\Gamma_3$ meets $L_3$ at at most one point transversally. This will imply that
	$K_{Z_{(0,2)}}.\Gamma_2-K_{Z_{(0,3)}}.\Gamma_3\leq 1$, hence $K_{Z_{(0,3)}}.\Gamma_3\geq 0$. However it is impossible since
	$\Gamma_3\subset E_3$ is contracted by $Z_{(0,3)}\rightarrow Z_{(1)}$.\par
	Finally we need to check that the flop $Z_{(0,0)}\dashrightarrow Z_{(0,1)}$ is an Atiyah flop. First assume that $dep(Y)=3$ and we have
	$k_{(0)}=3$. Let $\Xi_i$ be the flipping curve of $Z_{(0,i)}\dashrightarrow Z_{(0,i+1)}$. One can see that the proper transform of
	$\Xi_2$ on $Z_{(0,1)}$ is not the flopped curve of $Z_{(0,0)}\dashrightarrow Z_{(0,1)}$ because the flopped curve do not pass through
	the $\frac{1}{2}(1,1,1)$ point. This implies that $\Xi_1$ and $\Xi_2$ both appear on $Z_{(0,0)}$. Let $F_{Z_{(0,0)}}=E_3$ be the exceptional
	divisor of $Z_{(0,0)}\rightarrow Z_{(0)}$, then we have $F_{Z_{(0,i)}}$ is generically smooth along $\Xi_i$. This implies that
	$F_{Z_{(0,1)}}$ is smooth in the smooth locus of $Z_{(0,1)}$ by applying Lemma \ref{ati1} twice. Hence $Z_{(0,0)}\dashrightarrow Z_{(0,1)}$
	is an Atiyah flop by Lemma \ref{ati2}.\par
	Now assume that $dep(Y)=2$ and we have $k_{(0)}=4$. As before let $\Xi_i$ be the flipping curve of $Z_{(0,i)}\dashrightarrow Z_{(0,i+1)}$.
	Then $\Xi_i$ appears on $Z_{(0,0)}$ for $i=1$, $3$ by the same reason as in the previous case. We are going to show that $\Xi_2$ also appears
	on $Z_{(0,0)}$, or equivalently, the proper transform of $\Xi_2$ on $Z_{(0,1)}$ is not the flopped curve of
	$Z_{(0,0)}\dashrightarrow Z_{(0,1)}$. Let $C_{Z_{(0,1)}}$ be the flopped curve of $Z_{(0,0)}\dashrightarrow Z_{(0,1)}$ and let $C_{Z_I}$
	be the proper transform of $C_{Z_{(0,1)}}$ on $Z_I$, for all possible $I$. We are going to show that $C_{Z_{(0,2)}}\neq\Xi_2$.
	Assume that $C_{Z_{(0,2)}}=\Xi_2$. In this case we have $C_{Z_{(0,1)}}$ intersects $\Xi_1$ non-trivially. We know that $K_{Z_I}.C_{Z_I}=0$
	for $I=(0,1)$, $(0,1,0)$, $(0,1,1)$, $(0,1,1,0)$ and $(0,1,1,1)$. However, $K_{Z_{(0,1,2)}}.C_{Z_{(0,1,2)}}<0$ since $C_{Z_{(0,1)}}$
	intersects $\Xi_1$ non-trivially. On the other hand, we have $Z_{(0,1,2)}=Z_{(0,2,0)}$ and the proper transform of $\Xi_2$ on $Z_{(0,2,0)}$
	is $K_{Z_{(0,2,0)}}$-trivial, since $Z_{(0,2,0)}\dashrightarrow Z_{(0,2,1)}$ is a flop. Thus $C_{Z_{(0,2)}}\neq \Xi_2$ and $F_{Z_{(0,1)}}$
	is smooth in the smooth locus of $Z_{(0,1)}$, which implies that $Z_{(0,0)}\dashrightarrow Z_{(0,1)}$ is an Atiyah flop

\end{proof}
\begin{pro}\label{e8flop}$ $
	Assume that $X\dashrightarrow X'$ is a simple smooth flop over $W$ such that $W$ has a $cE_8$ singularity. We have the factorization
	\[ X\leftarrow Y_{(1)}\leftarrow Y_{(0,2)}\dashleftarrow Y_{(0,1)}\dashleftarrow Y_{(0,0)} \rightarrow Y_{(0)}=Y\dashrightarrow Y'\]
	\[= Y'_{(0)}\leftarrow Y'_{(0,0)}\dashrightarrow Y'_{(0,1)}\dashrightarrow Y'_{(0,2)}\rightarrow Y'_{(1)}\rightarrow X' \]
	such that $Y_{(0,2)}\rightarrow Y_{(1)}\rightarrow X$ is a sequence of blowing-up smooth curves, $Y_{(0,0)}\dashrightarrow Y_{(0,1)}$ is
	an Atiyah flop. $Y_{(0,1)}\dashrightarrow Y_{(0,2)}$ is given by Lemma \ref{pago} (reps. Lemma \ref{3flip} (1)) if $dep(Y)=2$
	(resp. $dep(Y)=3$). $Y\dashrightarrow Y'$ is a flop given in Lemma \ref{ce2} and the diagram $Y'\dashrightarrow X'$ is symmetric.
\end{pro}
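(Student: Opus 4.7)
The strategy is to combine Lemma~\ref{ce2}, which already supplies a complete factorization of the middle flop $Y\dashrightarrow Y'$, with a careful application of the Chen--Hacon factorization (Theorem~\ref{chf}) to the flip $Y=Y_{(0)}\dashrightarrow Y_{(1)}$ provided by Corollary~\ref{ff}. Once the left half of the diagram $X\leftarrow\cdots\leftarrow Y$ is constructed, the right half $Y'\dashrightarrow\cdots\to X'$ follows by symmetry, since $X'\dashrightarrow X$ is also a simple smooth flop over the same $cE_8$ base.

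I will first pin down $Y_{(0,0)}\to Y_{(0)}$. By Lemma~\ref{ce2}, the unique singularity of $Y$ is a $cA/2$ point with explicit local equation $xy+z^2+u^n=0$ in $\A^4/\frac{1}{2}(1,1,1,0)$, where $n=dep(Y)\in\{2,3\}$. There is a unique $w$-morphism over such a point, namely the weighted blow-up of weight $\frac{1}{2}(1,1,1,0)$, and this must be $Y_{(0,0)}\to Y_{(0)}$. I then claim $Y_{(0,0)}\dashrightarrow Y_{(0,1)}$ is a flop rather than a flip: from Proposition~\ref{chfp} one has $dep(Y_{(1)})\le n-1$, so a flip at this stage would cause the depth to drop too rapidly to reach $Y_{(1)}$. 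Lemma~\ref{flopness}, together with $GorE(Y)=-\infty$, yields that this flop is smooth, and Lemma~\ref{ati} upgrades it to an Atiyah flop once we verify that $Y_{(0)}$ and $Y_{(1)}$ both carry only simple $cA/r$ singularities---the former directly from Lemma~\ref{ce2}, the latter by reading off the discrepancy table of Lemma~\ref{le8} and combining with a depth count.

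For $Y_{(0,1)}\dashrightarrow Y_{(0,2)}$, I will split on $dep(Y)$. If $dep(Y)=2$, then $Y_{(0,1)}$ carries only a $\tfrac{1}{2}(1,1,1)$ point, so Lemma~\ref{pago} supplies the factorization and $Y_{(0,2)}\to Y_{(1)}$ becomes the attendant blow-up of a smooth curve. If $dep(Y)=3$, then $Y_{(0,1)}$ inherits a $cA/2$ point of depth $2$---identified by comparing with the intermediate variety $Z_{(1)}$ in Lemma~\ref{ce2}---and Lemma~\ref{3flip} applies. The ``moreover'' clause of Lemma~\ref{3flip} rules out cases (2) and (3), forcing us into case~(1), precisely because the singular point is of $cA/2$ type. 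In either case $Y_{(0,2)}\to Y_{(1)}$ is the $w$-morphism extracting the remaining discrepancy-$\tfrac12$ exceptional divisor over $Y$.

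Finally, for $Y_{(1)}\to X$: since $X$ is smooth, $GorE(X/W)=-\infty$, and $dep(Y_{(1)})<dep(Y)$, the variety $Y_{(1)}$ must itself be smooth, and $Y_{(1)}\to X$ is a divisorial contraction to a curve. Corollary~\ref{lci} applied to the flip $Y_{(0)}\dashrightarrow Y_{(1)}$ shows that this curve is smooth, and the $GorE$ tracking places it in the smooth locus of $X$. The flop $Y\dashrightarrow Y'$ together with its detailed factorization is then supplied directly by Lemma~\ref{ce2}, and the right half of the diagram comes from symmetry. The main obstacle will be the case split for $Y_{(0,1)}\dashrightarrow Y_{(0,2)}$: one must verify explicitly that the Atiyah flop $Y_{(0,0)}\dashrightarrow Y_{(0,1)}$ transforms the local equation $xy+z^2+u^n=0$ in $\frac{1}{2}(1,1,1,0)$ in such a way that the resulting singularities and the centers of the $E_i$ from Lemma~\ref{ce2} really do match Lemma~\ref{pago} (when $n=2$) or Lemma~\ref{3flip}(1) (when $n=3$), and in particular that we do not accidentally fall into Lemma~\ref{3flip}(2) or (3).
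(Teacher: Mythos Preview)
Your overall strategy matches the paper's, but there is a genuine error in the $dep(Y)=3$ analysis. You claim that the ``moreover'' clause of Lemma~\ref{3flip} rules out cases (2) and (3) when the singularity is of type $cA/2$, forcing case~(1). That is not what the clause says: it asserts that a $cA/2$ singularity implies case~(1) \emph{or}~(3), ruling out only~(2). Deciding between (1) and (3) requires determining whether $Y_{(0,2)}$ is smooth or carries a $\tfrac12(1,1,1)$ point, and you never do this.

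In fact the paper's own proof concludes case~(3), not~(1), for $dep(Y)=3$ --- so the proposition's statement appears to contain a typo. The step you are missing is that $Y_{(0,2)}\to Y_{(1)}$ is the blow-up of a smooth curve rather than a $w$-morphism. The argument is: the exceptional divisor of $Y_{(0,0)}\to Y_{(0)}$ is $E_2$ in the notation of Lemma~\ref{le8}, and one checks from the discrepancy table there that $\cen_{Y_{(1)}}E_2$ is a curve; since this same divisor is what $Y_{(0,k_{(0)})}\to Y_{(1)}$ contracts, and $Y_{(1)}$ is smooth, the contraction is a smooth-curve blow-up. Hence $Y_{(0,2)}$ is smooth and Lemma~\ref{3flip}(3) applies. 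Your assertion that $Y_{(0,2)}\to Y_{(1)}$ extracts ``the remaining discrepancy-$\tfrac12$ exceptional divisor'' is therefore wrong.

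Two smaller remarks. First, for $dep(Y)=2$ the paper invokes Lemma~\ref{3flip}(3) directly on the flip $Y_{(0)}\dashrightarrow Y_{(1)}$, since $Y_{(0)}$ already has a depth-$2$ $cA/2$ point; your piecemeal reconstruction is redundant. Second, your depth-count argument for why $Y_{(0,0)}\dashrightarrow Y_{(0,1)}$ is a flop is too vague to stand; the paper instead uses the intersection-number argument from the last paragraph of the proof of Lemma~\ref{3flip}: a flipping curve through a single $cA/2$ point has $K_{Y_{(0)}}.C=-\tfrac12$, so after the $w$-morphism $K_{Y_{(0,0)}}.C_{(0,0)}=0$.
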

\begin{proof}
	By Lemma \ref{ce2} we know that $Y$ contains only a $cA/2$ point with depth $2$ or $3$. We use the notation as in Convention \ref{cn}. 
	Notice that there are only one exceptional divisor with discrepancy less than one over $Y=Y_{(0)}$, hence $k_{(0)}=1$. Thus
	$Y_{(1)}\rightarrow X$ is a blowing-up a smooth curve. If $dep(Y)=2$, then $Y_{(0)}\dashrightarrow Y_{(1)}$ is given by
	Lemma \ref{3flip} (3).\par
	Now assume that $dep(Y)=3$. Since $Y$ has only a $cA/2$ singular point, by the same argument as in the last paragraph of the proof of Lemma
	\ref{3flip} we can show that $Y_{(0,0)}\dashrightarrow Y_{(0,1)}$ is a flop. It is a smooth flop by Lemma \ref{flopness}. Now
	$dep(Y_{(0,1)})=2$ and $Y_{(0,1)}$ contains only a $cA/2$ point. Since $exc(Y_{(0,0)}\rightarrow Y_{(0)})=E_2$ and $\cen_{Y_{(1)}}E_2$
	is a curve, $Y_{(0,2)}\rightarrow Y_{(1)}$ is a blowing-up a smooth curve. Hence $Y_{(0,2)}$ is smooth and
	$Y_{(0,1)}\dashrightarrow Y_{(0,2)}$ is given by Lemma \ref{3flip} (3). Finally $Y_{(0,0)}\dashrightarrow Y_{(0,1)}$ is an Atiyah flop
	by Lemma \ref{ati}.
\end{proof}
\begin{proof}[Proof of Theorem \ref{ssthm}]
	The statement (1) is well-known. (2) follows from Proposition \ref{cd} and (3) follows from Proposition \ref{ce6}, Proposition \ref{ce7}
	and Proposition \ref{e8flop}.
\end{proof}

\end{document}